\DeclareMathAlphabet\mathbfcal{OMS}{cmsy}{b}{n}
\numberwithin{equation}{section}
\newtheorem{theorem}{Theorem}[section]
\newtheorem{definition}[theorem]{Definition}
\newtheorem{lemma}[theorem]{Lemma}
\newtheorem{proposition}[theorem]{Proposition}
\newtheorem{corollary}[theorem]{Corollary}
\newtheorem{remark}[theorem]{Remark}
\newcommand{\PP}{{\mathbb P}} \newcommand{\RR}{\mathbb{R}}
\newcommand{\QQ}{\mathbb{Q}} \newcommand{\CC}{{\mathbb C}}
\newcommand{\ZZ}{{\mathbb Z}} \newcommand{\NN}{{\mathbb N}}
\newcommand{\cF}{\mathcal{F}}
\newcommand{\cE}{\mathcal{E}}
\newcommand{\cH}{\mathcal{H}}
\newcommand{\cO}{\mathcal{O}}
\newcommand{\fa}{\mathfrak{a}}
\newcommand{\fb}{\mathfrak{b}}
\newcommand{\vol}{\operatorname{vol}}
\newcommand{\ord}{\mathrm{ord}}
\newcommand{\lct}{\mathrm{lct}}
\newcommand{\Val}{\mathrm{Val}}
\title{Valuative invariants with higher moments}
\begin{document}

\author[K. Zhang]{Kewei Zhang}
\address{Laboratory of Mathematics and Complex Systems, School of Mathematical Sciences, Beijing Normal University, Beijing, 100875, China.}
\email{kwzhang@pku.edu.cn}

\maketitle

\begin{abstract}
In this article we introduce a family of valuative invariants defined in terms of the $p$-th moment of the expected vanishing order. These invariants lie between $\alpha$ and $\delta$-invariants. They vary continuously in the big cone and semi-continuously in families. Most importantly, they give sufficient conditions for K-stability of Fano varieties, which generalizes the $\alpha$ and $\delta$-criterions in the literature. They are also related to the $d_p$-geometry of maximal geodesic rays.
\end{abstract}

\tableofcontents

\section{Introduction}

\subsection{Background}
Valuative invariants play significant roles in finding canonical metrics on polarized varieties. A highly notable one is the $\alpha$-invariant that goes back to Tian \cite{Tian87}, with the help of which there is now a huge table of K\"ahler--Einstein Fano varieties that have been discovered by various authors. Another remarkable invariant is the $\delta$-invariant that was recently introduced by Fujita-Odaka \cite{FO18}, which turns out to be a very powerful new tool in the study of K-stability of Fano varieties and now there is a large literature on it; see especially the work of Blum--Jonsson 
\cite{BJ17} and the references therein. The purpose of this article is to further polish the pertinent field by introducing a family of valuative invariants that interpolates betweent $\alpha$ and $\delta$. As we shall see, these invariants enjoy many properties that were previously only established for $\alpha$ and $\delta$. Our work suggests that the study of valuative invariants can be carried out in a broader context and hopefully this will serve as a new perspective in future research.

To begin with, we first recall the definition of $\delta$. Let $X$ be an $n$-dimensional complex normal projective variety with at worst klt singularities and $L$ is an ample line bundle. Up to a multiple of $L$, we assume throughout that $H^0(X,mL)\neq0$
for any $m\in\ZZ_{>0}$. Put
$$
d_m:=h^0(X,mL),\ m\in\NN.
$$
Consider a basis $s_1,\cdots, s_{d_m}$ of the vector space $H^0(X,mL)$,
which induces an effective $\mathbb{Q}$-divisor
$$
D:=\frac{1}{md_m}\sum_{i=1}^{d_m}\big\{s_i=0\big\}\sim_{\mathbb{Q}}L.
$$
Any $\mathbb{Q}$-divisor $D$ obtained in this way is called an $m$-basis type divisor of $L$. Let
$$
\delta_m\big(L\big):=\inf\bigg\{\lct(X,D)\bigg|D\text{ is $m$-basis type of }L\bigg\}.
$$
Then let
$$
\delta(L)=\limsup_{m\rightarrow\infty}\delta_m(L).
$$
This limsup is in fact a limit by \cite{BJ17}.
So roughly speaking, $\delta(X,L)$ measures the singularities of basis type divisors of $L$.


The following result demonstrates the importance of the $\delta$-invariant.
\begin{theorem}[\cite{FO18,BJ17}]
\label{theorem:delta}
Let $X$ be a $\QQ$-Fano variety.
The following assertions hold:
\begin{enumerate}
\item $X$ is $K$-semistable if and only if $\delta(-K_X)\geqslant1$;
\item $X$ is uniformly $K$-stable if and only if $\delta(-K_X)>1$.
\end{enumerate}
\end{theorem}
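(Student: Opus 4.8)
The plan is to deduce the theorem from a valuative reformulation of $\delta$ combined with the valuative criterion for K-stability. The first step is to rewrite $\delta_m$ as an infimum over valuations. Using the valuative formula for the log canonical threshold, $\lct(X,D)=\inf_v A_X(v)/v(D)$ with the infimum over all divisorial valuations $v$ on $X$, one exchanges the two infima in the definition of $\delta_m$ to obtain
$$
\delta_m(-K_X)=\inf_v\frac{A_X(v)}{S_m(v)},\qquad S_m(v):=\sup_D v(D),
$$
the supremum being over $m$-basis type divisors $D$ of $-K_X$. The point is that $\sup_D v(D)$ is attained by a basis of $H^0(X,-mK_X)$ compatible with the decreasing filtration $\mathcal{F}^j_v:=\{\,s:v(s)\geq j\,\}$, which yields the closed formula $S_m(v)=\frac{1}{md_m}\sum_{j\geq1}\dim\mathcal{F}^j_v H^0(X,-mK_X)$.

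The second step is to let $m\to\infty$. Invoking \cite{BJ17}, $S_m(v)\to S(v)=\frac{1}{\vol(-K_X)}\int_0^\infty \vol(-K_X-tv)\,dt$, and this convergence is uniform once $v$ is suitably normalized; this uniformity (proved via Newton--Okounkov bodies together with Fujita's positivity estimates bounding $S(v)$ by the pseudoeffective threshold of $v$) is exactly what legitimizes interchanging $\lim_m$ with $\inf_v$. Hence
$$
\delta(-K_X)=\lim_{m\to\infty}\delta_m(-K_X)=\inf_v\frac{A_X(v)}{S(v)},
$$
with $v$ ranging over divisorial valuations, equivalently over prime divisors $E$ over $X$, writing $S(E):=S(\ord_E)$.

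The third step is to invoke the Fujita--Li valuative criterion. For a $\QQ$-Fano variety, K-semistability is equivalent to $\beta_X(E):=A_X(E)-S(E)\geq0$ for every prime divisor $E$ over $X$; since this says precisely that $A_X(E)/S(E)\geq1$ for all $E$, it is equivalent to $\delta(-K_X)\geq1$ by the formula above, which is $(1)$. For $(2)$, the results of Fujita, Boucksom--Hisamoto--Jonsson and \cite{BJ17} identify uniform K-stability with the existence of $\epsilon>0$ such that $A_X(E)\geq(1+\epsilon)S(E)$ for all $E$, i.e.\ $\inf_E A_X(E)/S(E)>1$, which is exactly $\delta(-K_X)>1$.

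The main obstacle is the second step: proving $S_m(v)\to S(v)$ with enough uniformity in $v$ to swap the limit and the infimum. This is the technical core of \cite{BJ17}; it rests on the Newton--Okounkov body description of the filtered section rings together with positivity estimates controlling $S(v)$, and it is what upgrades the elementary identity of step one into the full statement. Steps one and three are, by comparison, a formal manipulation and an appeal to the valuative criterion, respectively.
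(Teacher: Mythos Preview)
The paper does not prove this theorem at all: it is stated as a known result and attributed to \cite{FO18,BJ17}, serving as background for the new invariants $\delta^{(p)}$. There is therefore no ``paper's own proof'' to compare against.

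That said, your sketch is a faithful outline of the argument in the cited references. Step one (the Fujita--Odaka lemma rewriting $\delta_m$ as $\inf_v A_X(v)/S_m(v)$) and step three (the Fujita--Li valuative criterion) are exactly as you describe. Step two is indeed the substantive point, and your identification of the key difficulty---the uniformity in $v$ needed to commute $\lim_m$ with $\inf_v$---is correct; this is precisely what \cite{BJ17} establishes via Okounkov bodies and the estimate $S_m(v)\le(1+\varepsilon)S(v)$ for $m\gg0$ uniformly over linearly bounded filtrations. One small refinement: for part (2) you should also note that the infimum $\inf_E A_X(E)/S(E)$ being strictly greater than $1$ genuinely requires the uniform version of the valuative criterion (equivalently, that the constant $\epsilon$ in $A_X(E)\ge(1+\epsilon)S(E)$ can be chosen independently of $E$), which is again where the uniform convergence is doing work. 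Your proposal correctly flags this but could be slightly more explicit that the same uniformity is used in both directions of (2), not just in passing from $\delta_m$ to $\delta$.
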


Thus by \cite{BBJ18,LTW19}, $\delta$-invariant serves as a criterion for the existence of KE metrics on $\QQ$-Fano varieties.

\subsection{Valuative characterization}

Let $\pi:Y\rightarrow X$ be a proper birational morphism from a normal variety $Y$ to $X$
and let $F\subset Y$ be a prime divisor $F$ in $Y$.
Such an $F$ will be called a divisor \emph{over} $X$. Let
$$
S_m(L,F):=\frac{1}{md_m}\sum_{j=1}^{\tau_m(L,F)}\dim H^0(Y,m\pi^* L-jF)
$$
denote the \emph{expected vanishing order of $L$ along $F$ at level $m$}.
Here 
$$
\tau_m(L,F):=\sup_{0\neq s\in H^0(X,mL)}\ord_F(s)
$$
denotes the \emph{pseudo-effective threshold of $L$ along $F$ at level $m$}.
Then a basic but important linear algebra lemma due to Fujita--Odaka \cite{FO18} says that
$$
S_m(L,F)=\sup\bigg\{\ord_F(D)\,\bigg|\,m\text{-basis divisor $D$ of }L\bigg\},
$$
and this supremum is attained by any $m$-basis divisor $D$ arising from a basis $\{s_i\}$ that is \emph{compatible} with the filtration
$$
H^0\big(Y,m\pi^*L\big)\supset H^0\big(Y,m\pi^*L-F\big)\supset\cdots\supset H^0\big(Y,m\pi^*L-(\tau_m(L,F)+1)F\big)=\{0\},
$$
meaning that each $H^0(Y,m\pi^* L-jF)$
is spanned by a subset of the $\{s_i\}_{i=1}^{d_m}$. 
Then it is easy to deduce that
$$
	\delta_m(L)=\inf_{F}\frac{A_X(F)}{S_m(L,F)}.
$$

As $m\rightarrow\infty$, one has
$$
S(L,F):=\lim_{m\rightarrow\infty}S_m(L,F)=\frac{1}{\vol(L)}\int_0^{\tau(L,F)}\vol(\pi^*L-xF)dx,
$$
which is called the \emph{expected vanishing order of $L$ along $F$}. 
Then Blum--Jonsson \cite{BJ17} further show that the limit of $\delta_m(X,L)$ also exists, and is equal to
$$
\delta(L)=\inf_{F}\frac{A_X(F)}{S(L,F)}.
$$
Another closely related valuative invariant is Tian's $\alpha$-invariant \cite{Tian87}, which can be defined as (cf. \cite[Appendix]{CS08} and \cite[Theorem C]{BJ17})
$$
\alpha(L):=\inf_F\frac{A_X(F)}{\tau(L,F)},
$$
where $\tau(L,F):=\lim\tau_m(L,F)/m$, the \emph{pseudo-effective threshold of $L$ along $F$}.

An important property of $S(L,F)$ is illustrated by the following result of K. Fujita, who shows that $S(L,F)$ can be viewed as the coordinate of the barycenter of certain Newton--Okounkov body along the ``$F$-axis'', and hence the well-known Brunn--Minkovski inequality in convex geometry implies the following estimate.

\begin{proposition}[Barycenter inequality in \cite{Fuj19b}]
\label{prop:Fujita-barycenter-ineq}
For any $F$ over $X$, one has
$$
\frac{\tau(L,F)}{n+1}\leq S(L,F)\leq\frac{n\tau(L,F)}{n+1}.
$$
\end{proposition}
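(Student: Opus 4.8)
\emph{Proof proposal.} The plan is to exhibit $S(L,F)$ as the first barycentric coordinate of a Newton--Okounkov body of $\pi^{*}L$ and then to apply the Brunn--Minkowski inequality.

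Recall that $F$ lives on some birational model $\pi\colon Y\to X$; after further blowing up we may assume $Y$ and $F$ are smooth, which changes none of $\vol(L)$, $S(L,F)$, $\tau(L,F)$. Fix an admissible flag $Y_{\bullet}$ on $Y$ with $Y_{1}=F$. Since $\pi^{*}L$ is big, this data produces the Newton--Okounkov body
$$
\Delta:=\Delta_{Y_{\bullet}}(\pi^{*}L)\subset\RR_{\geq0}^{n},\qquad \vol_{n}(\Delta)=\tfrac1{n!}\vol(\pi^{*}L)=\tfrac1{n!}\vol(L),
$$
a convex body whose first coordinate function $x_{1}$ records the order of vanishing $\ord_{F}$ on sections.

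The heart of the matter is the slicing dictionary. By the Lazarsfeld--Mustata slicing theorem, the fibre $\Delta\cap\{x_{1}=t\}$ has $(n-1)$-dimensional volume $\tfrac1{(n-1)!}\vol_{F\mid Y}(\pi^{*}L-tF)$, the restricted volume of $\pi^{*}L-tF$ along $F$; combined with the differentiability of the volume function, $\tfrac{d}{dt}\vol(\pi^{*}L-tF)=-n\,\vol_{F\mid Y}(\pi^{*}L-tF)$, this yields $\vol_{n-1}(\Delta\cap\{x_{1}=t\})=-\tfrac1{n!}\tfrac{d}{dt}\vol(\pi^{*}L-tF)$ for $t\in[0,\tau(L,F)]$. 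Integrating from $t$ to $\tau(L,F)$ and using $\vol(\pi^{*}L-\tau(L,F)F)=0$, we find that the image of $\Delta$ under $x_{1}$ is the interval $[0,\tau(L,F)]$ and that the pushforward of normalized Lebesgue measure on $\Delta$ by $x_{1}$ is the probability measure $\mu$ on $[0,\tau(L,F)]$ with $\mu\big((t,\infty)\big)=\vol(\pi^{*}L-tF)/\vol(L)$. Hence, by the layer-cake formula $\int t\,d\mu=\int_{0}^{\infty}\mu((t,\infty))\,dt$ together with the integral expression for $S(L,F)$ recalled above,
$$
S(L,F)=\frac1{\vol(L)}\int_{0}^{\tau(L,F)}\vol(\pi^{*}L-tF)\,dt=\int t\,d\mu(t)=\frac1{\vol_{n}(\Delta)}\int_{\Delta}x_{1}\,dx,
$$
so that $S(L,F)$ is the first coordinate of the barycenter of $\Delta$, while the projection of $\Delta$ onto the $x_{1}$-axis (the ``$F$-axis'') is the segment $[0,\tau(L,F)]$.

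It remains to prove the purely convex-geometric claim: if $K\subset\RR^{n}$ is a convex body whose projection to the $x_{1}$-axis is $[0,M]$, then its barycenter $b$ obeys $\tfrac{M}{n+1}\leq b_{1}\leq\tfrac{nM}{n+1}$. By Brunn--Minkowski the function $\phi(t):=\vol_{n-1}(K\cap\{x_{1}=t\})^{1/(n-1)}$ is nonnegative and concave on $[0,M]$, positive on the interior, and $b_{1}=\int_{0}^{M}t\,\phi(t)^{n-1}\,dt\,/\int_{0}^{M}\phi(t)^{n-1}\,dt$; so it suffices to show that for every nonnegative concave $\phi$ on $[0,M]$ one has $\tfrac{M}{n+1}\leq\int_{0}^{M}t\,\phi(t)^{n-1}\,dt\,/\int_{0}^{M}\phi(t)^{n-1}\,dt\leq\tfrac{nM}{n+1}$. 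This is elementary: concavity forces the ratio to be largest for the increasing linear profile $\phi(t)=t$, which gives $\tfrac{nM}{n+1}$, and smallest for the decreasing one $\phi(t)=M-t$, which gives $\tfrac{M}{n+1}$ (geometrically, the extremal $K$ is a cone with apex on one of the two bounding hyperplanes). Taking $K=\Delta$ and $M=\tau(L,F)$ finishes the proof.

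The step I expect to be the main obstacle is the slicing dictionary: identifying the distribution of $\ord_{F}$ on sections of $\pi^{*}L$ --- equivalently the $x_{1}$-marginal of $\Delta$ --- with the function $t\mapsto\vol(\pi^{*}L-tF)$, and in particular confirming that the $x_{1}$-extent of $\Delta$ is exactly $[0,\tau(L,F)]$ and not some shorter interval. This is where one genuinely uses the Lazarsfeld--Mustata / Boucksom--Chen theory of (filtered) Newton--Okounkov bodies together with the differentiability of the volume function; it is precisely Fujita's observation \cite{Fuj19b} that renders $S(L,F)$ visible as a barycenter coordinate. The remaining ingredients --- the normalization $\vol_{n}(\Delta)=\vol(L)/n!$, the measure-theoretic bookkeeping, and the one-variable convexity inequality --- are routine.
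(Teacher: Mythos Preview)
Your proposal is correct and follows the same overall strategy as the paper (which proves the $p=1$ case of its Theorem~\ref{thm:p-barycenter-ineq}, itself modeled on Fujita's original argument): interpret $S(L,F)$ as the first barycenter coordinate of a Newton--Okounkov body, invoke Brunn--Minkowski to obtain a concave slice profile, and reduce to a one-variable inequality. Two small differences are worth noting. For the \emph{lower} bound the paper does not pass through slices at all; it uses directly that $x\mapsto\vol(L-xF)^{1/n}$ is concave (from \cite{LM09}), giving $\vol(L-xF)\geq\vol(L)\,(1-x/\tau)^{n}$ and hence $S(L,F)\geq\tau/(n+1)$ by a single integration---this is marginally cleaner than your route via the slice function $\phi$. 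For the \emph{upper} bound the paper carries out explicitly the step you label ``elementary'': writing $b$ for the barycenter and comparing the concave $\phi$ with the secant line $t\mapsto\phi(b)\,t/b$ through the origin, one gets $0=\int_{0}^{\tau}(t-b)\phi^{n-1}\,dt\leq\int_{0}^{\tau}(t-b)\big(\phi(b)t/b\big)^{n-1}\,dt$, which after evaluating the right-hand side yields $b\leq n\tau/(n+1)$. This is exactly your claim that the extremal profile is $\phi(t)=t$, but with the comparison written down; you may want to include that one line rather than assert it.
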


One should think of $\tau$ and $S$ as the non-Archimedean analogues of the $I$ and $I-J$ functionals of Aubin, and it is shown by Boucksom--Jonsson that the above result holds for general valuations as well (cf. \cite[Theorem 5.13]{BoJ18}).  
An immediate consequence of Proposition \ref{prop:Fujita-barycenter-ineq} is the following:
\begin{equation}
\label{eq:alpha=delta}
    \frac{n+1}{n}\alpha(L)\leq\delta(L)\leq(n+1)\alpha(L).
\end{equation}


\subsection{Valuative invariants with higher moments, and the main results}
$S(L,F)$ can be treated as the \emph{first moment} of the vanishing order of $L$ along $F$. In general for $p\geq1$, one can also consider the $p$-th moment of the vanishing order of $L$ along $F$. More precisely, given a basis $\{s_i\}$ of $H^0(X,mL)$ that is compatible with the filtration induced by $F$, put
$$
S^{(p)}_m(L,F):=\frac{1}{d_m}\sum_{i=1}^{d_m}\bigg(\frac{\ord_F(s_i)}{m}\bigg)^p.
$$
In the K-stability literature, related $L^p$ notions (for test configurations) have been introduced and studied by Donaldson \cite{Don05}, Dervan \cite{Der18}, Hismoto \cite{His16} et. al.
Our formulation of $S^{(p)}$ is inspired by the recent work of Han-Li \cite{HL20}, where a sequence of Monge--Amp\`ere energies with higher moments was considered.
We will give a more formal definition of this in Section \ref{sec:S-p}.
As $m\rightarrow\infty$, one has (see Lemma \ref{lem:S-p-formula})
$$
S^{(p)}(L,F):=\frac{1}{\vol(L)}\int_0^{\tau(L,F)}px^{p-1}\vol(L-xF)dx.
$$
So in particular
$
S(L,F)=S^{(1)}(L,F).
$
One main point of this article is to show that most properties established for $S$ in the literature hold for $S^{(p)}$ as well; see Section \ref{sec:S-p} for more details.

Extending Fujita's barycenter inequality to the $p$-th moment, we have

\begin{theorem}
\label{thm:p-barycenter-ineq}
Given any divisor $F$ over $X$, one has
$$
\frac{\Gamma(p+1)\Gamma(n+1)}{\Gamma(p+n+1)}\tau(L,F)^p\leq S^{(p)}(L,F)\leq\frac{n}{n+p}\tau(L,F)^p.
$$
Here $\Gamma(\cdot)$ is the gamma function.
\end{theorem}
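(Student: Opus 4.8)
The plan is to reduce the statement to a convex‑geometry fact about a Newton--Okounkov body, in exactly the way Fujita proves Proposition~\ref{prop:Fujita-barycenter-ineq}, and then to run the same chord estimate with the weight $y_1^p$ in place of $y_1$. First I would use Lemma~\ref{lem:S-p-formula} to write $S^{(p)}(L,F)=\frac1V\int_0^{\tau}p\,x^{p-1}G(x)\,dx$, where $V:=\vol(L)$, $G(x):=\vol(\pi^*L-xF)$ and $\tau:=\tau(L,F)$, so that it suffices to sandwich $G$ between two explicit functions of $x$ and integrate. To obtain those bounds I would choose an admissible flag $Y_\bullet$ on $Y$ with $Y_1=F$ and pass to the associated Newton--Okounkov body $\Delta\subset\RR^n$ of $\pi^*L$: it is a convex body whose first coordinate is $\ord_F$, so that $V=n!\,\vol(\Delta)$, $G(x)=n!\,\vol\bigl(\Delta\cap\{y_1\ge x\}\bigr)$, $\max_{y\in\Delta}y_1=\tau$, and $\min_{y\in\Delta}y_1=0$ (the last because $\ord_F$ vanishes on a general section of $mL$ for $m\gg 0$).

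The two ingredients I would extract are the following. Put $M(x):=\vol\bigl(\Delta\cap\{y_1\ge x\}\bigr)$ and $\widetilde M(x):=\vol(\Delta)-M(x)=\vol\bigl(\Delta\cap\{y_1\le x\}\bigr)$; because $\min_\Delta y_1=0$ and $\max_\Delta y_1=\tau$, both $\Delta\cap\{y_1\ge x\}$ and $\Delta\cap\{y_1\le x\}$ are nonempty convex bodies for every $x\in[0,\tau]$. From the inclusion
$$
\lambda\bigl(\Delta\cap\{y_1\ge x_0\}\bigr)+(1-\lambda)\bigl(\Delta\cap\{y_1\ge x_1\}\bigr)\ \subseteq\ \Delta\cap\{y_1\ge\lambda x_0+(1-\lambda)x_1\}
$$
and the analogous one with $\le$ in place of $\ge$, the Brunn--Minkowski inequality shows that $M^{1/n}$ and $\widetilde M^{1/n}$ are concave on $[0,\tau]$. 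Since $M(0)=\vol(\Delta)$, $M(\tau)=0$, $\widetilde M(0)=0$ and $\widetilde M(\tau)=\vol(\Delta)$, comparing each of these concave functions with the chord joining its two endpoints gives $M(x)^{1/n}\ge(1-x/\tau)\,\vol(\Delta)^{1/n}$ and $\widetilde M(x)^{1/n}\ge(x/\tau)\,\vol(\Delta)^{1/n}$, that is,
$$
V\bigl(1-\tfrac{x}{\tau}\bigr)^{n}\ \le\ G(x)\ \le\ V\bigl(1-(\tfrac{x}{\tau})^{n}\bigr),\qquad x\in[0,\tau].
$$

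Feeding these two bounds into $S^{(p)}(L,F)=\frac1V\int_0^{\tau}p\,x^{p-1}G(x)\,dx$ and substituting $x=\tau t$ yields
$$
\tau^p\int_0^1 p\,t^{p-1}(1-t)^n\,dt\ \le\ S^{(p)}(L,F)\ \le\ \tau^p\int_0^1 p\,t^{p-1}\bigl(1-t^n\bigr)\,dt,
$$
and it only remains to evaluate the two integrals: the upper one is $1-\tfrac{p}{n+p}=\tfrac{n}{n+p}$, and the lower one is the beta integral $\tfrac{\Gamma(p+1)\Gamma(n+1)}{\Gamma(p+n+1)}$. This is exactly the asserted inequality. (When $n=1$ the body $\Delta$ is a segment, $G$ is affine, the two displayed bounds on $G$ are equalities, and one gets $S^{(p)}(L,F)=\tau^p/(p+1)$, in agreement with both sides.)

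The only point I expect to require care is the second concavity fact. The lower bound uses only the familiar $\tfrac1n$-concavity of $x\mapsto\vol(\pi^*L-xF)$, whereas the upper bound needs the complementary concavity of $x\mapsto\bigl(\vol(L)-\vol(\pi^*L-xF)\bigr)^{1/n}$, which would fail if the $y_1$-range of $\Delta$ did not reach $0$; so the input $\min_\Delta y_1=0$ must be stated and used explicitly. Everything else — the two chord estimates and the evaluation of the two beta integrals — is routine.
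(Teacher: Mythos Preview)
Your proof is correct. The lower bound argument is essentially identical to the paper's: both use the $\tfrac1n$-concavity of $x\mapsto\vol(\pi^*L-xF)$ and the chord comparison, then evaluate the beta integral.

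For the upper bound, however, your route is genuinely different from the paper's. You obtain a \emph{pointwise} upper bound $G(x)\le V\bigl(1-(x/\tau)^n\bigr)$ by a second application of Brunn--Minkowski to the complementary slices $\Delta\cap\{y_1\le x\}$, relying crucially on $\min_\Delta y_1=0$; then you integrate. The paper instead follows Fujita's approach: it introduces a non-negative concave function $f$ on $[0,\tau]$ with $f^{n-1}(x)\,dx=d\bigl(-\vol(L-xF)/V\bigr)$, writes $S^{(p)}=\int_0^\tau x^p f^{n-1}(x)\,dx$, and compares $f$ with the linear function $x\mapsto \tfrac{f(b^{1/p})}{b^{1/p}}\,x$ (where $b=S^{(p)}$) on either side of $x=b^{1/p}$; this yields the bound without any pointwise control on $G$. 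Your argument is shorter and more symmetric with the lower bound. The paper's argument, on the other hand, sets up the concave density $f$ that is reused in the proof of Proposition~\ref{prop:H-p-increase} (the monotonicity of $\bigl(\tfrac{n+p}{n}S^{(p)}\bigr)^{1/p}$), which is the key new ingredient for Theorems~\ref{thm:delta-p=>KE} and~\ref{thm:delta-p-equality}; so there is a structural payoff to the longer route.
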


Letting $p\rightarrow\infty$, one obtains the following
\begin{corollary}
\label{cor:tau=lim-S-p}
One has
$$
\tau(L,F)=\lim_{p\rightarrow\infty}S^{(p)}(L,F)^{1/p}.
$$
\end{corollary}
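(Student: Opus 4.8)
The plan is to deduce this directly from Theorem \ref{thm:p-barycenter-ineq} by a squeeze argument, so essentially no new geometric input is required. First I would take $p$-th roots of the two-sided estimate in Theorem \ref{thm:p-barycenter-ineq}, which gives, for every $p\geq1$,
$$
\left(\frac{\Gamma(p+1)\Gamma(n+1)}{\Gamma(p+n+1)}\right)^{1/p}\tau(L,F)\ \leq\ S^{(p)}(L,F)^{1/p}\ \leq\ \left(\frac{n}{n+p}\right)^{1/p}\tau(L,F).
$$
(If $\tau(L,F)=0$ then $S^{(p)}(L,F)=0$ and there is nothing to prove, so we may assume $\tau(L,F)>0$.) It then remains to verify that both the left- and the right-hand constants converge to $1$ as $p\to\infty$, which forces $S^{(p)}(L,F)^{1/p}\to\tau(L,F)$.

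For the right-hand constant, I would write $\left(\frac{n}{n+p}\right)^{1/p}=\exp\!\left(\frac{1}{p}\log\frac{n}{n+p}\right)$ and note that $\frac{1}{p}\bigl(\log n-\log(n+p)\bigr)\to0$, so this constant tends to $1$; this already yields $\limsup_{p\to\infty}S^{(p)}(L,F)^{1/p}\leq\tau(L,F)$. For the left-hand constant, I would invoke the classical asymptotics of the Gamma function: $\Gamma(p+n+1)/\Gamma(p+1)\sim p^{n}$ as $p\to\infty$ (from Stirling's formula, or, when $n$ is an integer, from $\Gamma(p+n+1)=(p+n)(p+n-1)\cdots(p+1)\Gamma(p+1)$), whence
$$
\left(\frac{\Gamma(p+1)\Gamma(n+1)}{\Gamma(p+n+1)}\right)^{1/p}=\exp\!\left(\frac{1}{p}\bigl(\log\Gamma(n+1)-n\log p+o(1)\bigr)\right)\longrightarrow1.
$$
This gives $\liminf_{p\to\infty}S^{(p)}(L,F)^{1/p}\geq\tau(L,F)$, and combining the two inequalities completes the proof.

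The argument is entirely elementary once Theorem \ref{thm:p-barycenter-ineq} is available; the only mildly technical ingredient is the Gamma-function asymptotic for the lower constant, and in fact this can be replaced by the crude bound $\Gamma(p+1)/\Gamma(p+n+1)\geq c_n\,p^{-n}$ for a constant $c_n>0$ depending only on $n$, which still suffices to conclude since $c_n^{1/p}p^{-n/p}\to1$. Thus I do not expect any genuine obstacle here beyond carefully recording these limits.
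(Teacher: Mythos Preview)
Your proposal is correct and matches the paper's approach exactly: the corollary is stated immediately after Theorem~\ref{thm:p-barycenter-ineq} with the remark ``Letting $p\rightarrow\infty$, one obtains the following,'' and no further proof is given. You have simply spelled out the elementary limit computation that the paper leaves implicit.
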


Relating to the $d_p$-geometry of maximal geodesic rays, $S^{(p)}(L,F)$ has the following pluri-potential interpretation (in Section \ref{sec:test-curve} we will prove a more general result that holds for linearly bounded filtrations). The proof relies on the main result in \cite{His16} and some non-Archimedean ingredients from \cite{BHJ17,BoJ18,BoJ18a}.

\begin{theorem}
\label{thm:S-p=d_p/t}
Let $\varphi_t^F$ be the maximal geodesic ray induced by $F$ (see Section \ref{sec:test-curve} for the setup and definition), then
$$
S^{(p)}(L,F)^{1/p}=\frac{d_p(0,\varphi^F_t)}{t}\text{ for all }t>0,
$$
where $d_p$ denotes the Finsler metric introduced by Darvas (see \eqref{eq:def-d-p}).
\end{theorem}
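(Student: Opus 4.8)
The plan is to identify $S^{(p)}(L,F)^{1/p}$ with the $d_p$-length of the maximal geodesic ray $\varphi_t^F$ by interpreting both sides through the same non-Archimedean / convex-geometric data, namely the ``test curve'' (or Legendre-dual concave function) attached to the filtration by $F$. First I would recall the setup: the divisorial valuation $\ord_F$ induces a linearly bounded multiplicative filtration on the section ring $R(X,L)$, and by \cite{BHJ17, BoJ18a} this filtration determines a maximal geodesic ray $(\varphi_t^F)_{t\geq 0}$ emanating from a fixed reference potential $\varphi_0$; the ``test curve'' $t\mapsto \hat\varphi_\tau$ encodes the successive-minima data, whose distribution function is precisely the Duistermaat–Heckman measure $\mathrm{DH}_F$ of the filtration. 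The key identity from \cite{BoJ18} (or \cite{His16}) is that $\mathrm{DH}_F$ is the pushforward under $x\mapsto x$ of the measure $-\,d\!\left(\vol(\pi^*L-xF)/\vol(L)\right)$ on $[0,\tau(L,F)]$; equivalently, $\int_{\RR} f(x)\,d\mathrm{DH}_F(x) = \frac{1}{\vol(L)}\int_0^{\tau} f(x)\,\big(-d\vol(\pi^*L-xF)\big)$ for continuous $f$.

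Next I would record the $d_p$-formula of Darvas in non-Archimedean terms: along a geodesic ray associated to a filtration, Darvas's Finsler distance $d_p(0,\varphi_t^F)$ (see \eqref{eq:def-d-p}) is computed by the $L^p$-norm of the ``velocity'', which in this rank-one situation equals $t\cdot\left(\int_{\RR}|x|^p\,d\mathrm{DH}_F(x)\right)^{1/p}$ after centering — but since $\ord_F\geq 0$ and we are measuring distance from the reference (corresponding to $x=0$, the trivial filtration level), no centering is needed and one gets $d_p(0,\varphi_t^F) = t\left(\int_0^{\tau} x^p\,d\mathrm{DH}_F(x)\right)^{1/p}$. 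Combining with the previous paragraph, $\int_0^{\tau} x^p\,d\mathrm{DH}_F(x) = \frac{1}{\vol(L)}\int_0^{\tau} x^p\,\big(-d\vol(\pi^*L-xF)\big)$, and integrating by parts (using $\vol(\pi^*L-\tau F)=0$ and the boundary term at $0$ giving nothing after differentiation) yields $\frac{1}{\vol(L)}\int_0^\tau p x^{p-1}\vol(\pi^*L-xF)\,dx = S^{(p)}(L,F)$ by Lemma \ref{lem:S-p-formula}. Dividing by $t$ gives the claim. The linearity in $t$ is automatic because the geodesic ray is parametrized so that its $d_p$-speed is constant, which is exactly the content of the maximal geodesic ray construction.

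The main obstacle I expect is the pluri-potential bookkeeping needed to justify that the $d_p$-length of $\varphi_t^F$ really is governed by the Duistermaat–Heckman measure of the $\ord_F$-filtration — i.e., identifying the "velocity measure" of the maximal geodesic ray with $\mathrm{DH}_F$. This is where the input from \cite{His16} enters: one needs that for a (divisorial, hence linearly bounded) filtration the associated maximal geodesic ray has Monge–Ampère energy and higher $L^p$-energies computed by the filtration's spectral measure, and that $d_p$ along such rays reduces to the $L^p$-norm against that measure. I would handle this by first proving the more general statement for arbitrary linearly bounded filtrations announced for Section \ref{sec:test-curve} — working with the test-curve / Legendre transform picture of \cite{BoJ18a} where $d_p$ becomes an explicit $L^p$-norm of the derivative of the concave test curve — and then specializing to the rank-one filtration induced by $F$, at which point the test curve is the linear-by-parts function whose derivative's distribution is $\mathrm{DH}_F$. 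A secondary, more technical point is ensuring the reference geodesic endpoint (the "$0$" in $d_p(0,\varphi^F_t)$) corresponds to the value $x=0$ of the filtration parameter, so that no recentering constant appears; this follows from normalizing the filtration so that $F^0 H^0(X,mL) = H^0(X,mL)$, which is automatic for $\ord_F$.
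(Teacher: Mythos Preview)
Your approach is conceptually aligned with the paper's: both reduce the statement to identifying the Duistermaat--Heckman measure of the $\ord_F$-filtration with the pushforward of the Monge--Amp\`ere measure under the velocity $\dot\varphi_t^F$, invoking Hisamoto \cite{His16} for this identification, after which $S^{(p)}(L,F) = \int x^p\,d\mathrm{DH}_F = \|\dot\varphi_t^F\|_p^p$ and the claim follows from the constancy of the $d_p$-speed along a geodesic.

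The gap in your plan is regularity. Hisamoto's argument yields the weak convergence of spectral measures to $(\dot\varphi_t)_*(\omega+dd^c\varphi_t)^n$ only when the geodesic ray is at least $C^1$, so that $\dot\varphi_t$ is a genuine continuous function and the pushforward makes sense. This is automatic for filtrations arising from test configurations, but a divisorial filtration $\ord_F$ is in general \emph{not} finitely generated (unless $F$ is dreamy), and the associated maximal ray $\varphi_t^F$ need not be $C^1$. Your proposal to ``work with the test-curve / Legendre transform picture'' does not actually address this, and your description of the test curve as ``linear-by-parts with derivative distribution $\mathrm{DH}_F$'' conflates the scalar concave transform on the Okounkov body with the family $\psi_\lambda^\cF$ of psh functions on $X$.

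The paper closes this gap by approximation: it builds finitely generated $\NN$-filtrations $\cF_m$ (hence test configurations, hence $C^{1,1}$ rays $\varphi_t^{\cF_m}$) approximating $\cF$, shows $S^{(p)}(L,\cF_m)\to S^{(p)}(L,\cF)$ via the uniform estimate in Proposition~\ref{prop:S-p-m->S-p}, and separately proves $d_p(\varphi_t^{\cF_{m_j}},\varphi_t^\cF)\to 0$ using non-Archimedean input from \cite{BHJ17,BoJ18,BoJ18a} (the $u^m=\mathrm{FS}(\|\cdot\|_{m,\bullet})$ increase to $u$, so $E^{\mathrm{NA}}(u^m)\to E^{\mathrm{NA}}(u)$, hence $E(\varphi_t^{\cF_m})\to E(\varphi_t^\cF)$, which upgrades the monotone pointwise convergence to $d_p$-convergence via Darvas's completeness results). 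This two-sided approximation is the substantive technical content you are missing; once you insert it, your outline becomes the paper's proof.
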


The set of moments $\{S^{(p)}\}$ can also be used to construct various kinds of valuative thresholds for $(X,L)$. It turns out that $\alpha$ and $\delta$ are only two special ones. To be more precise, one can put
\begin{equation*}
    \delta^{(p)}(L):=\delta^{(p)}(X,L):=\inf_F\frac{A_X(F)}{S^{(p)}(L,F)^{1/p}}.
\end{equation*}
As we shall see in Proposition \ref{prop:tilde-delta-p=delta-p}, here one can also take inf over all \emph{valuations}, which yields the same invariant.

It is interesting to note that $\{\delta^{(p)}(L)\}_{p\geq1}$ is a decreasing family of valuative invariants with
$$
\delta(L)=\delta^{(1)}(L)
\text{ and }
\alpha(L)=\lim_{p\rightarrow\infty}\delta^{(p)}(L).
$$ 
Moreover, observe that the valuative formulation of $\delta^{(p)}(L)$ also makes sense when $L$ is merely a big $\RR$-line bundle. We show that the continuity of $\delta$ established in \cite{Zhang20} holds for $\delta^{(p)}$ as well.

\begin{theorem}
\label{thm:cont}
$\delta^{(p)}(\cdot)$ is a continuous function on the big cone.
\end{theorem}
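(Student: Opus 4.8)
The plan is to mimic the strategy used for $\delta$ in \cite{Zhang20}, reducing the continuity of $\delta^{(p)}$ on the big cone to uniform (in $F$) estimates on how $S^{(p)}(L,F)$ varies with $L$. First I would observe that $\delta^{(p)}$ is automatically homogeneous of degree $-1$ in $L$ (since $A_X(F)$ is scale-invariant and $S^{(p)}(tL,F)^{1/p}=tS^{(p)}(L,F)^{1/p}$ by the integral formula), so it suffices to prove continuity along segments in the big cone, i.e. to show that for fixed big $\RR$-line bundles $L_0$ and $E$ the function $t\mapsto\delta^{(p)}(L_0+tE)$ is continuous near $t=0$. As in the $\delta$ case, the key is to control $S^{(p)}(L_0+tE,F)^{1/p}/S^{(p)}(L_0,F)^{1/p}$ by quantities depending only on $t$, $L_0$, $E$ and tending to $1$ as $t\to0$, uniformly over all divisors $F$ over $X$.

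The main technical input will be the comparison of volume functions. Fix big classes $L_0, E$. For $t$ small, $L_0\pm tE$ are still big, and standard positivity estimates (e.g. as in \cite{BFJ09,Zhang20}) give a constant $C=C(L_0,E)$ such that, for all $t\in[-t_0,t_0]$ and all effective divisors $G$ over $X$,
\begin{equation*}
(1-Ct)^n\,\vol(L_0-xG)\le\vol(L_0+tE-x'G)\le(1+Ct)^n\,\vol(L_0-xG)
\end{equation*}
after a suitable comparison of the truncation parameters $x,x'$; in particular the pseudo-effective thresholds satisfy $|\tau(L_0+tE,F)-\tau(L_0,F)|\le Ct\,\tau(L_0,F)$. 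Plugging this into the formula $S^{(p)}(L,F)=\frac{1}{\vol(L)}\int_0^{\tau(L,F)}px^{p-1}\vol(L-xF)\,dx$ and changing variables $x\mapsto x/\tau(L,F)$ to normalize the domain of integration, one gets a two-sided bound
\begin{equation*}
(1-\epsilon(t))\,S^{(p)}(L_0,F)\le S^{(p)}(L_0+tE,F)\le(1+\epsilon(t))\,S^{(p)}(L_0,F),
\end{equation*}
with $\epsilon(t)\to0$ as $t\to0$ \emph{independently of $F$}. Taking $p$-th roots, dividing into $A_X(F)$, and then taking the infimum over $F$ yields $|\delta^{(p)}(L_0+tE)-\delta^{(p)}(L_0)|\le\epsilon'(t)\,\delta^{(p)}(L_0)$, which is the desired continuity; one also needs the a priori finiteness $0<\delta^{(p)}(L)<\infty$ for big $L$, which follows from Theorem \ref{thm:p-barycenter-ineq} (giving $\delta^{(p)}(L)\le\frac{n}{n+p}^{-1/p}\alpha(L)^{\text{-type bound}}$, more simply $S^{(p)}(L,F)^{1/p}\asymp_p\tau(L,F)$) together with the known finiteness of $\alpha$ and $\delta$ on the big cone.

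I expect the main obstacle to be making the volume comparison genuinely uniform in $F$: the subtlety is that $\tau(L,F)$ can be large or the class $\pi^*L-xF$ can sit near the boundary of the big cone, where $\vol$ degenerates and multiplicative estimates are delicate. The way around this, following \cite{Zhang20}, is to work on a fixed birational model only implicitly and instead use the homogeneity and the concavity/monotonicity properties of $\vol$ along the segment from $\pi^*L_0$ to $\pi^*(L_0+tE)$ in $N^1(Y)$; since $E$ is big, $\pi^*E$ dominates an ample class on $Y$, and Siu-type or Fujita-approximation estimates then give the required uniform multiplicative control. A secondary point to check is that the change of variables and the bound on $\tau$ interact correctly so that $\epsilon(t)$ depends polynomially on $p$ but not on $F$; this is routine once the volume comparison is in place, because $px^{p-1}$ is integrable on the normalized interval $[0,1]$ with $\int_0^1 px^{p-1}dx=1$.
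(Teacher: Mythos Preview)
Your strategy is essentially the same as the paper's (both follow \cite{Zhang20}): establish uniform-in-$F$ multiplicative control on $S^{(p)}(L,F)$ as $L$ varies, then take the infimum. The difference is in execution. You anticipate needing Siu-type or Fujita-approximation estimates and a change of variables to normalize $\tau(L,F)$, and you flag the uniformity in $F$ as the main obstacle. The paper sidesteps all of this with a clean sandwich trick: given $L$ and an auxiliary class $S$, it writes $L+\gamma S=\frac{1}{1+\varepsilon}(L+\varepsilon L_\varepsilon)$ with $L_\varepsilon:=L+\frac{(1+\varepsilon)\gamma}{\varepsilon}S$, and for $\gamma$ small enough both $(1+\varepsilon)L-L_\varepsilon$ and $L_\varepsilon-(1-\varepsilon)L$ are big. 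Then the volume comparison is just \emph{monotonicity}: $\vol(L+\varepsilon L_\varepsilon-xF)\geq\vol((1+\varepsilon-\varepsilon^2)L-xF)$ and $\vol(L+\varepsilon L_\varepsilon)\leq\vol((1+\varepsilon+\varepsilon^2)L)$, so one is comparing $S^{(p)}$ for scalar multiples of the \emph{same} $L$, where the answer is exact by homogeneity. No tracking of $\tau$, no change of variables, and the uniformity in $F$ is automatic. The paper also records (in a remark) an even slicker variant: set $\bar{\delta}^{(p)}(L):=\delta^{(p)}(L)/\vol(L)^{1/p}$; this is monotone under adding effective divisors and homogeneous of a fixed degree, which immediately gives continuity. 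Your outline would work, but you are making the analysis harder than it needs to be.
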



Furthermore, we have the following result, generalizing the work of Blum-Liu \cite{BL18b}.

\begin{theorem}
\label{thm:delta-m-p-lsc}
Let $\pi: X\rightarrow T$ is a projective family of varieties and $L$ is a $\pi$-ample Cartier divisor on $X$. Assume that $T$ is normal, $X_t$ is klt for all $t\in T$ and $K_{X/T}$ is $\QQ$-Cartier. Then the function
$$
T\ni t\mapsto\delta^{(p)}(X_t,L_t)
$$
is lower semi-continuous.
\end{theorem}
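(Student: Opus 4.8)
The plan is to follow the strategy of Blum--Liu \cite{BL18b} for the $\delta$-invariant, replacing $S_m(L,F)$ by its higher-moment analogue $S_m^{(p)}(L,F)$ throughout, and to reduce the lower semi-continuity of the limit $\delta^{(p)}$ to that of the approximants $\delta_m^{(p)}$ together with a uniform convergence statement. First I would introduce, for each $m$, the fiberwise $m$-th approximant
\begin{equation*}
\delta_m^{(p)}(X_t,L_t):=\inf_F\frac{A_{X_t}(F)}{S_m^{(p)}(L_t,F)^{1/p}},
\end{equation*}
where $F$ ranges over divisors over $X_t$, and recall (from Section \ref{sec:S-p}, which we may assume) that $S_m^{(p)}(L_t,F)$ is computed from a basis of $H^0(X_t,mL_t)$ compatible with the filtration induced by $F$, so that $S_m^{(p)}(L_t,F)=\tfrac1{d_m}\sum_i(\ord_F(s_i)/m)^p$ for such a basis. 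The key structural input is that $\delta_m^{(p)}(X_t,L_t)$ is itself a log canonical threshold of a $\QQ$-divisor on $X_t$ built out of $m$-basis-type divisors: concretely, one shows
\begin{equation*}
\delta_m^{(p)}(X_t,L_t)=\inf\left\{\lct\big(X_t;D^{(p)}\big)\right\},
\end{equation*}
where the infimum is over suitable ``$p$-weighted $m$-basis-type'' $\QQ$-divisors $D^{(p)}$ determined by orderings of a basis of $H^0(X_t,mL_t)$ — this is the $p$-analogue of the Fujita--Odaka linear algebra lemma quoted in the excerpt, and I expect it to require a short but careful bookkeeping argument about which extremal configuration of a basis realizes the infimum of $\sum_i(\ord_F(s_i)/m)^p$ for fixed flag dimensions.

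Granting that description, the semi-continuity of $t\mapsto\delta_m^{(p)}(X_t,L_t)$ follows from the standard lower semi-continuity of log canonical thresholds in families (Ambro, and the relative version used in \cite{BL18b}): one spreads out a basis of $H^0(X_t,mL_t)$ over an étale neighborhood of $t$ in $T$ using that $\pi$ is flat with $d_m$ locally constant after shrinking, forms the relative $p$-weighted $m$-basis divisor over the base, and invokes constructibility/lower semi-continuity of $\lct$ of the total space restricted to fibers, together with the hypothesis that $K_{X/T}$ is $\QQ$-Cartier so that discrepancies vary in families. The infimum over the finitely many combinatorial types of orderings is then a minimum of finitely many lower semi-continuous functions near $t$, hence lower semi-continuous.

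The final step passes from $\delta_m^{(p)}$ to $\delta^{(p)}=\lim_m\delta_m^{(p)}$. Here I would prove that the convergence $\delta_m^{(p)}(X_t,L_t)\to\delta^{(p)}(X_t,L_t)$ is \emph{uniform} in $t$ on a neighborhood of any given point, exactly as in \cite{BL18b}: the ratios $S_m^{(p)}(L_t,F)^{1/p}/S^{(p)}(L_t,F)^{1/p}$ are squeezed between $m$-independent-in-$t$ bounds tending to $1$, because the volume estimates controlling $S_m\to S$ (and their $p$-th moment refinements, via the formula $S^{(p)}(L,F)=\tfrac1{\vol(L)}\int_0^{\tau(L,F)}px^{p-1}\vol(L-xF)\,dx$) depend only on $n$, $\vol(L_t)$ and $\epsilon$-invariants that are themselves controlled uniformly over a compact base or after shrinking $T$. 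A locally uniform limit of lower semi-continuous functions is lower semi-continuous, which completes the argument. The main obstacle I anticipate is precisely the $p$-weighted Fujita--Odaka lemma and the attendant uniform comparison of $S_m^{(p)}$ with $S^{(p)}$: the function $x\mapsto x^p$ is convex rather than linear, so the clean ``averaging over a flag'' identity for $p=1$ must be replaced by an argument that still identifies the optimal configuration and still yields $m$-uniform error bounds; everything else is a routine transcription of the $\delta$-case once this is in hand.
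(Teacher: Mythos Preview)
Your overall architecture matches the paper's: define level-$m$ approximants, prove they are lower semi-continuous, and establish uniform-in-$t$ convergence to $\delta^{(p)}$. The gap is in the second step. For $p=1$ the Fujita--Odaka identity $\tfrac{1}{d_m}\sum_i\ord_F(s_i)/m=\ord_F(D)$ for the basis divisor $D=\tfrac{1}{md_m}\sum_i\{s_i=0\}$ rests on the \emph{additivity} of $\ord_F$ on effective divisors, and this is what gives $\delta_m=\inf_D\lct(X;D)$. For $p>1$ the quantity $\big(\tfrac{1}{d_m}\sum_i(\ord_F(s_i)/m)^p\big)^{1/p}$ is nonlinear in the $\ord_F(s_i)$, so it is not $\ord_F$ of any single $\QQ$-divisor built from $\{s_i\}$; your proposed ``$p$-weighted $m$-basis divisor determined by an ordering'' does not exist in the form needed (the sum is symmetric in the $s_i$, so orderings are irrelevant, and no fixed reweighting $\sum c_i\{s_i=0\}$ has the correct vanishing order along every $F$ at once). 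One can recover an lct description via H\"older duality, writing $d_m^{1/p}S_m^{(p)}(L,F)^{1/p}=\sup_{\|c\|_q=1,\,c\ge0}\ord_F\big(\tfrac1m\sum_ic_i\{s_i=0\}\big)$ and hence $\delta_m^{(p)}=d_m^{1/p}\inf_{\{s_i\},\,c}\lct\big(X;\tfrac1m\sum_ic_i\{s_i=0\}\big)$, but pushing this continuous family of weighted divisors through the Blum--Liu constructibility argument is not the ``short bookkeeping'' you anticipate, and you have not indicated how to do it.

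The paper sidesteps this by replacing divisors with filtrations. It first passes to the valuative version $\tilde\delta^{(p)}(L)=\inf_{v}A_X(v)/S^{(p)}(L,v)^{1/p}$ and its quantization
\[
\hat{\tilde\delta}^{(p)}_m(L):=\inf_{\cF}\frac{\lct\big(X,\fb_\bullet(\hat\cF)\big)}{S_m^{(p)}(L,\cF)^{1/p}},
\]
where $\cF$ ranges over $\NN$-filtrations of $R_m$ with $T_m(\cF)\le1$ and $\hat\cF$ is the filtration of $R$ it generates. The numerator is the lct of a graded sequence of base ideals, which needs no basis-divisor interpretation, and lower semi-continuity of $t\mapsto\hat{\tilde\delta}^{(p)}_m(X_t,L_t)$ then follows verbatim from \cite{BL18b}. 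Uniform convergence $\hat{\tilde\delta}^{(p)}_m\to\tilde\delta^{(p)}$ uses Proposition~\ref{prop:S=S-N} and Theorem~\ref{thm:S-p=tilde-S-p-m}. Finally---and this is a step your outline does not address---one must prove $\tilde\delta^{(p)}=\delta^{(p)}$ (Proposition~\ref{prop:tilde-delta-p=delta-p}); for $p=1$ this is immediate from the basis-divisor picture, but for general $p$ it requires a separate argument via divisors computing the lct of finitely generated graded ideals.
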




And also, in the Fano setting, it turns out that $\delta^{(p)}$ can give sufficient conditions for K-stability.

\begin{theorem}
\label{thm:delta-p=>KE}
Let $X$ be a $\QQ$-Fano variety of dimension $n$. If $$\delta^{(p)}(-K_X)>\frac{n}{n+1}\bigg(\frac{n+p}{n}\bigg)^{1/p},$$
then $X$ is uniformly K-stable and hence admits a K\"ahler--Einstien metric.
\end{theorem}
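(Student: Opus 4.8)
The plan is to reduce the statement to Theorem~\ref{theorem:delta}(2), i.e.\ to show that the hypothesis $\delta^{(p)}(-K_X) > \frac{n}{n+1}\big(\frac{n+p}{n}\big)^{1/p}$ forces $\delta(-K_X) > 1$. The bridge between the two invariants is the upper bound in Theorem~\ref{thm:p-barycenter-ineq}, which controls $S^{(p)}(L,F)^{1/p}$ in terms of $\tau(L,F)$, together with the lower bound in Proposition~\ref{prop:Fujita-barycenter-ineq}, which controls $S(L,F)$ from below by $\tau(L,F)/(n+1)$. Combining these should yield a uniform comparison $S(L,F) \leq C_{n,p}\, S^{(p)}(L,F)^{1/p}$ valid for every divisor $F$ over $X$, and then taking infima of $A_X(F)$ divided by each side will relate $\delta$ and $\delta^{(p)}$.

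Concretely, I would first fix an arbitrary divisor $F$ over $X$ with $L = -K_X$ and write $\tau = \tau(L,F)$. From Theorem~\ref{thm:p-barycenter-ineq} we have $S^{(p)}(L,F)^{1/p} \geq \big(\frac{\Gamma(p+1)\Gamma(n+1)}{\Gamma(p+n+1)}\big)^{1/p}\tau$; however, for the direction we need it is the \emph{reverse} estimate that matters, so instead I use $S^{(p)}(L,F)^{1/p} \leq \big(\frac{n}{n+p}\big)^{1/p}\tau$ together with the lower bound $S(L,F)\geq \frac{\tau}{n+1}$ from Proposition~\ref{prop:Fujita-barycenter-ineq}. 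These give
\[
S(L,F) \;\geq\; \frac{\tau}{n+1} \;\geq\; \frac{1}{n+1}\Big(\frac{n+p}{n}\Big)^{1/p} S^{(p)}(L,F)^{1/p},
\]
hence $\dfrac{A_X(F)}{S(L,F)} \leq (n+1)\big(\frac{n}{n+p}\big)^{1/p}\dfrac{A_X(F)}{S^{(p)}(L,F)^{1/p}}$. Wait --- this inequality goes the wrong way to lower-bound $\delta$. The correct move is to use instead the upper bound $S(L,F) \leq \frac{n\tau}{n+1}$ from Proposition~\ref{prop:Fujita-barycenter-ineq} in combination with the \emph{lower} bound on $S^{(p)}$; from $S^{(p)}(L,F)^{1/p} \geq \big(\frac{\Gamma(p+1)\Gamma(n+1)}{\Gamma(p+n+1)}\big)^{1/p}\tau \geq \big(\frac{\Gamma(p+1)\Gamma(n+1)}{\Gamma(p+n+1)}\big)^{1/p}\cdot\frac{n+1}{n}S(L,F)$ we would obtain a comparison in the needed direction, but the constant would involve the Gamma factor rather than $\frac{n}{n+1}(\frac{n+p}{n})^{1/p}$. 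So the cleanest route is: use $S(L,F) \leq \frac{n}{n+1}\tau(L,F)$ and $\tau(L,F)^p \leq \frac{n+p}{n}S^{(p)}(L,F)$ (the rearranged upper bound of Theorem~\ref{thm:p-barycenter-ineq}) to get
\[
S(L,F) \;\leq\; \frac{n}{n+1}\,\tau(L,F) \;\leq\; \frac{n}{n+1}\Big(\frac{n+p}{n}\Big)^{1/p} S^{(p)}(L,F)^{1/p}.
\]
Dividing $A_X(F)$ by both sides and taking the infimum over all $F$ then yields
\[
\delta(-K_X) \;\geq\; \Big(\frac{n}{n+1}\Big)^{-1}\Big(\frac{n+p}{n}\Big)^{-1/p}\,\delta^{(p)}(-K_X) \;=\; \frac{n+1}{n}\Big(\frac{n}{n+p}\Big)^{1/p}\delta^{(p)}(-K_X).
\]
Thus if $\delta^{(p)}(-K_X) > \frac{n}{n+1}\big(\frac{n+p}{n}\big)^{1/p}$ then $\delta(-K_X) > 1$, so by Theorem~\ref{theorem:delta}(2) $X$ is uniformly K-stable, and by \cite{BBJ18,LTW19} it admits a Kähler--Einstein metric.

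The only genuinely delicate point is to make sure the chain of inequalities is applied in the correct direction for \emph{every} $F$ simultaneously, so that the passage to infima is legitimate; since both $\tau$-comparisons in Theorem~\ref{thm:p-barycenter-ineq} and Proposition~\ref{prop:Fujita-barycenter-ineq} hold with constants depending only on $n$ and $p$ (and not on $F$), the infimum over $F$ passes through cleanly and there is no further obstacle. One should also record that $S^{(p)}(L,F) > 0$ and $\tau(L,F) < \infty$ for big $L$ so that all quotients are well-defined, which is immediate from the integral formula for $S^{(p)}$ and the fact that $\tau(L,F)$ is a finite pseudo-effective threshold.
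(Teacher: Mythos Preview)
Your argument contains a genuine sign error that breaks the chain of inequalities. You write that ``$\tau(L,F)^p \leq \frac{n+p}{n}S^{(p)}(L,F)$ (the rearranged upper bound of Theorem~\ref{thm:p-barycenter-ineq})'', but the upper bound in that theorem is $S^{(p)}(L,F)\leq\frac{n}{n+p}\tau(L,F)^p$, which rearranges to $\tau(L,F)^p \geq \frac{n+p}{n}S^{(p)}(L,F)$, the \emph{reverse} of what you claimed. Consequently the chain
\[
S(L,F)\leq \frac{n}{n+1}\tau(L,F)\leq \frac{n}{n+1}\Big(\frac{n+p}{n}\Big)^{1/p}S^{(p)}(L,F)^{1/p}
\]
fails at the second step. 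In fact, neither combination of the two barycenter inequalities (Proposition~\ref{prop:Fujita-barycenter-ineq} and Theorem~\ref{thm:p-barycenter-ineq}) routed through $\tau$ can produce the required bound with the constant $\frac{n}{n+1}\big(\frac{n+p}{n}\big)^{1/p}$: the only chain going in the correct direction uses the \emph{lower} bound on $S^{(p)}$ together with $S\leq\frac{n}{n+1}\tau$, and that yields the weaker constant $\big(\frac{\Gamma(p+1)\Gamma(n+1)}{\Gamma(p+n+1)}\big)^{1/p}\frac{n+1}{n}$, which is strictly smaller than what is needed (e.g.\ for $n=p=2$ one gets $\frac{3}{2}\cdot 6^{-1/2}<\frac{3}{2}\cdot 2^{-1/2}$).

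The paper does not pass through $\tau$ at all. Its key input is Proposition~\ref{prop:H-p-increase}, which asserts directly that $p\mapsto\big(\frac{n+p}{n}S^{(p)}(L,F)\big)^{1/p}$ is non-decreasing; comparing $p$ with $1$ gives exactly
\[
S^{(p)}(L,F)^{1/p}\geq \frac{n+1}{n}\Big(\frac{n}{n+p}\Big)^{1/p}S(L,F),
\]
from which $\delta(L)\geq\frac{n+1}{n}\big(\frac{n}{n+p}\big)^{1/p}\delta^{(p)}(L)$ and the theorem follow. That monotonicity is a genuinely new inequality, proved via a log-convexity/H\"older argument applied to a measure coming from the Brunn--Minkowski concave function $f$; it is not a formal consequence of the two barycenter bounds, and your proposal is missing precisely this ingredient.
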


For $p=1$ this is simply the $\delta$-criterion of Fujita-Odaka \cite{FO18} (which says that $\delta(-K_X)>1$ implies uniform K-stability), while for $p=\infty$ this recovers the $\alpha$-criterion of Tian \cite{Tian87} and Odaka--Sano \cite{OS12} (which says that $\alpha(-K_X)>\frac{n}{n+1}$ implies uniform K-stability). Thus Theorem \ref{thm:delta-p=>KE} provides a bridge between $\alpha$ and $\delta$-invariants and gives rise to a family of valuative criterions for the existence of K\"ahler--Einstein metrics.

To show Theorem \ref{thm:delta-p=>KE}, the key new ingredient is the following monotonicity:
$$
\bigg(\frac{n}{n+p}\bigg)^{1/p}\delta^{(p)}(L) \text{ is non-increasing in }p.
$$
This is deduced from Proposition \ref{prop:H-p-increase}, which relies on the Brunn--Minkowski inequality and yields a new result in convex geometry regarding the $p$-th barycenter of a convex body that is probably of independent interest.

Interestingly, the above monotonicity is actually ``sharp", from which we can characterize the borderline case in Theorem \ref{thm:delta-p=>KE}.
\begin{theorem}
\label{thm:delta-p-equality}
If an $n$-dimensional Fano manifold
X satisfies that
$$\delta^{(p)}(-K_X)=\frac{n}{n+1}\bigg(\frac{n+p}{n}\bigg)^{1/p}$$
for some $p\in(1,\infty]$. Then either $X=\PP^1$ or $X$ is K-stable.
In particular, X admits a K\"ahler-Einstein metric.
\end{theorem}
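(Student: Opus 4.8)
### Proof proposal for Theorem \ref{thm:delta-p-equality}

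The plan is to run the equality-case analysis through the chain of inequalities underlying Theorem \ref{thm:delta-p=>KE}, isolating exactly where rigidity can be lost. Recall that the proof of Theorem \ref{thm:delta-p=>KE} factors through two estimates: the monotonicity $\big(\tfrac{n}{n+p}\big)^{1/p}\delta^{(p)}(L)$ is non-increasing in $p$, which at $p=1$ (or, more precisely, as $p\downarrow 1$, comparing against $\delta^{(1)}=\delta$) forces
$$
\delta(-K_X)\ \geq\ \bigg(\frac{n+1}{n}\bigg)^{1/1}\bigg(\frac{n}{n+p}\bigg)^{1/p}\delta^{(p)}(-K_X),
$$
together with the fact that $\delta(-K_X)\geq 1$ implies K-semistability and $\delta(-K_X)>1$ implies uniform K-stability (Theorem \ref{theorem:delta}). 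So if $\delta^{(p)}(-K_X)=\tfrac{n}{n+1}\big(\tfrac{n+p}{n}\big)^{1/p}$, the monotonicity gives $\delta(-K_X)\geq 1$, hence $X$ is K-semistable; and $X$ is K-stable unless equality $\delta(-K_X)=1$ holds, which in turn can only happen if the monotonicity $\big(\tfrac{n}{n+q}\big)^{1/q}\delta^{(q)}(-K_X)$ is \emph{constant} on the interval $[1,p]$. Thus the whole problem reduces to: classify $(X,-K_X)$ for which this renormalized family is constant on a nondegenerate $q$-interval.

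The next step is to trace the constancy back to the convex-geometric input, Proposition \ref{prop:H-p-increase}. The monotonicity of $\big(\tfrac{n}{n+p}\big)^{1/p}\delta^{(p)}$ comes from a pointwise (in $F$) inequality bounding $S^{(p)}(L,F)^{1/p}$ against $S^{(q)}(L,F)^{1/q}$ with the stated gamma/ratio constants, which is the Brunn--Minkowski-type statement about the $p$-th barycenter of the concave function $x\mapsto \vol(\pi^*L-xF)^{1/n}$ on $[0,\tau(L,F)]$. The equality case of Brunn--Minkowski / the sharp barycenter inequality forces this function to be \emph{affine} (equivalently, $\vol(\pi^*L-xF)=\vol(L)\big(1-x/\tau(L,F)\big)^n$ on its support), which is precisely the situation where the upper bound in Theorem \ref{thm:p-barycenter-ineq}, $S^{(p)}(L,F)=\tfrac{n}{n+p}\tau(L,F)^p$, is attained. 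So constancy of the renormalized $\delta^{(q)}$-family on $[1,p]$, combined with the infimum over $F$ being (asymptotically) achieved, says there is a sequence of divisors $F_j$ computing $\delta^{(q)}$ for all $q\in[1,p]$ simultaneously and whose volume profiles degenerate to the affine model. I would then invoke the existence of a minimizer of $\delta$ (for K-semistable $X$ with $\delta=1$, a quasi-monomial valuation computing $\delta(-K_X)$ exists by the work cited via \cite{BJ17} and subsequent literature) to upgrade this to an \emph{honest} divisor $F_0$ over $X$ with $A_X(F_0)=S(-K_X,F_0)=\tfrac{1}{n+1}\tau(-K_X,F_0)$ and affine volume profile. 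The condition $S(-K_X,F_0)=\tfrac{\tau}{n+1}$ is the lower extreme of Fujita's barycenter inequality (Proposition \ref{prop:Fujita-barycenter-ineq}), whose equality case again is the affine profile — consistent, and this is the ``$\PP^1$-like'' configuration.

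The final step is to extract the geometry. An affine volume profile $\vol(-K_X-xF_0)=(-K_X)^n(1-x/\tau)^n$ means the section ring of $-K_X$ filtered by $\ord_{F_0}$ looks like that of a cone, and the Newton--Okounkov body of $-K_X$ with respect to a flag refining $F_0$ is a simplex with the $F_0$-axis as an edge. Together with $A_X(F_0)=\tfrac{1}{n+1}\tau(-K_X,F_0)$ and $-K_X$ being the polarization, this is the signature of $X=\PP^n$ with $F_0$ a hyperplane — but $\PP^n$ for $n\geq 2$ has $\delta(-K_{\PP^n})=1$ and more strongly one checks $\delta^{(p)}(-K_{\PP^n})=\tfrac{n}{n+1}\big(\tfrac{n+p}{n}\big)^{1/p}$ fails for $n\geq 2$, $p>1$ (the hyperplane divisor gives $S^{(p)}(-K_{\PP^n},H)=\tfrac{n}{n+p}(n+1)^p$, so $A/S^{1/p}=\tfrac{n+1}{(n+1)(n/(n+p))^{1/p}}$, and one must rule out that the \emph{infimum} over all $F$ stays this large) — so among Fano manifolds only $n=1$, i.e. $X=\PP^1$, survives the equality with $\delta^{(p)}$-infimum \emph{attained by the affine model}; any other K-semistable $X$ with $\delta^{(p)}(-K_X)$ equal to the threshold must actually have strict monotonicity somewhere, forcing $\delta(-K_X)>1$ and hence K-stability. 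K-stability (together with $\QQ$-Fano) then yields a Kähler--Einstein metric by \cite{LTW19} (and $\PP^1$ trivially has one), completing the proof.

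I expect the main obstacle to be the rigidity/classification step: going from ``the volume profile of the minimizing valuation is affine'' to ``$X=\PP^1$ or $X$ is K-stable'' cleanly. The delicate points are (i) ensuring the infimum defining $\delta^{(p)}$ is \emph{attained} (not merely approached) so that the Brunn--Minkowski equality case applies to a genuine valuation rather than a limit — this needs the properness/existence results for minimizers of stability thresholds on K-semistable Fanos; and (ii) showing that for $n\geq 2$ no K-semistable Fano other than a K-stable one can sit exactly on the threshold, which amounts to checking that the affine-profile scenario forces $\delta(-K_X)=1$ to be witnessed only in a way incompatible with $n\geq 2$ unless $X$ is already K-stable (so that the ``$=1$'' in $\delta$ never actually occurs for $n\geq2$ at the threshold). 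I would handle (ii) by a direct argument: if $\delta^{(p)}(-K_X)$ hits the threshold and $\delta(-K_X)=1$, the constancy of the renormalized family on all of $[1,p]$ propagates (by the same monotonicity applied on sub-intervals and letting $p\to\infty$, using Corollary \ref{cor:tau=lim-S-p}) to $\alpha(-K_X)=\tfrac{n}{n+1}$ with a divisor achieving $\tau=(n+1)A_X(F_0)$ and $S=\tfrac{\tau}{n+1}$, which pins down $X=\PP^n$; then a separate computation (or citation of the known value of $\delta^{(p)}$ for projective space) shows the threshold equality fails for $\PP^n$ when $n\geq2$ because some other divisor — e.g. the exceptional divisor of a blow-up of a point — gives a strictly smaller quotient $A_X(F)/S^{(p)}(-K_X,F)^{1/p}$. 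This contradiction leaves only $n=1$, $X=\PP^1$, or $\delta(-K_X)>1$.
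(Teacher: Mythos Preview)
Your overall strategy matches the paper's: reduce to the case $\delta(-K_X)=1$, produce a divisor $F$ with $A_X(F)=S(-K_X,F)$ achieving equality in the monotonicity of Proposition~\ref{prop:H-p-increase}, classify the resulting volume profile, conclude $X\cong\PP^n$, and then derive a contradiction for $n\geq 2$ by a direct computation. However, your execution of the equality analysis is wrong in a way that breaks the argument.

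You claim that equality in the monotonicity forces $\vol(L-xF)^{1/n}$ to be affine, i.e.\ $\vol(L-xF)=\vol(L)(1-x/\tau)^n$, and that this is ``precisely the situation where the upper bound $S^{(p)}=\tfrac{n}{n+p}\tau^p$ is attained.'' Both assertions are incorrect. The profile $(1-x/\tau)^n$ attains the \emph{lower} bound $S^{(p)}=\tfrac{\Gamma(p+1)\Gamma(n+1)}{\Gamma(p+n+1)}\tau^p$ of Theorem~\ref{thm:p-barycenter-ineq}, not the upper one. More importantly, the monotonicity of $H(p)$ in Proposition~\ref{prop:H-p-increase} is not a direct Brunn--Minkowski inequality; it comes from the log-convexity of $K(s)=s\int_0^\tau x^{s-1}g^{n-1}(x)\,dx$ with $g(x)=f(x)/x$, proved via H\"older's inequality on the measure $d(-g^{n-1})$. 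The paper's equality analysis (which is the ``subtle measure-theoretic argument'' flagged in the introduction) shows that equality in H\"older forces $d(-g^{n-1})$ to be supported on finitely many points, so $g$ is a step function; combined with concavity of $f=xg$ this gives $f(x)=Cx$, hence $\vol(L-xF)=\vol(L)\bigl(1-(x/\tau)^n\bigr)$ and $S=\tfrac{n}{n+1}\tau$---the \emph{upper} barycenter extreme. This is what feeds into Fujita's classification \cite[Theorem~4.1]{Fuj19c}, which requires $A_X(F)=S=\tfrac{n}{n+1}\tau$, not your $A_X(F)=\tau/(n+1)$.

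The error propagates to your endgame as well: for the hyperplane $H\subset\PP^n$ the profile is $(1-x/\tau)^n$, so $S^{(p)}(-K_{\PP^n},H)=(n+1)^p\tfrac{\Gamma(p+1)\Gamma(n+1)}{\Gamma(p+n+1)}$, not $\tfrac{n}{n+p}(n+1)^p$ as you wrote. The paper's contradiction for $n\geq 2$ comes from showing directly that $\tfrac{1}{n+1}\bigl(\tfrac{\Gamma(p+n+1)}{\Gamma(p+1)\Gamma(n+1)}\bigr)^{1/p}<\tfrac{n}{n+1}\bigl(\tfrac{n+p}{n}\bigr)^{1/p}$ via an elementary estimate---no blow-up divisor is needed. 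Finally, the existence of a \emph{divisor} (not merely a quasi-monomial valuation) computing $\delta(-K_X)=1$ is supplied in the paper by \cite[Theorem~1.6]{Fuj19}, which gives a dreamy divisor; you should cite this rather than the general quasi-monomial existence result.
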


When $p=\infty$ this is exactly the main theorem of Fujita \cite{Fuj19c} (which says that $\alpha(-K_X)=\frac{n}{n+1}$ implies KE when $X$ is smooth). The proof of Theorem \ref{thm:delta-p-equality} uses the strategy of \cite{Fuj19c}, but one major difference is that the equality in our 
case is more difficult to grasp, which requires some subtle measure-theoretic argument. 

\textbf{Organization.}
The rest of this article is organized as follows.
In Section \ref{sec:S-p} we introduce $S^{(p)}$ in a more formal way, using filtrations. In Section \ref{sec:conti} we prove Theorem \ref{thm:cont} and in Section \ref{sec:lsc} we prove Theorem \ref{thm:delta-m-p-lsc}. Then Theorem \ref{thm:p-barycenter-ineq}, Theorem \ref{thm:delta-p=>KE} and Theorem \ref{thm:delta-p-equality} are proved in Section \ref{sec:bary}. Finally in Section \ref{sec:test-curve} we discuss the relation between $S^{(p)}$ and $d_p$-geometry and then prove a generalized version of Theorem \ref{thm:S-p=d_p/t}.

\textbf{Acknowledgments.}
The author would like to thank Tam\'as Darvas and Mingchen Xia for helpful discussions on Section \ref{sec:test-curve}. Special thanks go to Yuchen Liu for valuable comments and for proving Proposition \ref{prop:tilde-delta-p=delta-p}. He also thanks Yanir Rubinstein for suggesting Theorem \ref{thm:delta-p-equality}. The author is supported by the China post-doctoral grant BX20190014.

\section{Expected vanishing order with higher moments}
\label{sec:S-p}

Let $X$ be a klt projective variety and $L$ an ample line bundle on $X$. Also fix some $p\geq1$.

\subsection{Divisorial valuations}
The next definition is a natural generalization of the expected vanishing order introduced in \cite{FO18,BJ17}.
\begin{definition}
\label{def:S-p}
Let $p\in[1,+\infty)$.
Given any prime divisor $F$ over $X$, the $p$-th moment of the expected vanishing order of $L$ along $F$ at level $m$ is given by
$$
S_m^{(p)}(L,F):=\sup\bigg\{\frac{1}{d_m}\sum_{i=1}^{d_m}\bigg(\frac{\ord_F(s_i)}{m}\bigg)^p\bigg|\{s_i\}_{i=1}^{d_m}\text{ is a basis of }H^0(X,mL)\bigg\}.
$$
We also put
$$
S^{(p)}(L,F):=\lim_{m\rightarrow\infty}S^{(p)}_m(L,F),
$$
which is called $p$-th moment of the expected vanishing order of $L$ along $F$.
\end{definition}

This definition can be reformulated as follows (which in turn justifies the existence of the above limit).

\begin{lemma}
\label{lem:S-p-formula}
Given any prime divisor $F\subset Y\xrightarrow{\pi}X$, one has
$$
S_m^{(p)}(L,F)=\frac{1}{d_m}\sum_{j\geq1}\bigg(\frac{j}{m}\bigg)^p\cdot\bigg(h^0(m\pi^*L-jF)-h^0(m\pi^*L-(j+1)F)\bigg)
$$
and
$$
S^{(p)}(L,F)=\frac{1}{\vol(L)}\int_0^{\tau(L,F)}x^pd(-\vol(L-xF))=\frac{p}{\vol(L)}\int_0^{\tau(L,F)}x^{p-1}\vol(L-xF)dx.
$$
\end{lemma}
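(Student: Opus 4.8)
The plan is to reduce the finite-level quantity $S_m^{(p)}(L,F)$ to a weighted sum of the dimensions $h^0(m\pi^*L-jF)$ by a linear-algebra argument in the style of Fujita--Odaka, then to pass to the limit by interpreting this sum as an integral against a measure that converges weakly to the Duistermaat--Heckman measure of the filtration, and finally to integrate by parts.

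\textbf{Step 1 (finite level).} First I would identify $H^0(X,mL)$ with $H^0(Y,m\pi^*L)$ via $\pi^*$, so that $\ord_F$ induces the decreasing filtration $\mathcal{F}^j:=H^0(Y,m\pi^*L-jF)$ with $h_j:=\dim\mathcal{F}^j=0$ for $j>\tau_m(L,F)$. For any basis $\{s_i\}$ of $H^0(X,mL)$ and any $j\geq0$, the elements $s_i$ with $\ord_F(s_i)\geq j$ are linearly independent in $\mathcal{F}^j$, so their number $N_j$ satisfies $N_j\leq h_j$, with equality for all $j$ precisely when $\{s_i\}$ is compatible with the filtration. Writing $(\ord_F(s_i)/m)^p=\sum_{j=1}^{\ord_F(s_i)}\big((j/m)^p-((j-1)/m)^p\big)$ and summing over $i$ gives $\sum_i(\ord_F(s_i)/m)^p=\sum_{j\geq1}\big((j/m)^p-((j-1)/m)^p\big)N_j$; since the weights are strictly positive, the supremum over bases is attained exactly by compatible ones and equals $\frac1{d_m}\sum_{j\geq1}\big((j/m)^p-((j-1)/m)^p\big)h_j$. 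A telescoping (Abel) rearrangement, using $h_j=0$ for $j\gg0$, turns this into $\frac1{d_m}\sum_{j\geq1}(j/m)^p(h_j-h_{j+1})$, which is the first claimed formula.

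\textbf{Step 2 (the limit).} Next I would let $\mu_m$ be the probability measure placing mass $(h_j-h_{j+1})/d_m$ at the point $j/m$, so that $S_m^{(p)}(L,F)=\int x^p\,d\mu_m$, with all $\mu_m$ supported in a fixed interval $[0,C]$ since $L$ is big. Using the asymptotics $d_m=\vol(L)m^n/n!+o(m^n)$ and $h^0(Y,m\pi^*L-jF)=\vol\big(\pi^*L-\tfrac{j}{m}F\big)m^n/n!+o(m^n)$, together with the continuity and monotonicity of $x\mapsto\vol(\pi^*L-xF)$, one shows that $\mu_m$ converges weakly to $\mu:=\frac1{\vol(L)}\,d\big(-\vol(\pi^*L-xF)\big)$ on $[0,\tau(L,F)]$ --- this is exactly the input used to produce $S(L,F)=S^{(1)}(L,F)$ in \cite{FO18,BJ17}. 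Since $x\mapsto x^p$ is continuous and bounded on $[0,C]$, weak convergence gives $S^{(p)}(L,F)=\lim_m\int x^p\,d\mu_m=\frac1{\vol(L)}\int_0^{\tau(L,F)}x^p\,d(-\vol(L-xF))$.

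\textbf{Step 3 (integration by parts).} With $V(x):=\vol(L-xF)$ decreasing and $V(\tau(L,F))=0$, and since $x^p$ vanishes at $x=0$, integration by parts removes both boundary terms and yields $\int_0^{\tau(L,F)}x^p\,d(-V(x))=p\int_0^{\tau(L,F)}x^{p-1}V(x)\,dx$, which is the second formula.

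The one step demanding genuine care is the weak convergence in Step 2: one needs equi-boundedness of the supports and a Riemann-sum approximation of the volume profile uniform enough to pass to the limit, rather than a merely pointwise estimate. This is by now standard in the subject, and replacing the test function $x$ by $x^p$ introduces no new difficulty since $x^p$ is bounded and continuous on the common compact interval containing all the supports. Steps 1 and 3 are routine.
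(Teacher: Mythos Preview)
Your proof is correct and follows essentially the same approach as the paper. Step~1 matches the paper's argument almost verbatim (your $N_j$ is their $a_j$), and for the limit the paper simply invokes ``the theory of filtered graded linear series and Newton--Okounkov bodies'' with a citation, which is precisely the weak convergence of the Duistermaat--Heckman measures you spell out in Steps~2--3; so you have unpacked what the paper leaves to a reference.
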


\begin{proof}
For the first statement, we follow the proof of \cite[Lemma 2.2]{FO18}. Given a basis $\{s_i\}$ of $H^0(X,mL)$, for integer $j\geq0$, let $a_j$ be the number of sections of $\{s_i\}$ that are contained in the subspace $H^0(m\pi^*L-jF)$ when pulled back to $Y$. Then one has
\begin{equation*}
    \begin{aligned}
    \frac{1}{d_m}\sum_{i=1}^{d_m}\bigg(\frac{\ord_F(s_i)}{m}\bigg)^p&=\frac{1}{d_m}\sum_{j\geq1}\bigg(\frac{j}{m}\bigg)^p\cdot(a_j-a_{j+1})\\
    &=\frac{1}{d_m}\sum_{j\geq 1}\bigg[\bigg(\frac{j}{m}\bigg)^p-\bigg(\frac{j-1}{m}\bigg)^p\bigg]\cdot a_j\\
    &\leq\frac{1}{d_m}\sum_{j\geq 1}\bigg[\bigg(\frac{j}{m}\bigg)^p-\bigg(\frac{j-1}{m}\bigg)^p\bigg]\cdot h^0(m\pi^*L-jF)\\
    &=\frac{1}{d_m}\sum_{j\geq1}\bigg(\frac{j}{m}\bigg)^p\cdot\bigg(h^0(m\pi^*L-jF)-h^0(m\pi^*L-(j+1)F)\bigg).
    \end{aligned}
\end{equation*}
The equality is achieved exactly when $\{s_i\}$ is compatible with the filtration $\{H^0(m\pi^*L-jF)\}_{j\geq0}$.

The second statement then follows from the theory of filtrated graded linear series and Newton--Okounkov bodies; see e.g. the proof of \cite[Lemma 2.7]{CRZ19} for an exposition.
\end{proof}

\begin{remark}
Very recently, using $\{S^{(p)}\}$, Han--Li \cite{HL20} constructed a non-Archimedean analogue of the $H$-functional of the K\"ahler--Ricci flow, which were previously studied by Tian--Zhang--Zhang--Zhu \cite{TZZZ13}, He \cite{He16} and Dervan--Sz\'ekelyhidi \cite{DS20}. More precisely, consider
$$
\sum_{k=1}^\infty \frac{(-1)^kS^{(k)}(L,F)}{k!}=\frac{1}{\vol(L)}\int_0^{\tau(L,F)}e^{-x}\vol(L-xF)dx. 
$$
Then Han--Li defined
$$
H^\mathrm{{NA}}(X,L):=\inf_F\bigg\{A_X(F)+\log\bigg(1-\frac{1}{\vol(L)}\int_0^{\tau(L,F)}e^{-x}\vol(L-xF)dx\bigg)\bigg\}.
$$
As shown by Han--Li \cite{HL20}, this invariant plays significant roles in the study of the Hamilton--Tian conjecture just as the $\delta$-invariant in the Yau--Tian--Donaldson conjecture.
\end{remark}

\subsection{Filtrations}
For simplicity we put
$$
R:=\bigoplus_{m\geq0}R_m
$$
with
$
R_m:=H^0(X,mL).
$
We say $\cF$ is a filtration of $R$ if for any $\lambda\in\RR$ and $m\in\NN$ there is a subspace $\cF^\lambda R_m\subset R_m$ satisfying
\begin{enumerate}
    \item $\cF^{\lambda^\prime}R_m\supseteq\cF^\lambda R_m$\ for any $\lambda^\prime\leq\lambda$;
    
    \item $\cF^\lambda R_m=\bigcap_{\lambda^\prime<\lambda}\cF^{\lambda^\prime}R_m$;
    
    \item 
    $\cF^{\lambda_1}R_{m_1}\cdot\cF^{\lambda_2}R_{m_2}\subseteq\cF^{\lambda_1+\lambda_2}R_{m_1+m_2}$ for any $\lambda_1,\lambda_2$ and $m_1,m_2\in\NN$;
    
    \item $\cF^\lambda R_m=R_m$ for $\lambda\leq0$ and $\cF^\lambda R_m=\{0\}$ for $\lambda\gg0$. 
    
\end{enumerate}
We say $\cF$ is \emph{linearly bounded} if
there exists $C>0$ such that $\cF^{Cm}R_m=\{0\}$ for any $m\in\NN$.
We say $\cF$ is a filtration of $R_m$ if only items $(1), (2)$ and $(4)$ are satisfied. We call $\cF$ \emph{trivial} if $\cF^\lambda R_m=\{0\}$ for any $\lambda>0$.

Following \cite{BJ17}, the definition of $S_m^{(p)}$ also extends to filtrations of $R_m$. More precisely, let $\cF$ be a filtration of $R_m$, the \emph{jumping numbers} of $\cF$ are given by
$$
0\leq a_{m,1}\leq a_{m,2}\leq\cdots\leq a_{m,d_m}
$$
where
\begin{equation}
    \label{eq:def-jump-number}
    a_{m,j}:=a_{m,k}(\cF):=\inf\{\lambda\in\RR_{\geq0}|\ \mathrm{codim}\cF^\lambda R_m\geq j\}.
\end{equation}
Then we put (see also \cite[(81)]{HL20})
$$
S^{(p)}_m(L,\cF):=\frac{1}{d_m}\sum_{j=1}^{d_m}\bigg(\frac{a_{m,j}}{m}\bigg)^p\text{ and }T_m(L,\cF):=\frac{a_{m,d_m}}{m}.
$$
Thus $S_m^{(1)}(L,\cF)=S_m(L,\cF)$ is the rescaled sum of jumping numbers studied in \cite{BJ17}.

A filtration is called an $\NN$-filtration if all its jumping numbers are non-negative integers. For instance the filtration induced by a divisor over $X$ is $\NN$-filtration. Given any filtration $\cF$ of $R_m$, its induced $\NN$-filtration $\cF_\NN$ is given by
\begin{equation}
    \label{eq:def-F-N}
    \cF^\lambda_\NN R_m:=\cF^{\lceil\lambda\rceil}R_m\ \text{for all }\lambda\in\RR_{\geq0}.
\end{equation}
Then one has
$$
a_{m,j}(\cF_{\NN})=\lfloor a_{m,j}(\cF)\rfloor.
$$
The next result is a simple generalization of \cite[Proposition 2.11]{BJ17}

\begin{proposition}
\label{prop:S=S-N}
If $\cF$ is a filtration on $R_m$, then
\begin{equation*}
    S_m^{(p)}(L,\cF)\geq S^{(p)}_m(L,\cF_\NN)\geq
    \begin{cases}
    S^{(p)}_m(L,\cF)-\frac{1}{m},\ p=1,\\
    S^{(p)}_m(L,\cF)-\frac{p}{m^{p-1}}S^{(1)}_m(L,\cF),\ 1<p<2,\\
    S^{(p)}_m(L,\cF)-\frac{p}{m}S^{(p-1)}_m(L,\cF),\ p\geq2.\\
    \end{cases}
\end{equation*}
\end{proposition}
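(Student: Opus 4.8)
The plan is to argue entirely at the level of jumping numbers. Abbreviate $a_j := a_{m,j}(\cF)$ for $1\le j\le d_m$; by \eqref{eq:def-F-N} the jumping numbers of the induced $\NN$-filtration are $a_{m,j}(\cF_\NN)=\lfloor a_j\rfloor$. Hence
$$
S_m^{(p)}(L,\cF)-S_m^{(p)}(L,\cF_\NN)=\frac{1}{d_m m^p}\sum_{j=1}^{d_m}\bigl(a_j^{\,p}-\lfloor a_j\rfloor^{\,p}\bigr),
$$
and since $t\mapsto t^p$ is non-decreasing on $[0,\infty)$ and $\lfloor a_j\rfloor\le a_j$, each summand is $\ge 0$; this already gives the first inequality $S_m^{(p)}(L,\cF)\ge S_m^{(p)}(L,\cF_\NN)$. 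Everything then reduces to an upper bound for this sum, and for that it is enough to estimate $a^{\,p}-\lfloor a\rfloor^{\,p}$ for a single real number $a\ge 0$, using only $0\le a-\lfloor a\rfloor<1$ together with the fact that $\lfloor a\rfloor=0$ whenever $a<1$.

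For these elementary estimates I would use $a^{\,p}-b^{\,p}=\int_b^a p\,t^{p-1}\,dt$ with $b=\lfloor a\rfloor$. When $p=1$ this integral equals $a-b<1$, so summing over $j$ gives $S_m^{(1)}(L,\cF)-S_m^{(1)}(L,\cF_\NN)<\tfrac1m$; this is essentially the argument of \cite[Proposition 2.11]{BJ17}. When $p\ge 2$, bound $t^{p-1}\le a^{p-1}$ on $[b,a]$ to get $a^{\,p}-b^{\,p}\le p\,a^{p-1}(a-b)<p\,a^{p-1}$; summing over $j$ and recognising $\tfrac{1}{d_m m^{p-1}}\sum_j a_j^{p-1}=S_m^{(p-1)}(L,\cF)$ yields the bound $\tfrac{p}{m}S_m^{(p-1)}(L,\cF)$. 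For the remaining range $1<p<2$ the exponent $p-1$ falls outside our conventions, so instead I would prove the pointwise inequality $a^{\,p}-\lfloor a\rfloor^{\,p}\le p\,a$: if $a\ge 1$ then $t^{p-1}\le a^{p-1}\le a$ on $[b,a]$ (here $p-1\le 1$ is used), hence $a^{\,p}-b^{\,p}\le p\,a(a-b)<p\,a$; if $a<1$ then $b=0$ and $a^{\,p}=a\cdot a^{p-1}\le a\le p\,a$. Summing over $j$ then gives $S_m^{(p)}(L,\cF)-S_m^{(p)}(L,\cF_\NN)\le\tfrac{p}{d_m m^p}\sum_j a_j=\tfrac{p}{m^{p-1}}S_m^{(1)}(L,\cF)$, as claimed.

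No serious obstacle is expected; the only delicate point is the case split at $p=2$, which is genuinely necessary: the function $t\mapsto t^{p-1}$ is dominated by $t\mapsto t$ near infinity exactly when $p\le 2$, so the ``$S_m^{(1)}$-type'' estimate is available only for $p<2$, while for $p\ge 2$ one must pay an extra power of the jumping number and land on $S_m^{(p-1)}$. I would also take care to carry the normalizations $\tfrac1{d_m}$ and $\tfrac1{m^p}$ correctly through all three regimes and to keep the sums over the full range $j=1,\dots,d_m$, so that the right-hand sides match those in the statement.
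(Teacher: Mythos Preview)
Your proposal is correct and follows essentially the same approach as the paper: the paper's proof consists of the single line ``This follows from the elementary inequality $(x-1)^p\ge x^p-px^{p-1}$ for $p\ge 1$ and $x\ge 1$,'' which is exactly your integral bound $a^p-\lfloor a\rfloor^p\le p\,a^{p-1}$ rephrased. Your treatment of the range $1<p<2$ is in fact more careful than the paper's sketch: the inequality $a_j^{p-1}\le a_j$ needed to pass from $\sum a_j^{p-1}$ to $S_m^{(1)}$ fails for jumping numbers $a_j\in(0,1)$, and your direct proof of $a^p-\lfloor a\rfloor^p\le p\,a$ (splitting into $a\ge 1$ and $a<1$) cleanly closes that gap.
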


\begin{proof}
This follows from the elementary inequality:
$$
(x-1)^p\geq x^p-px^{p-1}\ \text{for }p\geq1\text{ and }x\geq 1.
$$
\end{proof}

Now as in \cite[Section 2.2]{BJ17} let us fix a local system $(z_1,...,z_n)$ around a regular closed point of $X$, which then yields a Newton--Okounkov body 
$\Delta\subset\RR^n$ for $L$ and denote the Lebesgue measure on $\Delta$ by $\rho$. Then any filtration
$\cF$ of $R=R(X,L)=\oplus_{m\geq0}R_m$ induces a family of graded linear series $V_\bullet^t$ ($t\in\RR_{\geq0}$) and also a concave function $G$ on $\Delta$. More precisely, $V_\bullet^t=\oplus_{m}V^t_m$ with
$$
V_m^t:=\cF^{tm}R_m,
$$
which induces a Newton--Okounkov body $\Delta^t\subset\Delta$ and
$$
G(\alpha):=\sup\{t\in\RR_{\geq0}|\alpha\in\Delta^t\}.
$$
Then define
\begin{equation*}
    S^{(p)}(L,\cF):=\frac{1}{\vol(L)}\int_0^\infty pt^{p-1}\vol(V^t_{\bullet})dt=\frac{1}{\vol(L)}\int_0^\infty t^pd(-\vol(V^t_\bullet))=\frac{1}{\vol(\Delta)}\int_\Delta G^pd\rho
\end{equation*}
and also put
$$
T(L,\cF):=\lim_{m\rightarrow\infty}T_m(L,\cF).
$$
One then can deduce that
$$
\frac{\Gamma(p+1)\Gamma(n+1)}{\Gamma(p+n+1)}T(L,\cF)^p\leq S^{(p)}(L,\cF)\leq T(L,\cF)^p.
$$
This generalizes \cite[Lemma 2.6]{BJ17} (see also the proof of Theorem \ref{thm:p-barycenter-ineq}). A simple consequence is that
\begin{equation*}
\label{eq:T=lim-S-p}
    T(L,\cF)=\lim_{p\rightarrow\infty}S^{(p)}(L,\cF)^{1/p}.
\end{equation*}

The next result naturally generalizes Lemma 2.9, Corollary 2.10 and Proposition 2.11 in \cite{BJ17}. Since its proof is largely verbatim, we omit it.

\begin{proposition}
\label{prop:S-p-m->S-p}
The following statements hold.
\begin{enumerate}
    \item One has $S^{(p)}(L,\cF)=\lim_m S^{(p)}_m(L,\cF).$
    \item For every $\varepsilon>0$ there exists $m_0=m_0(\varepsilon)>0$ such that
    $$
    S^{(p)}_m(L,\cF)\leq(1+\varepsilon)S^{(p)}(L,\cF)
    $$
    for any $m\geq m_0$ and any linearly bounded filtration $\cF$ on $R$.
    \item If $\cF$ is a filtration on $R$, then
    $S^{(p)}(L,\cF_\NN)=S^{(p)}(L,\cF)$.
\end{enumerate}
\end{proposition}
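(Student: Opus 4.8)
The plan is to transplant the arguments behind \cite[Lemma~2.9, Corollary~2.10, Proposition~2.11]{BJ17} --- which treat the first moment $S_m(L,\cF)$ --- to the $p$-th moment, the only substantive change being that the test function $x$ is replaced by $x^p$; this remains continuous and, on the fixed interval $[0,C]$ in which all rescaled jumping numbers of a linearly bounded filtration lie, Lipschitz. Throughout I assume $\cF$ is linearly bounded, say $\cF^{Cm}R_m=\{0\}$ for all $m$ (the case of interest, and the only one in which $S^{(p)}(L,\cF)$ is finite), so that $a_{m,j}(\cF)/m\in[0,C]$ for all $j$ and the concave transform $G$ takes values in $[0,C]$.

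For (1), I would write $S^{(p)}_m(L,\cF)=\int_{\RR_{\ge0}}t^p\,d\nu_m$ with $\nu_m:=\frac{1}{d_m}\sum_{j=1}^{d_m}\delta_{a_{m,j}(\cF)/m}$, a probability measure on $[0,C]$. The content of \cite[Lemma~2.9]{BJ17}, which rests on the theory of concave transforms of filtered graded linear series, is that $\nu_m$ converges weakly to $\nu:=\frac{1}{\vol(\Delta)}G_*(\rho|_\Delta)$. Since all these are probability measures on the common compact interval $[0,C]$ and $t\mapsto t^p$ is continuous there, weak convergence upgrades to convergence of $p$-th moments, so $\lim_m S^{(p)}_m(L,\cF)=\int t^p\,d\nu=\frac{1}{\vol(\Delta)}\int_\Delta G^p\,d\rho=S^{(p)}(L,\cF)$. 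Equivalently, the proof of \cite[Lemma~2.9]{BJ17} already yields $\frac{1}{d_m}\sum_j f(a_{m,j}/m)\to\frac{1}{\vol(\Delta)}\int_\Delta f\circ G\,d\rho$ for every continuous $f$, and one simply inserts $f(x)=x^p$.

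For (3), I would combine the elementary comparison of Proposition~\ref{prop:S=S-N} with part (1). First note that $\cF_\NN$ is again a linearly bounded filtration of $R$: multiplicativity follows from $\lceil\lambda_1\rceil+\lceil\lambda_2\rceil\ge\lceil\lambda_1+\lambda_2\rceil$, and its jumping numbers are dominated by those of $\cF$. Applying Proposition~\ref{prop:S=S-N} in each degree $m$ gives $S^{(p)}_m(L,\cF)\ge S^{(p)}_m(L,\cF_\NN)\ge S^{(p)}_m(L,\cF)-E_m$, where $E_m$ equals $1/m$, $\frac{p}{m^{p-1}}S^{(1)}_m(L,\cF)$, or $\frac{p}{m}S^{(p-1)}_m(L,\cF)$ according as $p=1$, $1<p<2$, or $p\ge2$. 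Since $S^{(q)}_m(L,\cF)\le C^q$ for all $m$ and all $q\ge1$ by linear boundedness, $E_m\to0$ in every case; letting $m\to\infty$ and applying part (1) to both $\cF$ and $\cF_\NN$ forces $S^{(p)}(L,\cF)=S^{(p)}(L,\cF_\NN)$.

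The hard part is (2), the uniformity of the bound over \emph{all} linearly bounded filtrations simultaneously (the threshold $m_0$ being allowed to depend only on $\varepsilon$). Here I would follow \cite[Corollary~2.10]{BJ17}: rescaling the filtration exponents multiplies $S^{(p)}_m(L,\cF)$ and $S^{(p)}(L,\cF)$ by a common positive factor and hence preserves their ratio, which reduces the problem to filtrations with $C=1$; the essential ingredient is then the uniform Fujita-approximation estimate of \cite{BJ17} --- for each $\varepsilon>0$ there is $m_0$ with $\dim\cF^{tm}R_m\le(1+\varepsilon)\vol(V_\bullet^t)\tfrac{m^n}{n!}$ for all $m\ge m_0$, all $t\in[0,1]$, and all such $\cF$. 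Integrating this against the non-negative kernel $p\,t^{p-1}\,dt$ on $[0,1]$ and using the integral formula for $S^{(p)}$ (as in Lemma~\ref{lem:S-p-formula}, extended to filtrations) produces $S^{(p)}_m(L,\cF)\le(1+\varepsilon')S^{(p)}(L,\cF)$ with $\varepsilon'\to0$ as $\varepsilon\to0$, uniformly in $\cF$. The delicate point --- and the main obstacle --- is that this volume estimate be uniform in $t$ as well as in $\cF$; this is exactly what the corresponding step of \cite{BJ17} supplies, so the argument goes through essentially verbatim.
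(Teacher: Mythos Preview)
Your proposal is correct and matches the paper's approach exactly: the paper omits the proof entirely, stating that it ``naturally generalizes Lemma~2.9, Corollary~2.10 and Proposition~2.11 in \cite{BJ17}'' and ``is largely verbatim,'' which is precisely the transplantation you carry out by replacing the test function $x$ with $x^p$ and using linear boundedness to keep everything on a fixed compact interval. Your identification of the uniform Fujita-approximation step from \cite{BJ17} as the crux of part~(2), and your use of Proposition~\ref{prop:S=S-N} together with part~(1) for part~(3), are exactly the intended arguments.
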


Let $\Val_X$ be the set of real valuations on the function field of $X$ that are trivial on the ground field $\CC$.
Any $v\in\mathrm{Val}_X$ induces a filtration $\cF_v$ on $R$ via 
$$
\cF_v^{\lambda}R_m:=\{s\in R_m\ |\ v(s)\geq t\}
$$
for $m\in\NN$ and $\lambda\in\RR_{\geq0}$. Then put
$$
S^{(p)}_m(L,v):=S^{(p)}_m(L,\cF_v)
\text{ and }
S^{(p)}(L,v):=S^{(p)}(L,\cF_v).
$$


Note that $\cF_v$ is \emph{saturated} in the sense of \cite[Definition 4.4]{Fuj18-volume}. To be more precise, let
$$
\fb(|\cF_v^\lambda R_m|):=\mathrm{Im}\bigg(\cF^\lambda_v R_m\otimes(-mL)\rightarrow\cO_X\bigg)
$$
be the base ideal of $\cF^\lambda_v R_m$. Let $\overline{\fb(|\cF^\lambda_v R_m|)}$ denote its integral closure (i.e., $\overline{\fb(|\cF^\lambda_v R_m|)}$ is the set of elements $f\in\cO_X$ satisfying a monic equation $f^d+a_1f^{d-1}+...+a_d=0$ with $a_i\in\fb(|\cF_v^\lambda R_m|)^i$).

\begin{lemma}
For any $\lambda\in\RR_{\geq0}$ and $m\in\NN$, one has
$$
\cF^\lambda_v R_m=H^0(X,mL\otimes\fb(|\cF_v^\lambda R_m|))=H^0(X,mL\otimes\overline{\fb(|\cF^\lambda_v R_m|)}).
$$
\end{lemma}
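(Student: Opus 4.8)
The plan is to prove the two equalities $\cF^\lambda_v R_m = H^0(X, mL \otimes \fb(|\cF_v^\lambda R_m|))$ and $H^0(X, mL \otimes \fb(|\cF_v^\lambda R_m|)) = H^0(X, mL \otimes \overline{\fb(|\cF^\lambda_v R_m|)})$ separately. The first is essentially a tautology coming from the definition of the base ideal, and the second is where the valuative hypothesis does the work. Throughout I write $\fb := \fb(|\cF_v^\lambda R_m|)$ for brevity.

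For the first equality, one inclusion is formal: every section $s \in \cF^\lambda_v R_m$, viewed as a section of $mL$, vanishes along the scheme cut out by $\fb$ by the very definition of $\fb$ as the image of the evaluation map $\cF^\lambda_v R_m \otimes (-mL) \to \cO_X$, so $s \in H^0(X, mL \otimes \fb)$. This gives $\cF^\lambda_v R_m \subseteq H^0(X, mL \otimes \fb)$, and then (since integral closure only enlarges the ideal) both containments $\cF^\lambda_v R_m \subseteq H^0(X, mL \otimes \fb) \subseteq H^0(X, mL \otimes \overline{\fb})$ hold trivially. So the entire content of the lemma is the reverse inclusion $H^0(X, mL \otimes \overline{\fb}) \subseteq \cF^\lambda_v R_m$; establishing this closes the chain of equalities at once.

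To prove $H^0(X, mL \otimes \overline{\fb}) \subseteq \cF^\lambda_v R_m$, I would argue valuatively. Let $s \in H^0(X, mL \otimes \overline{\fb})$; I must show $v(s) \geq \lambda$. The key is the valuative criterion for integral closure: for any ideal $\fa$ and any (suitably normalized) valuation $w$ on the function field, one has $w(\overline{\fa}) = w(\fa) = \min_{f \in \fa} w(f)$, because integral dependence $f^d + a_1 f^{d-1} + \cdots + a_d = 0$ with $a_i \in \fa^i$ forces $w(f) \geq \min_i \tfrac{1}{i} w(a_i) \geq w(\fa)$. Apply this with $w = v$. Now $v(\fb) = \min\{v(t) : t \in \fb\}$; since $\fb$ is generated by the functions $s'/(\text{fixed trivialization of } mL)$ for $s' \in \cF^\lambda_v R_m$, and each such $s'$ has $v(s') \geq \lambda$, we get $v(\fb) \geq \lambda - v(mL\text{-trivialization})$ — more precisely, thinking of $s$ and the generators as sections of the same line bundle, the section $s$ vanishing to order $\geq v(\overline{\fb}) = v(\fb)$ relative to the generators forces $v(s) \geq \lambda$. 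Care is needed to phrase the comparison of vanishing orders of $s$ against the generators of $\fb$ correctly as sections of $mL$ versus functions; the cleanest route is to pull everything back to a birational model where $v = \ord_E$ for a prime divisor $E$ and the ideal $\overline{\fb}$ becomes locally principal (or at least where $\ord_E$ of it is computed by a single pullback), and then the statement becomes: the divisor of $\pi^*s$ dominates the $E$-part forced by $\overline{\fb}$, which has $\ord_E \geq \lambda$ by the valuative criterion applied to the generators.

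The main obstacle I anticipate is purely bookkeeping: keeping straight the twist by $mL$ so that "base ideal of a subspace of sections" and "order of vanishing of a section" are compared in compatible terms, and handling the fact that $v$ is a general real valuation rather than a divisorial one (so one cannot literally pull back to a divisor — instead one uses that $v$ restricted to the relevant ideals is computed as a minimum over generators, which is all the valuative criterion for integral closure needs). There is no hard geometry here; once the definitions are unwound, the statement reduces to the standard fact that a valuation cannot distinguish an ideal from its integral closure, combined with the definitional fact that $v(s) \geq \lambda$ for all $s$ generating $\fb$. I would cite the valuative characterization of integral closure (e.g. as in Lazarsfeld's book, or \cite[Definition 4.4]{Fuj18-volume} where saturation is introduced) and keep the argument to a few lines.
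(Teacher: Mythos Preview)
Your approach is correct and is essentially the same as the paper's: both reduce to the valuative criterion for integral closure, i.e., the fact that a valuation cannot distinguish an ideal from its integral closure. The paper packages this more cleanly by introducing the integrally closed ideal $\fa_\lambda(v):=\{f\in\cO_X\mid v(f)\geq\lambda\}$, so that $\cF^\lambda_v R_m=H^0(X,mL\otimes\fa_\lambda(v))$ by definition; then $\fb\subset\fa_\lambda(v)$ gives $\overline{\fb}\subset\overline{\fa_\lambda(v)}=\fa_\lambda(v)$, and the chain of inclusions closes immediately---this dissolves the twist-by-$mL$ and non-divisorial bookkeeping you flagged as an obstacle.
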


\begin{proof}
Put
$
\fa_\lambda(v):=\{f\in\cO_X\ |\ v(f)\geq\lambda\}.
$
It is easy to see that $\fa_\lambda(v)$ is integrally closed, i.e., $\overline{\fa_\lambda(v)}=\fa_\lambda(v)$.
Moreover by definition,
$$\cF^\lambda_v R_m=H^0(X,mL\otimes\fa_\lambda(v)).$$
Thus
\begin{equation*}
    \begin{aligned}
        \cF^\lambda_v R_m\subset H^0(X,mL\otimes\fb(|\cF_v^\lambda R_m|))\subset H^0(X,mL\otimes\overline{\fb(|\cF^\lambda_v R_m|)})
        \subset H^0(X,mL\otimes\fa_\lambda(v))=\cF^\lambda_v R_m.\\
    \end{aligned}
\end{equation*}
So we conclude.
\end{proof}

For $t\in\RR_{\geq0}$ and $l\in\NN$, set
$$
V^t_{m,l}:=H^0(X,mlL\otimes\overline{\fb(|\cF^{tm}_v R_m|)^l}).
$$
The previous lemma implies that $V^t_{m,\bullet}:=\bigoplus_{l\in\NN}V^t_{m,l}$ is a subalgebra of $V^t_{\bullet}$. Put
$$
\Tilde{S}^{(p)}_m(L,v):=\frac{1}{m^n\vol(L)}\int_0^\infty pt^{p-1}\vol(V_{m,\bullet}^t)dt.
$$
As illustrated in \cite[Section 5]{BJ17}, the graded linear series  $V^t_{m,\bullet}$ can effectively approximate $V^t_{\bullet}$. The argument therein extends to our $L^p$ setting in a straightforward way. So we record the following result, which generalizes \cite[Theorem 5.3]{BJ17}, and leave its proof to the interested reader.

\begin{theorem}
\label{thm:S-p=tilde-S-p-m}
Let $X$ be a normal projective klt variety and $L$ an ample line bundle on $X$. Then there exists a constant $C=C(X,L)$ such that
$$
0\leq S^{(p)}(L,v)-\Tilde{S}^{(p)}_m(L,v)\leq\bigg(\frac{CA(v)}{m}\bigg)^p
$$
for all $m\in\NN^*$ and all $v\in\mathrm{Val}_X$ with $A_X(v)<\infty$
\end{theorem}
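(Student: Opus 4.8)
The plan is to follow the strategy of Blum--Jonsson's proof of \cite[Theorem 5.3]{BJ17}, keeping careful track of how the exponent $p$ modifies the error estimates. Recall that for fixed $m$ the algebra $V^t_{m,\bullet}$ sits inside the graded linear series $V^t_\bullet$ induced by $\cF_v$, so that $\vol(V^t_{m,\bullet})\leq\vol(V^t_\bullet)$ for every $t$; integrating against $pt^{p-1}$ and using Lemma \ref{lem:S-p-formula} (in its filtration form) immediately gives the lower bound $\Tilde S^{(p)}_m(L,v)\leq S^{(p)}(L,v)$, i.e.\ the left inequality $0\leq S^{(p)}(L,v)-\Tilde S^{(p)}_m(L,v)$. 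The content is therefore entirely in the upper bound, which amounts to showing that $V^t_{m,\bullet}$ approximates $V^t_\bullet$ well enough, uniformly in $t$ and $v$, once $m$ is large compared to $A_X(v)$.

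For the upper bound I would first record the key comparison from \cite[Section 5]{BJ17}: there is a constant $C=C(X,L)$ so that for all $m$, all $v$ with $A_X(v)<\infty$, and all $t\geq0$,
\begin{equation*}
\vol(V^t_\bullet)\leq\frac{1}{m^n}\vol\big(V^{t(1+CA_X(v)/m)}_{m,\bullet}\big),
\end{equation*}
coming from the fact that the saturated base ideals $\overline{\fb(|\cF^{tm}_vR_m|)}$ differ from the ``honest'' truncations by a controlled amount measured by $A_X(v)/m$ (this is exactly where the saturation lemma just proved and the uniform global generation statement of \cite{BJ17} enter). Granting this, substitute $s=t(1+CA_X(v)/m)$ in the defining integral for $S^{(p)}(L,v)$:
\begin{align*}
S^{(p)}(L,v)&=\frac{1}{\vol(L)}\int_0^\infty pt^{p-1}\vol(V^t_\bullet)\,dt\\
&\leq\frac{1}{m^n\vol(L)}\int_0^\infty pt^{p-1}\vol\big(V^{t(1+CA_X(v)/m)}_{m,\bullet}\big)\,dt\\
&=\Big(1+\frac{CA_X(v)}{m}\Big)^{-p}\,\Tilde S^{(p)}_m(L,v)\\
&\leq\Tilde S^{(p)}_m(L,v).
\end{align*}
Wait --- this inequality goes the wrong way; the honest statement from \cite{BJ17} is the reverse containment $V^{t'}_{m,\bullet}\hookrightarrow V^{t'/(1+CA_X(v)/m)}_\bullet$ after rescaling, so that $\Tilde S^{(p)}_m(L,v)\leq(1+CA_X(v)/m)^p S^{(p)}(L,v)$ is \emph{not} what we want either. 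The correct route, as in \cite{BJ17}, is to bound $\vol(V^t_\bullet)-m^{-n}\vol(V^t_{m,\bullet})$ pointwise in $t$ by something like $t^{-1}\cdot(\text{const}\cdot A_X(v)/m)\cdot\vol(L)$ supported on $t\in[0,T(L,v)]$, which after multiplying by $pt^{p-1}$ and integrating produces an error of order $(CA_X(v)/m)^p$ once one uses $T(L,v)\leq C'A_X(v)$ (finiteness of $A_X(v)$ forces linear boundedness of $\cF_v$ with constant $O(A_X(v))$, cf.\ the Izumi-type estimates).

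So the main steps are: (i) reduce to the upper bound, the lower bound being the trivial inclusion; (ii) invoke the pointwise volume comparison of \cite[Section 5]{BJ17} between $V^t_\bullet$ and the rescaled $m^{-n}V^t_{m,\bullet}$, whose error is linear in $A_X(v)/m$ and vanishes outside a bounded range of $t$; (iii) integrate this pointwise estimate against $pt^{p-1}\,dt/\vol(L)$, using the bound $\tau(L,v)=T(L,v)\leq C'A_X(v)$ to convert the linear-in-$1/m$ pointwise error, after the weight $t^{p-1}$ and the length of the integration interval are accounted for, into the claimed $(CA_X(v)/m)^p$ bound (absorbing all dimensional constants into $C$). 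The main obstacle is step (iii): one must be honest about how the weight $t^{p-1}$ and the $t$-dependence of the pointwise error interact, because a crude bound gives only $O(A_X(v)/m)$ rather than the sharp $O((A_X(v)/m)^p)$; getting the $p$-th power requires noting that the discrepancy between $\cF_v$ and its level-$m$ saturated truncation is not merely small in sup-norm but is concentrated near the top jumping number, so the integral $\int pt^{p-1}(\text{error})\,dt$ is itself of the form $(\text{small})^p$. This is precisely the ``straightforward extension'' alluded to in the text, and I would present it as a short lemma isolating the pointwise comparison, then a one-line integration, rather than reproducing the entire machinery of \cite[Section 5]{BJ17}.
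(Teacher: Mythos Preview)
The paper does not supply a proof of this theorem: it says the argument of \cite[Section 5]{BJ17} ``extends to our $L^p$ setting in a straightforward way'' and leaves the details to the reader. So there is nothing to compare against directly; the question is whether your outline actually works.

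Your overall architecture is correct: the lower bound is the trivial inclusion $V^t_{m,\bullet}\subset V^t_\bullet$, and the upper bound should come from the additive shift estimate of \cite{BJ17}, namely $m^{-n}\vol(V^t_{m,\bullet})\geq\vol(V^{t+\varepsilon}_\bullet)$ with $\varepsilon=CA_X(v)/m$, followed by integration against $pt^{p-1}\,dt$ and the Izumi-type bound $T(L,v)\leq C'A_X(v)$. Your multiplicative-rescaling detour is a dead end, as you noticed, but the additive route is the right one.

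The genuine gap is your final step. You correctly observe that the crude bound produces only an error of order $A_X(v)^p/m$, not $(A_X(v)/m)^p$, and you then assert that the discrepancy is ``concentrated near the top jumping number'' so that the integral against $pt^{p-1}$ upgrades this to a $p$-th power in $1/m$. This is false. Take $X=\PP^1$, $L=\cO(1)$, $v=\ord_q$ for a point $q$; then $A_X(v)=1$, $\vol(V^t_\bullet)=1-t$, and $m^{-1}\vol(V^t_{m,\bullet})=1-\lceil tm\rceil/m$ on $[0,1]$. The pointwise error $\lceil tm\rceil/m-t$ is a sawtooth of amplitude $1/m$ spread uniformly over $[0,1]$, not concentrated anywhere, and a direct computation (Euler--Maclaurin) gives
\[
S^{(p)}(L,v)-\tilde S^{(p)}_m(L,v)=\frac{1}{2m}+O(1/m^2)
\]
for every $p\geq1$. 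For $p>1$ this cannot be bounded by $(C/m)^p$ with $C$ independent of $m$.

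In other words, your ``crude bound'' $S^{(p)}(L,v)-\tilde S^{(p)}_m(L,v)\leq p\,T(L,v)^{p-1}\cdot CA_X(v)/m\leq C_p A_X(v)^p/m$ is the honest output of the \cite{BJ17} argument, and it is also exactly what is used downstream (Section~\ref{sec:lsc} only needs $S^{(p)}-S^{(p)}_m\leq\varepsilon A_X(v)^p$ for $m$ large). The exponent placement in the displayed inequality of the theorem appears to be a typo; do not try to prove $(CA_X(v)/m)^p$ as written, because it is not true for $p>1$.
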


\section{Continuity in the big cone}
\label{sec:conti}

In this section we assume that $X$ is a klt projective variety and $L$ a big $\RR$-line bundle on $X$ (we refer to \cite{Laz-bookI} for the positivity notions of line bundles). As before, fix some $p\geq1$.
Recall that its $\delta^{(p)}$-invariant is given by
\begin{equation*}
    \delta^{(p)}(L):=\inf_F\frac{A_X(F)}{S^{(p)}(L,F)^{1/p}},
\end{equation*}
where $F$ runs through all the prime divisors over $X$.
The goal is to show Theorem \ref{thm:cont}. The proof is a slightly modified version of the one in \cite[Section 4]{Zhang20}. For the reader's convenience we give the details.

\begin{lemma}
\label{lem:delta-comparison}
There exists $\varepsilon_0$ only depending $n$ and $p$ such that the following holds. Given any big $\RR$-line bundle $L$ any $\varepsilon\in(0,\varepsilon_0)$, assume that there is a big $\RR$-line bundle $L_\varepsilon$ such that
$$
\text{both }(1+\varepsilon)L-L_\varepsilon\text{ and }L_\varepsilon-(1-\varepsilon)L\text{ are big,}
$$
we have
$$
\delta^{(p)}(L+\varepsilon L_\varepsilon)\leq\delta^{(p)}(L)\leq\delta^{(p)}(L-\varepsilon L_\varepsilon).
$$
\end{lemma}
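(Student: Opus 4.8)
The plan is to reduce everything to a monotonicity statement for $S^{(p)}(\cdot,F)$ in the line bundle, then normalize by $A_X(F)$ and take the infimum over $F$. First I would recall from Lemma \ref{lem:S-p-formula} (extended to $\RR$-line bundles, which is unproblematic since $\vol$ is continuous and homogeneous of degree $n$ on the big cone) that $S^{(p)}(M,F)=\frac{1}{\vol(M)}\int_0^{\tau(M,F)} p x^{p-1}\vol(M-xF)\,dx$ for any big $\RR$-line bundle $M$. The key elementary observation is a sandwiching of volumes: if $M_1-M_2$ and $M_2$ are big (so $M_2\leq M_1$ in the big-cone partial order), then after the substitution $x\mapsto$ (scaled) one gets a comparison between $S^{(p)}(M_1,F)$ and $S^{(p)}(M_2,F)$ up to the ratio $\vol(M_1)/\vol(M_2)$. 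Concretely, for $M_2 = (1-\varepsilon)L$ and $M_1 = L$ one has the trivial scaling identity $S^{(p)}((1-\varepsilon)L,F) = (1-\varepsilon)^p S^{(p)}(L,F)$, and since $L_\varepsilon - (1-\varepsilon)L$ is big we have $\vol(L-\varepsilon L_\varepsilon)\le\vol(L-\varepsilon(1-\varepsilon)L)=(1-\varepsilon+\varepsilon^2)^n\vol(L)$ type bounds; combining such volume monotonicity with monotonicity of $\tau$ and of $\vol(\cdot - xF)$ under the big-cone order yields the two-sided comparison.

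The heart of the argument is therefore to establish, for $F$ fixed, an inequality of the shape
$$
(1-C\varepsilon)\, S^{(p)}(L,F) \;\le\; S^{(p)}(L+\varepsilon L_\varepsilon,F) \;\le\; (1+C\varepsilon)\, S^{(p)}(L,F),
$$
and symmetrically with $L-\varepsilon L_\varepsilon$, where $C$ depends only on $n$ and $p$, \emph{uniformly in $F$}. This uniformity is what will make the passage to the infimum legitimate. To get it, I would bound the three ingredients separately: (i) $\tau(L\pm\varepsilon L_\varepsilon,F)$ is squeezed between $\tau((1\mp\varepsilon')L,F)=(1\mp\varepsilon')\tau(L,F)$ for a suitable $\varepsilon'=\varepsilon'(\varepsilon)$, using that $(1+\varepsilon)L - L_\varepsilon$ and $L_\varepsilon-(1-\varepsilon)L$ big forces $\varepsilon L_\varepsilon$ to lie between $\varepsilon(1-\varepsilon)L$ and $\varepsilon(1+\varepsilon)L$ in the big-cone order; (ii) $\vol(L\pm\varepsilon L_\varepsilon - xF)$ is comparable to $\vol(L-xF)$ with multiplicative error controlled by a power of $(1\pm C\varepsilon)$, again by big-cone monotonicity of $\vol$ and its degree-$n$ homogeneity; (iii) the normalizing factor $\vol(L\pm\varepsilon L_\varepsilon)^{-1}$ is likewise controlled. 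Multiplying these through the integral formula and absorbing everything into a single constant gives the claim. Since $\delta^{(p)}(M) = \inf_F A_X(F)/S^{(p)}(M,F)^{1/p}$ and the comparison is uniform in $F$, taking $p$-th roots and infima yields $\delta^{(p)}(L+\varepsilon L_\varepsilon) \le \delta^{(p)}(L) \le \delta^{(p)}(L-\varepsilon L_\varepsilon)$ after possibly shrinking $\varepsilon_0$ so that all the error factors have the correct sign.

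The main obstacle I anticipate is precisely making step (i)–(iii) \emph{uniform in $F$}: a priori $\tau(L,F)$ and the shape of the function $x\mapsto\vol(L-xF)$ vary wildly with $F$ (indeed $A_X(F)/S^{(p)}(L,F)^{1/p}$ is scale-invariant in $F$, so only the ``profile'' matters), so one must phrase the volume comparisons in a way that is invariant under the relevant rescaling and does not degenerate as $\tau(L,F)\to\infty$ relative to $A_X(F)$. The clean way around this is to work with the normalized variable $y = x/\tau(L,F)$ (or better, to compare the graded linear series / Okounkov bodies directly, as in \cite[Section 4]{Zhang20}), where the big-cone containments translate into \emph{containment of Okounkov bodies} $\Delta(L-xF)\subseteq\Delta((1-\varepsilon')L - xF)$ up to scaling, and then the estimate becomes a purely convex-geometric statement about how the $p$-th moment of a bounded concave function on a convex body changes under controlled perturbation of the body. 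A secondary minor technical point is handling $\RR$-line bundles rather than $\QQ$-line bundles in the definition of $S^{(p)}(\cdot,F)$, but this is routine given continuity of $\vol$ and the integral formula; I would simply remark that the formula of Lemma \ref{lem:S-p-formula} persists in the big $\RR$-cone by approximation. Everything else is bookkeeping with the explicit integral, so once the uniform volume comparison is in hand the lemma follows in a few lines.
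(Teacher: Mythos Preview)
Your approach is the same as the paper's in spirit—reduce to a comparison of $S^{(p)}(\cdot,F)$ via big-cone monotonicity of $\vol$ and homogeneity—but the way you state the target inequality hides the one point that makes the lemma work, and as written your conclusion does not follow.

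You aim for
\[
(1-C\varepsilon)\,S^{(p)}(L,F)\ \le\ S^{(p)}(L+\varepsilon L_\varepsilon,F),
\]
and then say you will pass to $\delta^{(p)}$ ``after possibly shrinking $\varepsilon_0$ so that all the error factors have the correct sign''. But a lower bound of $(1-C\varepsilon)$ on $S^{(p)}$ gives only $\delta^{(p)}(L+\varepsilon L_\varepsilon)\le (1-C\varepsilon)^{-1/p}\,\delta^{(p)}(L)$, and $(1-C\varepsilon)^{-1/p}>1$ for every $\varepsilon>0$; no shrinking of $\varepsilon_0$ fixes this. What you actually need is that the multiplicative factor in front of $S^{(p)}(L,F)$ is \emph{at least $1$}, not merely $1-O(\varepsilon)$. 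If you carry out your own steps (i)--(iii) with the precise sandwiches your hypotheses give (namely $(1+\varepsilon-\varepsilon^2)L \le L+\varepsilon L_\varepsilon \le (1+\varepsilon+\varepsilon^2)L$ in the big-cone order), you land exactly on the paper's factor
\[
\Big(\tfrac{1+\varepsilon-\varepsilon^2}{1+\varepsilon+\varepsilon^2}\Big)^{n}\,(1+\varepsilon-\varepsilon^2)^{p},
\]
and the crucial observation is that this exceeds $1$ for $\varepsilon$ small (the $(1+\varepsilon-\varepsilon^2)^p$ term contributes $+p\varepsilon$ at first order, while the ratio contributes only $-O(\varepsilon^2)$). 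That is the entire content of $\varepsilon_0=\varepsilon_0(n,p)$.

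Your worry about uniformity in $F$ and the proposed detour through Okounkov bodies and the normalized variable $y=x/\tau(L,F)$ are unnecessary: once you track the constants as above, the comparison factor is visibly independent of $F$, so the passage to the infimum is automatic. The proof is three lines of calculation with the integral formula; no convex-geometric input beyond monotonicity and degree-$n$ homogeneity of $\vol$ is needed.
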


\begin{proof}
We only prove $\delta^{(p)}(L+\varepsilon L_\varepsilon)\leq\delta^{(p)}(L)$, since the other part follows in a similar manner.
Let $F$ be any prime divisor over $X$. It suffices to show
$$
S^{(p)}(L+\varepsilon L_\varepsilon,F)\geq S^{(p)}(L,F).
$$
To this end, we calculate as follows:
\begin{equation*}
    \begin{split}
        S^{(p)}(L+\varepsilon L_\varepsilon,F)&=\frac{1}{\vol(L+\varepsilon L_\varepsilon)}\int_0^\infty px^{p-1}\vol(L+\varepsilon L_\varepsilon-xF)dx\\
        &\geq\frac{1}{\vol(L+(\varepsilon+\varepsilon^2)L)}\int_0^\infty px^{p-1}\vol(L+(\varepsilon-\varepsilon^2)L-xF)dx\\
        &=\bigg(\frac{1+\varepsilon-\varepsilon^2}{1+\varepsilon+\varepsilon^2}\bigg)^n\cdot S^{(p)}\bigg((1+\varepsilon-\varepsilon^2)L,F\bigg)\\
        &=\bigg(\frac{1+\varepsilon-\varepsilon^2}{1+\varepsilon+\varepsilon^2}\bigg)^n\cdot(1+\varepsilon-\varepsilon^2)^p\cdot S^{(p)}(L,F).\\
    \end{split}
\end{equation*}
By choosing $\varepsilon$ small enough we can arrange that
$$
\bigg(\frac{1+\varepsilon-\varepsilon^2}{1+\varepsilon+\varepsilon^2}\bigg)^n\cdot(1+\varepsilon-\varepsilon^2)^p\geq1.
$$
This completes the proof.
\end{proof}

\begin{proof}[Proof of Theorem \ref{thm:cont}]

Let $L$ be a big $\RR$-line bundle. Fix any auxiliary $\RR$-line bundle $S\in N^1(X)_\RR$. We need to show that, for any small $\varepsilon>0$, there exists $\gamma>0$ such that
$$
(1-\varepsilon)\delta^{(p)}(L)\leq\delta^{(p)}(L+\gamma S)\leq(1+\varepsilon)\delta^{(p)}(L).
$$
Here $L+\gamma S$ is always assumed to be big (by choosing $\gamma$ sufficiently small). Notice that for any $\varepsilon>0$, we can write
$$
L+\gamma S=\frac{1}{1+\varepsilon}\bigg(L+\varepsilon\big(L+\frac{(1+\varepsilon)\gamma}{\varepsilon}S\big)\bigg).
$$
Put
$$
L_\varepsilon:=L+\frac{(1+\varepsilon)\gamma}{\varepsilon}S.
$$
Then by choosing $\gamma$ small enough, we can assume that
$$
\text{both }(1+\varepsilon)L-L_\varepsilon\text{ and }L_\varepsilon-(1-\varepsilon)L\text{ are big.}
$$
So from the scaling property (easy to verify from the definition):
$$
\delta^{(p)}(\lambda L)=\lambda^{-1}\delta^{(p)}(L)\ \text{for }\lambda>0
$$
and Lemma \ref{lem:delta-comparison}, it follows that
$$
\delta^{(p)}(L+\gamma S)=(1+\varepsilon)\delta^{(p)}(L+\varepsilon L_\varepsilon)\leq(1+\varepsilon)\delta^{(p)}(L).
$$
We can also write
$$
L+\gamma S=\frac{1}{1-\varepsilon}\bigg(L-\varepsilon\big(L-\frac{(1-\varepsilon)\gamma}{\varepsilon}S\big)\bigg).
$$
Then a similar treatment as above yields
$$
\delta^{(p)}(L+\gamma S)\geq(1-\varepsilon)\delta^{(p)}(L).
$$
In conclusion, for any small $\varepsilon>0$, by choosing $\gamma$ to be sufficiently small, we have
$$
(1-\varepsilon)\delta^{(p)}(L)\leq\delta^{(p)}(L+\gamma S)\leq(1+\varepsilon)\delta^{(p)}(L).
$$
This completes the proof.
\end{proof}

\begin{remark}
Actually there is a more elegant proof of Theorem \ref{thm:cont} using the following argument provided by an anonymous referee: consider
$$
\bar{\delta}^{(p)}(L):=\frac{\delta^{(p)}(L)}{\vol(L)^{1/p}}=\inf_F\frac{A_X(F)}{\big(\int_0^{\tau(L,F)}px^{p-1}\vol(L-xF)dx\big)^{1/p}}.
$$
Observe that $\bar{\delta}^{(p)}(\cdot)$ satisfies:
\begin{enumerate}
    \item for any effective divisor $D$, one has $\bar{\delta}^{(p)}(L+D)\leq\bar{\delta}^{(p)}(L)$ since  $\vol(L+D-xF)\geq\vol(L-xF)$ for any $F$ over $X$ and any $x\geq0$; 
    
    \item  for any $\lambda\in\RR_{>0}$ one has
    $\bar{\delta}^{(p)}(\lambda L)=\lambda^{-\frac{n+p}{p}}\bar{\delta}^{(p)}(L)$.
\end{enumerate}
It is a well-known fact that these properties imply the continuity of $\bar{\delta}^{(p)}(\cdot)$ (the previous proof of Theorem \ref{thm:cont} is actually a variant of this fact). Now since both $\bar{\delta}^{(p)}(\cdot)$ and $\vol(\cdot)$ are continuous, so is $\delta^{(p)}(\cdot)$.
\end{remark}

\section{Semi-continuity in families}
\label{sec:lsc}
In this section we prove Theorem \ref{thm:delta-m-p-lsc}.
Let $(X,L)$ be a polarized klt pair.
We set
\begin{equation}
    \label{eq:def-tilde-delta-p}
    \Tilde{\delta}^{(p)}(L):=\inf_{v}\frac{A_X(v)}{S^{(p)}(L,v)^{1/p}}
\end{equation}
where $v$ runs through all the valuations with $A_X(v)<\infty$. 

By \cite[Theorem C]{BJ17}, $\Tilde{\delta}^{(p)}=\delta^{(p)}$ for $p=1$ and $\infty$, the main reason being that both $\alpha$ and $\delta$-invariants can be defined in terms of the log canonical threshold of certain divisors. However to show that $\Tilde{\delta}^{(p)}=\delta^{(p)}$ holds for general $p$ is more tricky; see Proposition \ref{prop:tilde-delta-p=delta-p}. Leaving this issue aside for the moment, we show that the semi-continuity established by Blum--Liu \cite[Theorem B]{BL18b} holds for $\Tilde{\delta}^{(p)}$ as well.

\begin{theorem}
\label{thm:delta-p-semi-continuous}
Let $\pi: X\rightarrow T$ is a projective family of varieties and $L$ is a $\pi$-ample Cartier divisor on $X$. Assume that $T$ is normal, $X_t$ is klt for all $t\in T$ and $K_{X/T}$ is $\QQ$-Cartier. Then the function
$$
T\ni t\mapsto\Tilde{\delta}^{(p)}(X_t,L_t)
$$
is lower semi-continuous.
\end{theorem}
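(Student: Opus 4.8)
The plan is to reduce Theorem~\ref{thm:delta-p-semi-continuous} to the Blum--Liu lower semi-continuity of $\delta$ by reinterpreting $\widetilde\delta^{(p)}$ as an ordinary $\delta$-type invariant of an auxiliary family, and where that fails, to mimic the Blum--Liu proof directly in the $L^p$-setting. First I would recall that for a fixed valuation $v$ on $X_t$ with $A_{X_t}(v)<\infty$, the quantity $S^{(p)}(L_t,v)^{1/p}$ is, up to the factor $\vol(L_t)^{1/p}$, the $L^p$-norm of the concave transform $G_v$ on the Okounkov body $\Delta(L_t)$, by Lemma~\ref{lem:S-p-formula} and its filtration analogue. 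The key point is that $\widetilde\delta^{(p)}$ can be computed by the approximants $\widetilde S^{(p)}_m(L_t,v)$ of Theorem~\ref{thm:S-p=tilde-S-p-m}, which are controlled uniformly in $v$ with error $O((A(v)/m)^p)$; this is exactly the device Blum--Liu use for $p=1$.

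The main steps, in order, would be: (1) Establish a ``finite level'' lower bound: for each $m$, define $\widetilde\delta^{(p)}_m(X_t,L_t):=\inf_v A_{X_t}(v)/\widetilde S^{(p)}_m(L_t,v)^{1/p}$ and show, via Theorem~\ref{thm:S-p=tilde-S-p-m}, that $\widetilde\delta^{(p)}(X_t,L_t)=\lim_m\widetilde\delta^{(p)}_m(X_t,L_t)$, with the convergence controlled so that $\widetilde\delta^{(p)}\ge\widetilde\delta^{(p)}_m(1+o(1))^{-1}$ type estimates hold uniformly. (2) For fixed $m$, observe that $\widetilde\delta^{(p)}_m(X_t,L_t)$ is governed by log canonical thresholds: the jumping numbers $a_{m,j}$ of the filtration $\cF_v$ on $R_m(X_t)$ are realized by the base ideals $\fb(|\cF^{tm}_v R_m|)$, which are comparable (by the saturation lemma proved in the excerpt) to the valuation ideals $\fa_\lambda(v)$. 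This lets one rewrite $\widetilde\delta^{(p)}_m$ as an infimum over $m$-basis-type data of an expression involving $\lct$, now weighted by the $p$-th powers of the jumping numbers rather than their arithmetic mean. (3) Invoke the lower semi-continuity of log canonical thresholds in families (Ambro/de Fernex--Mustață) applied to the relative construction, exactly as in \cite{BL18b}, to conclude that each $t\mapsto\widetilde\delta^{(p)}_m(X_t,L_t)$ is lower semi-continuous, and then take the supremum/limit over $m$, using that a limit of lsc functions with the uniform control from step~(1) remains lsc.

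The technical heart — step~(2) — is where the $L^p$ weighting genuinely complicates matters. For $p=1$, $\delta_m$ is literally $\inf_D\lct(X_t,D)$ over $m$-basis type divisors $D$, so Blum--Liu's argument is a clean application of the constructibility and semi-continuity of $\lct$ for the universal basis-type divisor over the relevant parameter space (a product of Grassmannians / flag varieties over $T$). For general $p$, the invariant $\widetilde S^{(p)}_m(L,v)$ is not the log canonical threshold of a single divisor but a power-mean of the vanishing data along the flag $\cF^\bullet_v R_m$; so I would need to package the $m$-level information as a point in a flag-variety bundle $\mathrm{Fl}(R_m)\to T$, construct over it the family of ideals $\fb(|\cF^{jm/m}R_m|)$ (equivalently the associated ``weighted'' $\RR$-divisor or the full flag of divisors), and prove that the map sending such a point to $\inf_{\text{flags}} A/\widetilde S^{(p)}_m{}^{1/p}$ is lower semi-continuous using semi-continuity of lct along each graded piece together with the fact that $\widetilde S^{(p)}_m$ is a \emph{continuous, monotone} function of the finitely many jumping numbers. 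I expect that the main obstacle is precisely checking that this ``weighted flag'' version of the Blum--Liu semi-continuity goes through — in particular that $\widetilde\delta^{(p)}_m$ is realized over a proper (or at least finite-type) parameter space so that one may pass from pointwise lct semi-continuity to semi-continuity of the infimum — after which assembling the limit over $m$ via Theorem~\ref{thm:S-p=tilde-S-p-m} is routine.
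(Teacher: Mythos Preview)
Your strategy is essentially the paper's: it too mimics Blum--Liu directly in the $L^p$ setting, proving uniform convergence of a level-$m$ approximant (via Theorem~\ref{thm:S-p=tilde-S-p-m}) and then lower semi-continuity of that approximant by parametrizing filtrations of $R_m$ and invoking semi-continuity of $\lct$. The one place your outline is looser than the paper is your step~(2): you define $\widetilde\delta^{(p)}_m$ using $\widetilde S^{(p)}_m$ but then analyze it via the jumping numbers $a_{m,j}$, which encode $S^{(p)}_m$; the paper keeps these separate by setting $\widetilde\delta^{(p)}_m:=\inf_v A_X(v)/S^{(p)}_m(L,v)^{1/p}$ and introducing the auxiliary invariant $\hat{\widetilde\delta}^{(p)}_m(L):=\inf_{\cF}\lct(X,\fb_\bullet(\hat\cF))/S^{(p)}_m(L,\cF)^{1/p}$ (infimum over $\NN$-filtrations $\cF$ of $R_m$ with $T_m(\cF)\le 1$, $\hat\cF$ the filtration of $R$ they generate), which is exactly your ``weighted flag'' object and whose lower semi-continuity is verbatim \cite[Proposition~6.4]{BL18b}.
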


In what follows we give a sketched proof.
To justify that the argument in \cite{BL18b} honestly extends to our setting, we need to spell out the main ingredients used in their proof. First of all, generalizing \cite[Proposition 4.10]{BL18b}, it is straightforward to obtain that
$$
\Tilde{\delta}^{(p)}(L)=\inf_\cF\frac{\lct(X,\fb_\bullet(\cF))}{S^{(p)}(L,\cF)^{1/p}},
$$
where $\cF$ runs through all non-trivial linearly bounded filtrations of $R$ and $\fb_\bullet(\cF)$ denote the graded ideal associated to $\cF$ (cf. \cite[Section 3.6]{BJ17}). Second, we need to introduce a quantized version of $\Tilde{\delta}^{(p)}$ by putting
$$
\Tilde{\delta}^{(p)}_m(L):=\inf_v\frac{A_X(v)}{S^{(p)}_m(L,v)^{1/p}},
$$
where $v$ runs through all the valuations with $A_X(v)<\infty$. Then by Proposition \ref{prop:S-p-m->S-p}, we have (as in \cite[Theorem 4.4]{BJ17})
\begin{equation}
    \label{eq:lim-tilde-delta-p-m}
    \Tilde{\delta}^{(p)}(L)=\lim_{m\rightarrow\infty}\Tilde{\delta}^{(p)}_m(L).
\end{equation}
Meanwhile, we also need
$$
\hat{\Tilde{\delta}}^{(p)}_m(L):=\inf_{\cF}\frac{\lct(X,\fb_\bullet(\hat{\cF}))}{S_m^{(p)}(L,\cF)^{1/p}}
$$
where $\cF$ runs through all non-trivial $\NN$-filtration of $R_m$ with $T_m(\cF)\leq1$ and $\hat{\cF}$ denote the filtration of $R$ generated by $\cF$ (cf. \cite[Definition 3.18]{BL18b}). Then combining Proposition \ref{prop:S=S-N} with the argument of \cite[Proposition 4.17]{BL18b}, we derive that
\begin{equation}
    \label{eq:delta-m-approxi}
    \bigg(\frac{1}{\Tilde{\delta}_m^{(p)}(L)}\bigg)^p-\frac{p}{m}\bigg(\frac{1}{\alpha(L)}\bigg)^p\leq\bigg(\frac{1}{\hat{\Tilde{\delta}}^{(p)}_m(L)}\bigg)^p\leq\bigg(\frac{1}{\Tilde{\delta}_m^{(p)}(L)}\bigg)^p.
\end{equation}

Now proceeding as in \cite{BL18b}, to conclude Theorem \ref{thm:delta-p-semi-continuous}, it  suffices to establish the following two results, which extend Theorem 5.2 and Proposition 6.4 in \cite{BL18b}.

\begin{theorem}
\label{thm:hat-delta-m<delta-m}
Let $\pi:X\rightarrow T$ be a projective $\QQ$-Gorenstein family of klt projective varieties over a normal base $T$ and $L$ a $\pi$-ample Cartier divisor on $X$. For each $\varepsilon>0$ there exits a positive integer $M=M(\varepsilon)$ such that
$$
\hat{\Tilde{\delta}}_m^{(p)}(X_t,L_t)-\Tilde{\delta}^{(p)}(X_t,L_t)\leq\varepsilon
$$
for all positive integer $m$ divisible by $M$ and $t\in T$.
\end{theorem}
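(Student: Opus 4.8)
The plan is to follow the strategy of \cite[Section~5]{BL18b}: the approximation inequality \eqref{eq:delta-m-approxi} already controls $\hat{\Tilde{\delta}}^{(p)}_m$ in terms of $\Tilde{\delta}^{(p)}_m$ and $\alpha$, so the real task is to show that the convergence $\Tilde{\delta}^{(p)}_m(X_t,L_t)\to\Tilde{\delta}^{(p)}(X_t,L_t)$ is \emph{uniform} over $t\in T$, and then to unwind \eqref{eq:delta-m-approxi}. Throughout I use the scale invariance of all the relevant quantities under $v\mapsto cv$ to normalize near-minimizing valuations by $A_{X_t}(v)=1$.

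First I would record two uniform a priori bounds on the fibers. Since $\pi\colon X\to T$ is a $\QQ$-Gorenstein family of klt projective varieties with $L$ relatively ample and Cartier and $T$ Noetherian, the lower semicontinuity of $\alpha(\cdot)$ in families together with the positivity $\alpha(X_t,L_t)>0$ for klt fibers gives a uniform lower bound $\alpha_0:=\inf_{t\in T}\alpha(X_t,L_t)>0$ (quasi-compactness of $T$ turns the pointwise positivity into a uniform one). Likewise, testing $\Tilde{\delta}^{(p)}$ against a fixed divisor on each fiber and using constructibility of the data over $T$ (equivalently, boundedness of the family) yields a uniform upper bound $\Tilde{\delta}^{(p)}(X_t,L_t)\le D_0<\infty$. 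Combined with the barycenter inequality of Theorem~\ref{thm:p-barycenter-ineq}, these bounds will let me control the error terms appearing below independently of $t$: for a near-minimizer $v$ of $\Tilde{\delta}^{(p)}(X_t,L_t)$ with $A_{X_t}(v)=1$, the relation $S^{(p)}(L_t,v)^{1/p}\le\tau(L_t,v)\le A_{X_t}(v)/\alpha_0$ and $A_{X_t}(v)/S^{(p)}(L_t,v)^{1/p}\le D_0+1$ pin $S^{(p)}(L_t,v)$ between two positive constants depending only on $n,p,\alpha_0,D_0$.

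The heart of the matter is a family version of Theorem~\ref{thm:S-p=tilde-S-p-m}: that the constant $C$ there may be chosen independent of $t$, so that
$$
S^{(p)}(X_t,L_t,v)-\bigl(C\,A_{X_t}(v)/m\bigr)^p\le S^{(p)}_m(X_t,L_t,v)\le\bigl(1+\varepsilon\bigr)S^{(p)}(X_t,L_t,v)
$$
for all $t\in T$, all $m$, and all $v$ with $A_{X_t}(v)<\infty$, where the right inequality is the uniform form of Proposition~\ref{prop:S-p-m->S-p}(2). This is exactly the Noetherian stratification argument of \cite[Section~5]{BL18b} in the case $p=1$ — one cuts $T$ into finitely many locally closed pieces over which the base ideals $\fb(|\cF_v^{\bullet}R_m|)$, their integral closures, and the approximating algebras $V^t_{m,\bullet}$ spread out uniformly — and since Theorem~\ref{thm:S-p=tilde-S-p-m} has precisely the same shape as \cite[Theorem~5.3]{BJ17} (with $p$-th powers of the errors), the same proof applies; this uniformity is the step I expect to be the main obstacle. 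Granting it, and normalizing a near-minimizer $v$ of $\Tilde{\delta}^{(p)}(X_t,L_t)$ by $A_{X_t}(v)=1$ as above, one gets $S^{(p)}_m(X_t,L_t,v)\ge S^{(p)}(X_t,L_t,v)\bigl(1-(C(D_0+1)/m)^p\bigr)$, hence
$$
\Tilde{\delta}^{(p)}_m(X_t,L_t)\le\frac{\Tilde{\delta}^{(p)}(X_t,L_t)+\eta}{\bigl(1-(C(D_0+1)/m)^p\bigr)^{1/p}},
$$
so that for every $\varepsilon>0$ there is $M_1=M_1(\varepsilon)$, independent of $t$, with $\Tilde{\delta}^{(p)}_m(X_t,L_t)\le\Tilde{\delta}^{(p)}(X_t,L_t)+\varepsilon/2$ whenever $m\ge M_1$.

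Finally I would feed this into \eqref{eq:delta-m-approxi}. Using $\alpha(X_t,L_t)\ge\alpha_0$ and the previous step, for $m\ge M_1$ one has
$$
\Bigl(\tfrac{1}{\hat{\Tilde{\delta}}^{(p)}_m(X_t,L_t)}\Bigr)^p\ge\Bigl(\tfrac{1}{\Tilde{\delta}^{(p)}(X_t,L_t)+\varepsilon/2}\Bigr)^p-\frac{p}{m\,\alpha_0^{p}},
$$
and since $\Tilde{\delta}^{(p)}(X_t,L_t)+\varepsilon/2\le D_0+1$, the right-hand side stays $\ge\tfrac12(D_0+1)^{-p}>0$ once $m\ge 2p\,\alpha_0^{-p}(D_0+1)^p$; for such $m$ this gives
$$
\hat{\Tilde{\delta}}^{(p)}_m(X_t,L_t)\le\Bigl[\bigl(\Tilde{\delta}^{(p)}(X_t,L_t)+\tfrac{\varepsilon}{2}\bigr)^{-p}-p\,m^{-1}\alpha_0^{-p}\Bigr]^{-1/p},
$$
whose right-hand side converges, uniformly in $t$ (all constants depend only on $n,p,\alpha_0,D_0,\varepsilon$), to $\Tilde{\delta}^{(p)}(X_t,L_t)+\varepsilon/2\le\Tilde{\delta}^{(p)}(X_t,L_t)+\varepsilon$. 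Choosing $M=M(\varepsilon)$ so that $m$ divisible by $M$ meets all the thresholds above — the divisibility being imposed, as in \cite{BL18b}, so that the generated filtrations $\hat{\cF}$ and the Veronese subalgebra $R^{(m)}$ behave correctly in the definition of $\hat{\Tilde{\delta}}^{(p)}_m$ — then yields $\hat{\Tilde{\delta}}^{(p)}_m(X_t,L_t)-\Tilde{\delta}^{(p)}(X_t,L_t)\le\varepsilon$ for all $t\in T$ and all such $m$, as required.
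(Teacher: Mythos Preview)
Your proposal is correct and follows essentially the same route as the paper: first establish the uniform intermediate estimate $\Tilde{\delta}^{(p)}_m(X_t,L_t)-\Tilde{\delta}^{(p)}(X_t,L_t)\le\varepsilon$ by proving $S^{(p)}(L_t,v)-S^{(p)}_m(L_t,v)\le\varepsilon\,A_{X_t}(v)^p$ via a family version of Theorem~\ref{thm:S-p=tilde-S-p-m} (i.e.\ the Noetherian stratification argument of \cite[Theorem~5.13]{BL18b}), and then feed this into \eqref{eq:delta-m-approxi} to conclude. You are more explicit than the paper about the uniform constants $\alpha_0$ and $D_0$ needed to make the passage through \eqref{eq:delta-m-approxi} work independently of $t$, but this is exactly what the paper leaves implicit when it writes ``Then using \eqref{eq:delta-m-approxi}, we conclude.''
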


\begin{proposition}
\label{prop:hat-delta-m-lsc}
Let $\pi:X\rightarrow T$ be a projective $\QQ$-Gorenstein family of klt projective varieties over a normal base $T$ and $L$ a $\pi$-ample Cartier divisor on $X$. For $m\gg0$, the function $T\ni t\mapsto \hat{\Tilde{\delta}}^{(p)}_m(X_t,L_t)$ is lower semi-continuous and takes finitely many values.
\end{proposition}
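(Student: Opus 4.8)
The plan is to adapt the proof of \cite[Proposition 6.4]{BL18b}. The key point I would exploit is that the numerator $\lct(X_t,\fb_\bullet(\hat{\cF}))$ in the definition of $\hat{\Tilde{\delta}}^{(p)}_m$ does not depend on $p$, so the entire $p$-dependence is concentrated in $S^{(p)}_m(L_t,\cF)$, which will turn out to be constant along each stratum of a suitable parameter space. In particular the only genuinely new work beyond \cite{BL18b} is a bookkeeping of $S^{(p)}_m$; everything involving log canonical thresholds can be quoted verbatim.

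First I would fix $m\gg0$ so that $E_m:=\pi_*\mathcal{O}_X(mL)$ is locally free and its formation commutes with base change, whence $E_{m,t}=H^0(X_t,mL_t)=:R_{m,t}$ for every $t$; its rank $d_m$ is constant on each connected component of $T$, which is irreducible because $T$ is normal, so I may assume $T$ connected. The observation to use is that a non-trivial $\NN$-filtration $\cF$ of $R_{m,t}$ with $T_m(\cF)\le1$ is \emph{literally} the same datum as a flag $R_{m,t}=\cF^0R_{m,t}\supseteq\cF^1R_{m,t}\supseteq\cdots\supseteq\cF^mR_{m,t}\supseteq\cF^{m+1}R_{m,t}=0$ of subspaces (recall $\cF^\lambda=\cF^{\lceil\lambda\rceil}$, and $T_m\le1$ forces the length-$m$ truncation). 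Hence, for each admissible dimension vector $\mathbf{d}=(d^{(0)},\dots,d^{(m)})$ with $d_m=d^{(0)}>d^{(1)}\ge\cdots\ge d^{(m)}\ge0$ (the strict inequality encoding non-triviality), I would form the relative partial flag bundle $\cW_{\mathbf{d}}\to T$ whose fibre over $t$ parametrizes such flags with $\dim\cF^jR_{m,t}=d^{(j)}$. This is projective over $T$, there are only finitely many admissible $\mathbf{d}$, and every non-trivial $\NN$-filtration of $R_{m,t}$ with $T_m\le1$ lies in exactly one fibre of exactly one $\cW_{\mathbf{d}}$.

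The next step is to observe, via Lemma \ref{lem:S-p-formula}, that $S^{(p)}_m(L_t,\cF)=\frac{1}{d_m}\sum_{j=1}^m(j/m)^p\big(d^{(j)}-d^{(j+1)}\big)=:s_{\mathbf{d}}^{(p)}$ (with $d^{(m+1)}:=0$) depends only on $\mathbf{d}$, is positive, and takes only finitely many values. Meanwhile $\hat{\cF}_w$ and its graded ideal sequence $\fb_\bullet(\hat{\cF}_w)$ on $X_{\pi(w)}$ are manufactured from the finitely many base ideals $\fb(|\cF^jR_{m,t}|)$, which vary in an algebraic family over $\cW_{\mathbf{d}}$; the assertion that $w\mapsto\lct\big(X_{\pi(w)},\fb_\bullet(\hat{\cF}_w)\big)$ is lower semi-continuous and takes finitely many values on $\cW_{\mathbf{d}}$ is then precisely \cite[Section 6]{BL18b} applied to our family (this is where $m\gg0$, the normality of $T$, the kltness of the fibres and the $\QQ$-Cartierness of $K_{X/T}$ enter, and it is identical to \cite{BL18b} since it does not see $p$). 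Finally, since $\cW_{\mathbf{d}}\to T$ is proper, the image of each closed sublevel set $\{w:\lct(X_{\pi(w)},\fb_\bullet(\hat{\cF}_w))\le c\}$ is closed in $T$, so $g_{\mathbf{d}}(t):=\inf_{w\in(\cW_{\mathbf{d}})_t}\lct(X_t,\fb_\bullet(\hat{\cF}_w))$ is lower semi-continuous on $T$ and finitely valued; and then $\hat{\Tilde{\delta}}^{(p)}_m(X_t,L_t)=\min_{\mathbf{d}}\,g_{\mathbf{d}}(t)\big/(s_{\mathbf{d}}^{(p)})^{1/p}$ is a finite minimum of lower semi-continuous, finitely valued functions of $t$, hence itself lower semi-continuous and finitely valued.

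The hard part will be the lower semi-continuity and finite-valuedness of the $\lct$ function over the flag bundle, but as noted this is unchanged from \cite{BL18b} and I would simply cite it; the work specific to general $p$ is only the constancy of $S^{(p)}_m$ along the strata $\cW_{\mathbf{d}}$ (immediate from Lemma \ref{lem:S-p-formula}) together with the elementary observation that passing to a finite minimum over $\mathbf{d}$ preserves both lower semi-continuity and finiteness of values. A minor point I would verify separately is the constancy of $d_m=h^0(X_t,mL_t)$ in $t$, which follows for $m\gg0$ from flatness of $\pi$, relative Serre vanishing, and the irreducibility of the connected components of the normal base $T$.
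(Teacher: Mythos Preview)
Your proposal is correct and follows exactly the approach the paper indicates: the paper itself simply states that the proof is a verbatim generalization of \cite[Proposition~6.4]{BL18b} and omits the details, and your writeup is precisely that generalization, with the only $p$-specific observation being that $S^{(p)}_m$ depends only on the dimension vector $\mathbf{d}$ of the flag. One tiny bookkeeping slip: non-triviality of $\cF$ (needed so that $s_{\mathbf d}^{(p)}>0$) is encoded by $d^{(1)}\ge1$, not by $d^{(0)}>d^{(1)}$.
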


To show Theorem \ref{thm:hat-delta-m<delta-m}, an intermediate step is to prove (cf. also \cite[Proposition 5.16]{BL18b})
$$
\Tilde{\delta}^{(p)}_m(X_t,L_t)-\Tilde{\delta}^{(p)}(X_t,L_t)\leq\varepsilon.
$$
By the strategy of \cite{BL18b}, this can be derived from 
$$
S^{(p)}(L_t,v)-S_{m}^{(p)}(L_t,v)\leq \varepsilon A_{X_t}(v)^p,
$$
which can be proved by generalizing the argument of \cite[Theorem 5.13]{BL18b} to our $L^p$ setting 
(here we need to use Theorem \ref{thm:S-p=tilde-S-p-m}). Then using \eqref{eq:delta-m-approxi}, we conclude Theorem \ref{thm:hat-delta-m<delta-m}.

The proof of Proposition \ref{prop:hat-delta-m-lsc} is a verbatim generalization of \cite[Propositon 6.4]{BL18b} so we omit it. Thus by \cite[Proposition 6.1]{BL18b} we finish the proof of Theorem \ref{thm:delta-p-semi-continuous}.

Finally, to finish the proof of Theorem \ref{thm:delta-m-p-lsc}, it suffices to show the following result. The author is grateful to Yuchen Liu for providing the proof.

\begin{proposition}
\label{prop:tilde-delta-p=delta-p}
One has
$$
\delta^{(p)}(L)=\inf_{F\text{ \emph{over} }X}\frac{A_X(F)}{S^{(p)}(L,F)^{1/p}}=\inf_{v\in\mathrm{Val}_X}\frac{A_X(v)}{S^{(p)}(L,v)^{1/p}}.
$$
\end{proposition}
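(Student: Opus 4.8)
The inequality $\Tilde{\delta}^{(p)}(L)\le\delta^{(p)}(L)$ is trivial, since prime divisors over $X$ are among the valuations of finite log discrepancy; the content is the reverse inequality, namely that for the purpose of this threshold every valuation is dominated by a divisorial one. My plan is to descend to the quantized level, peel off a single basis of $H^0(X,mL)$, and there convert the nonlinear $\ell^p$-functional of $v$ into an ordinary log canonical threshold by H\"older duality. Concretely, set $\delta^{(p)}_m(L):=\inf_F A_X(F)/S^{(p)}_m(L,F)^{1/p}$, the infimum over prime divisors over $X$. Applying Proposition~\ref{prop:S-p-m->S-p}(1)--(2) to the filtrations $\cF_F$ (which are linearly bounded, $L$ being big) gives, by the argument establishing \eqref{eq:lim-tilde-delta-p-m}, that $\delta^{(p)}(L)=\lim_m\delta^{(p)}_m(L)$; combined with \eqref{eq:lim-tilde-delta-p-m}, the Proposition reduces to the exact identity $\delta^{(p)}_m(L)=\Tilde{\delta}^{(p)}_m(L)$ for each fixed $m$ (the case $p=\infty$ being already contained in \cite[Theorem~C]{BJ17}, so I take $p\in[1,\infty)$).

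Recall, as the natural extension of Definition~\ref{def:S-p} to valuations via the majorization argument in the proof of Lemma~\ref{lem:S-p-formula}, that for any valuation $v$ one has $S^{(p)}_m(L,v)=\sup_{\{s_i\}}\frac1{d_m}\sum_{i=1}^{d_m}(v(s_i)/m)^p$, the supremum over bases of $H^0(X,mL)$, attained on bases compatible with $\cF_v$. Therefore $A_X(v)/S^{(p)}_m(L,v)^{1/p}=\inf_{\{s_i\}}A_X(v)\big/\big(\frac1{d_m}\sum_i(v(s_i)/m)^p\big)^{1/p}$, and interchanging infima,
$$
\Tilde{\delta}^{(p)}_m(L)=\inf_{\{s_i\}}\ \inf_{v:\,A_X(v)<\infty}\ \frac{A_X(v)}{\big(\frac1{d_m}\sum_i(v(s_i)/m)^p\big)^{1/p}}.
$$
For a fixed basis $\{s_i\}$ put $D_i:=\frac1m\{s_i=0\}$; these are effective $\QQ$-Cartier divisors with $v(s_i)/m=v(D_i)$ for every $v$, so the inner infimum equals $d_m^{1/p}$ times $\inf_v A_X(v)\big/\big(\sum_i v(D_i)^p\big)^{1/p}$ for the finite family $D_1,\dots,D_{d_m}$.

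The crux is then the following statement, for any finite family of effective $\QQ$-Cartier divisors $D_1,\dots,D_k$ on $X$ and any $p\in[1,\infty)$:
$$
\inf_{v:\,A_X(v)<\infty}\frac{A_X(v)}{\big(\sum_i v(D_i)^p\big)^{1/p}}=\inf_{F}\frac{A_X(F)}{\big(\sum_i\ord_F(D_i)^p\big)^{1/p}}.
$$
Only ``$\ge$'' needs proof. Given $v$ with $A_X(v)<\infty$ and $\vec c:=(v(D_i))_i\ne0$, set $b_i:=c_i^{p-1}/\|\vec c\|_p^{\,p-1}$, so that $\vec b\ge0$, $\|\vec b\|_{p'}=1$ (with $p'=p/(p-1)$), and the equality case of H\"older gives $\big(\sum_i v(D_i)^p\big)^{1/p}=\sum_i b_iv(D_i)=v(\Delta)$, where $\Delta:=\sum_i b_iD_i$ is an effective $\RR$-Cartier divisor. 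Hence $A_X(v)\big/\big(\sum_i v(D_i)^p\big)^{1/p}=A_X(v)/v(\Delta)\ge\lct(X,\Delta)$, and $\lct(X,\Delta)$ is realized by some prime divisor $F_0$ over $X$ --- the standard fact that the log canonical threshold is computed among the prime divisors appearing on a log resolution of $(X,\operatorname{Supp}\Delta)$, which goes through unchanged for $\RR$-divisors. Finally the elementary direction of H\"older yields $\ord_{F_0}(\Delta)=\sum_i b_i\ord_{F_0}(D_i)\le\big(\sum_i\ord_{F_0}(D_i)^p\big)^{1/p}$, so
$$
\frac{A_X(v)}{\big(\sum_i v(D_i)^p\big)^{1/p}}\ \ge\ \lct(X,\Delta)\ =\ \frac{A_X(F_0)}{\ord_{F_0}(\Delta)}\ \ge\ \frac{A_X(F_0)}{\big(\sum_i\ord_{F_0}(D_i)^p\big)^{1/p}},
$$
and taking the infimum over $v$ gives ``$\ge$''. (When $\vec c=0$ the left side is $+\infty$ and there is nothing to prove.)

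Feeding this back with $D_i=\frac1m\{s_i=0\}$, the formula for $\Tilde{\delta}^{(p)}_m(L)$ becomes $\inf_F\inf_{\{s_i\}}A_X(F)\big/\big(\frac1{d_m}\sum_i(\ord_F(s_i)/m)^p\big)^{1/p}$ after one more interchange of infima, which is exactly $\inf_F A_X(F)/S^{(p)}_m(L,F)^{1/p}=\delta^{(p)}_m(L)$ by Definition~\ref{def:S-p}; letting $m\to\infty$ concludes. I do not expect a serious obstacle: the whole argument hinges on the H\"older duality of the third paragraph, which replaces the nonlinear quantity $\big(\sum_i v(D_i)^p\big)^{1/p}$ by a supremum of the \emph{linear} functionals $v\mapsto v(\sum_i b_iD_i)$ and thereby collapses everything to the classical statement that log canonical thresholds of (possibly $\RR$-)divisors are computed by divisorial valuations. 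The only point demanding a little care is precisely that realization of $\lct(X,\Delta)$ by a prime divisor when $\Delta$ is merely an $\RR$-divisor, together with the routine juggling of the order of infima and suprema.
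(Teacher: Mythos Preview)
Your argument is correct and takes a genuinely different route from the paper's. The paper passes through the auxiliary invariant $\hat{\tilde{\delta}}^{(p)}_m$ built from \emph{all} $\NN$-filtrations of $R_m$: for each such $\cF$ the generated filtration $\hat\cF$ has finitely generated graded ideal $\fb_\bullet(\hat\cF)$ (via \cite[Lemma~3.20(2)]{BL18b}), so $\lct(X,\fb_\bullet(\hat\cF))$ is realized by a prime divisor, and one concludes using the approximation \eqref{eq:delta-m-approxi}. Your proof instead stays at the level of $\tilde\delta^{(p)}_m$ itself and linearizes the $\ell^p$-functional by H\"older duality: choosing the dual weights $b_i=c_i^{p-1}/\|\vec c\|_p^{p-1}$ converts $(\sum_i v(D_i)^p)^{1/p}$ into $v(\Delta)$ for a single $\RR$-divisor $\Delta$, after which the classical fact that $\lct(X,\Delta)$ is computed on a log resolution finishes the job, with the reverse H\"older inequality closing the loop for $F_0$.

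What your approach buys is a more elementary and self-contained argument that avoids the generated-filtration machinery of \cite{BL18b} entirely and in fact yields the sharper statement $\tilde\delta^{(p)}_m(L)=\delta^{(p)}_m(L)$ at every fixed level $m$, not merely in the limit. The paper's route, by contrast, is the natural one from the viewpoint of Section~\ref{sec:lsc}, since $\hat{\tilde{\delta}}^{(p)}_m$ is already needed there for the semi-continuity argument, so no extra cost is incurred. The only point in your proof requiring a moment's care is, as you note, that $\lct(X,\Delta)$ is divisorially computed when $\Delta$ is merely an $\RR$-divisor; this is indeed standard (log resolution of $\operatorname{Supp}\Delta$), and the rest is routine manipulation of infima.
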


\begin{proof}
It amounts to proving
$$
\tilde{\delta}^{(p)}(L)\geq\delta^{(p)}(L).
$$
To show this, it is enough to show that this holds in the following quantized sense:
\begin{equation}
    \label{eq:tilde-delta-p-m>=delta-p-m}
    \hat{\tilde{\delta}}^{(p)}_m(L)\geq\inf_{F}\frac{A_X(F)}{S^{(p)}_m(L,F)^{1/p}}.
\end{equation}
Given this, then one can finish the proof by letting $m\rightarrow\infty$ as the left hand side converges to $\tilde{\delta}^{(p)}(L)$ by \eqref{eq:delta-m-approxi} and \eqref{eq:lim-tilde-delta-p-m} while the right hand converges to $\delta^{(p)}(L)$ by Proposition \ref{prop:S-p-m->S-p} and hence $\tilde{\delta}^{(p)}(L)\geq\delta^{(p)}(L)$ as desired.

To show \eqref{eq:tilde-delta-p-m>=delta-p-m}, the key point is that, given any $\NN$-filtration $\cF$ of $R_m$ with $T_m(\cF)\leq1$, the associated graded ideal $\fb_\bullet(\hat{\cF})$ is finitely generated (see \cite[Lemma 3.20 (2)]{BL18b}). Thus there is a prime divisor $F$ over $X$ computing $\lct(X,\fb_\bullet(\hat{\cF}))$. This then yields (as in \cite[Lemma 4.16]{BL18b})
$$
\frac{\lct(X,\fb_\bullet(\hat{\cF}))}{S_m^{(p)}(L,\cF)^{1/p}}\geq\frac{A_X(F)}{S^{(p)}(L,F)^{1/p}},
$$
which finishes the proof.
\end{proof}

\begin{proof}
[Proof of Theorem \ref{thm:delta-m-p-lsc}]
The result follows from Theorem \ref{thm:delta-p-semi-continuous} and Proposition \ref{prop:tilde-delta-p=delta-p}.
\end{proof}

\begin{remark}
Arguing as in \cite[Section 6]{BJ17}, one can also show that there is always a valuation $v^*$ computing $\Tilde{\delta}^{(p)}(L)$ when $L$ is ample. Moreover, as in \cite[Proposition 4.8]{BJ17}, $v^*$ is the unique valuation (up to scaling) computing $\lct(\frak{a}_\bullet(v^*))$. Then by \cite{Xu20}, $v^*$ is actually quasi-monomial (the author is grateful to one anonymous referee for providing this argument).
\end{remark}

\section{Barycenter inequalities}
\label{sec:bary}
Let $(X,L)$ be a polarized pair.
We prove Theorem \ref{thm:p-barycenter-ineq}, Theorem \ref{thm:delta-p=>KE} and Theorem \ref{thm:delta-p-equality} in this section. To show Theorem \ref{thm:p-barycenter-ineq}, the main ingredient is the Brunn--Minkowski inequality for Newton--Okoukov bodies (cf. \cite{LM09,ELMNP09}). While for Theorems \ref{thm:delta-p=>KE} and \ref{thm:delta-p-equality}, we also need some measure-theoretic argument from real analysis.

\begin{proof}[Proof of Theorem \ref{thm:p-barycenter-ineq}] We follow the argument in \cite{Fuj19b}.
To obtain the lower bound for $S^{(p)}(L,F)$, we use the fact that $\vol(L-xF)^{1/n}$ is a decreasing concave function (cf. \cite[Corollary 4.12]{LM09}), so that
$$
\vol(L-xF)\geq\frac{\vol(L)}{\tau^n(L,F)}\cdot\big(\tau(L,F)-x\big)^n
$$
and hence
\begin{equation*}
    \begin{aligned}
    S^{(p)}(L,F)&=\frac{p}{\vol(L)}\int_0^{\tau(L,F)}x^{p-1}\vol(L-xF)dx\\
    &\geq \frac{p}{\tau^n(L,F)}\int_0^{\tau(L,F)}x^{p-1}\big(\tau(L,F)-x\big)^ndx\\
    &=p\tau(L,F)^p\int_0^1t^{p-1}(1-t)^ndt\\
    &=\frac{\Gamma(p+1)\Gamma(n+1)}{\Gamma(p+n+1)}\tau^p(L,F).
    \end{aligned}
\end{equation*}

To show the upper bound for $S^{(p)}(L,F)$, we assume $n\geq2$ (the case of $n=1$ is trivial). We use the argument of \cite[Proposition 2.1]{Fuj19b}, which shows that there exists a non-negative concave function $f(x)$ for $x\in[0,\tau(L,F)]$ such that
$$
f^{n-1}(x)dx=d\bigg(\frac{-\vol(L-xF)}{\vol(L)}\bigg)\ \text{for }x\in(0,\tau(L,F)).
$$
Thus
\footnote{This suggests that $(S^{(p)}(L,F))^{1/p}$ can be treated as the $p$-th barycenter of a convex body along $x$-axis.}
$$
S^{(p)}(L,F)=\int_0^{\tau(L,F)}x^pf^{n-1}(x)dx.
$$
For simplicity set $b:=S^{(p)}(L,F)$. Note that $f(x)>0$ for $x\in(0,\tau(L,F))$. Then $b\in(0,\tau(L,F)^p)$ and by the concavity of $f(x)$, we have
\begin{equation*}
    \begin{cases}
    f(x)\geq\frac{f(b)}{b}x,\ x\in[0,b^{1/p}],\\
    f(x)\leq\frac{f(b)}{b}x,\ x\in[b^{1/p},\tau(L,F)].
    \end{cases}
\end{equation*}
Thus we have
$$
0=\int_0^{\tau(L,F)}(x^p-b)f^{n-1}(x)dx\geq\int_0^{\tau(L,F)}(x^p-b)\bigg(\frac{f(b)}{b}x\bigg)^{n-1}dx=\bigg(\frac{f(b)}{b}\bigg)^{n-1}\cdot\bigg(\frac{\tau(L,F)^{p+n}}{p+n}-b\frac{\tau^n(L,F)}{n}\bigg),
$$
which implies that
$$
b\leq\frac{n}{n+p}\tau(L,F)^p,
$$
as desired.
\end{proof}

An immediate consequence is the following, which generalizes \eqref{eq:alpha=delta}.
\begin{corollary}
One has 
$
\big(\frac{n+p}{n}\big)^{1/p}\alpha(L)\leq\delta^{(p)}(L)\leq\big(\frac{\Gamma(p+n+1)}{\Gamma(p+1)\Gamma(n+1)}\big)^{1/p}\alpha(L).
$
\end{corollary}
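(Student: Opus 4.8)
The plan is to deduce this directly from Theorem \ref{thm:p-barycenter-ineq} by taking $p$-th roots and dividing through by the log discrepancy. Concretely, for any prime divisor $F$ over $X$, Theorem \ref{thm:p-barycenter-ineq} gives
$$
\frac{\Gamma(p+1)\Gamma(n+1)}{\Gamma(p+n+1)}\,\tau(L,F)^p\leq S^{(p)}(L,F)\leq\frac{n}{n+p}\,\tau(L,F)^p,
$$
and since $t\mapsto t^{1/p}$ is increasing on $\RR_{\geq0}$, this yields
$$
\bigg(\frac{\Gamma(p+1)\Gamma(n+1)}{\Gamma(p+n+1)}\bigg)^{1/p}\tau(L,F)\leq S^{(p)}(L,F)^{1/p}\leq\bigg(\frac{n}{n+p}\bigg)^{1/p}\tau(L,F).
$$

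Next I would divide $A_X(F)$ by each term (noting $A_X(F)>0$, $\tau(L,F)>0$, $S^{(p)}(L,F)>0$ since $L$ is big, so all quotients are well defined and positive). The right inequality above gives
$$
\frac{A_X(F)}{S^{(p)}(L,F)^{1/p}}\geq\bigg(\frac{n+p}{n}\bigg)^{1/p}\frac{A_X(F)}{\tau(L,F)},
$$
and the left inequality gives
$$
\frac{A_X(F)}{S^{(p)}(L,F)^{1/p}}\leq\bigg(\frac{\Gamma(p+n+1)}{\Gamma(p+1)\Gamma(n+1)}\bigg)^{1/p}\frac{A_X(F)}{\tau(L,F)}.
$$

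Finally I would take the infimum over all prime divisors $F$ over $X$. Since the multiplicative constants $\big(\tfrac{n+p}{n}\big)^{1/p}$ and $\big(\tfrac{\Gamma(p+n+1)}{\Gamma(p+1)\Gamma(n+1)}\big)^{1/p}$ do not depend on $F$, they pass through the infimum, and using the definitions $\delta^{(p)}(L)=\inf_F A_X(F)/S^{(p)}(L,F)^{1/p}$ and $\alpha(L)=\inf_F A_X(F)/\tau(L,F)$ we obtain
$$
\bigg(\frac{n+p}{n}\bigg)^{1/p}\alpha(L)\leq\delta^{(p)}(L)\leq\bigg(\frac{\Gamma(p+n+1)}{\Gamma(p+1)\Gamma(n+1)}\bigg)^{1/p}\alpha(L),
$$
as claimed. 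There is no real obstacle here; the only point requiring a word of care is that passing a uniform positive constant through an infimum is an equality (not merely an inequality in one direction), which makes both bounds come out cleanly, and that the reduction to \eqref{eq:alpha=delta} in the case $p=1$ is immediate from $\Gamma(n+2)/(\Gamma(2)\Gamma(n+1))=n+1$ and $(n+1)/n$.
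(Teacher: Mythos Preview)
Your proof is correct and is exactly the argument the paper has in mind: the corollary is stated there as ``an immediate consequence'' of Theorem~\ref{thm:p-barycenter-ineq} with no separate proof, and the immediate deduction is precisely to take $p$-th roots, divide by $A_X(F)$, and pass the $F$-independent constants through the infimum.
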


Now we turn to the proof of Theorem \ref{thm:delta-p=>KE}. The key point is the following monotonicity.

\begin{proposition}
\label{prop:H-p-increase}
Let $F$ be any prime divisor over $X$. Set
$$
H(p):=\bigg(\frac{n+p}{n}S^{(p)}(L,F)\bigg)^{1/p} \text{ for }p\geq1.
$$
Then $H(p)$ is non-decreasing in $p$.
\end{proposition}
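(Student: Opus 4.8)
The plan is to fix $1\le p<q$ and prove $H(q)\ge H(p)$ directly. I may assume $n\ge 2$, since for $n=1$ the volume function $\vol(L-xF)$ is affine in $x$ and one computes $H(p)\equiv\tau(L,F)$. Write $\tau:=\tau(L,F)$ and recall from the proof of Theorem~\ref{thm:p-barycenter-ineq} that Brunn--Minkowski for $\vol(L-xF)^{1/n}$ produces a non-negative concave function $f$ on $[0,\tau]$ with $f(x)^{n-1}\,dx=d\!\left(-\vol(L-xF)/\vol(L)\right)$, so that $\int_0^\tau f^{n-1}\,dx=1$ and $S^{(r)}(L,F)=\int_0^\tau x^r f(x)^{n-1}\,dx$ for every $r>0$. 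Set $\lambda:=H(p)=\big(\tfrac{n+p}{n}S^{(p)}(L,F)\big)^{1/p}$; by the upper bound in Theorem~\ref{thm:p-barycenter-ineq} we have $\lambda\le\tau$. It suffices to show
$$S^{(q)}(L,F)\ \ge\ \tfrac{n}{n+q}\,\lambda^{q},$$
because this rearranges to $H(q)=\big(\tfrac{n+q}{n}S^{(q)}(L,F)\big)^{1/q}\ge\lambda=H(p)$.

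The strategy is to compare $f^{n-1}$ with the density of the ``conical'' model that realizes equality in Theorem~\ref{thm:p-barycenter-ineq}. Let $g(x):=c\,x$ on $[0,\lambda]$ with $c^{\,n-1}=n/\lambda^n$, so that $g^{n-1}\,dx$ is a probability measure on $[0,\lambda]$ and $\int_0^\lambda x^r g^{n-1}\,dx=\tfrac{n}{n+r}\lambda^{r}$ for every $r>0$; in particular, by the defining equation of $\lambda$, $\int_0^\lambda x^p g^{n-1}\,dx=\tfrac{n}{n+p}\lambda^p=S^{(p)}(L,F)$. Put $h:=f^{n-1}-g^{n-1}\mathbf{1}_{[0,\lambda]}$ on $[0,\tau]$. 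Then $\int_0^\tau h\,dx=1-1=0$ and $\int_0^\tau x^p h\,dx=S^{(p)}(L,F)-S^{(p)}(L,F)=0$, and the inequality to be proved is exactly $\int_0^\tau x^q h\,dx\ge 0$.

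Next I would determine the sign pattern of $h$. On $[\lambda,\tau]$ one has $h=f^{n-1}\ge 0$. On $[0,\lambda]$ the sign of $h$ equals that of $f-g$, and $f-g$ is concave (a concave minus an affine function) with $(f-g)(0)=f(0)\ge 0$; hence there is $s_0\in[0,\lambda]$ with $f-g\ge 0$ on $[0,s_0]$ and $f-g\le 0$ on $[s_0,\lambda]$. If $s_0=\lambda$ then $h\ge 0$ on all of $[0,\tau]$, so $h\equiv 0$ by $\int_0^\tau h=0$ and we are done; thus assume $s_0<\lambda$, so that $h\ge 0$ on $[0,s_0]$, $h\le 0$ on $[s_0,\lambda]$, and $h\ge 0$ on $[\lambda,\tau]$. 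Now set $b:=\tfrac{\lambda^q-s_0^q}{\lambda^p-s_0^p}>0$, $a:=s_0^q-b\,s_0^p$, and $\phi(x):=x^q-b\,x^p-a$, so that $\phi(s_0)=\phi(\lambda)=0$. One checks $a\le 0$ (equivalently $s_0^{\,q-p}\le\lambda^{\,q-p}$, which holds because $s_0<\lambda$ and $q>p$), hence $\phi(0)=-a\ge 0$; moreover $\phi'(x)=x^{p-1}(q\,x^{q-p}-p\,b)$ changes sign exactly once on $(0,\infty)$, so $\phi$ is strictly decreasing and then strictly increasing. Together with $\phi(0)\ge 0$ and $\phi(s_0)=\phi(\lambda)=0$ this forces $\phi\ge 0$ on $[0,s_0]$, $\phi\le 0$ on $[s_0,\lambda]$, and $\phi\ge 0$ on $[\lambda,\infty)$ --- the very same pattern as $h$. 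Therefore $\phi\,h\ge 0$ on $[0,\tau]$, and since $\int_0^\tau x^p h=\int_0^\tau h=0$,
$$\int_0^\tau x^q h\,dx=\int_0^\tau \phi\,h\,dx+b\int_0^\tau x^p h\,dx+a\int_0^\tau h\,dx=\int_0^\tau \phi\,h\,dx\ \ge\ 0 .$$
Finally $S^{(q)}(L,F)=\int_0^\tau x^q f^{n-1}\,dx\ge\int_0^\lambda x^q g^{n-1}\,dx=\tfrac{n}{n+q}\lambda^{q}$, so $H(q)\ge H(p)$.

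The main obstacle is the third step: arranging a comparison function whose sign pattern matches that of $h$. Concavity of $f$ (i.e. Brunn--Minkowski) is exactly what limits $h$ to at most two sign changes with the last one back to $+$, and the two-parameter family $\phi(x)=x^q-bx^p-a$ has precisely enough freedom to reproduce that pattern; the delicate point is the sign check $a\le 0$, which is where $\lambda\le\tau$ enters. (An equivalent route is to differentiate in $p$: $H'(p)\ge 0$ is equivalent to $\int_0^\tau x^p(\log x)\,h\,dx\ge 0$ with the same $h$, and the identical comparison argument applies with $\phi(x)=x^p\log x-bx^p-a$; I would write out whichever version is shorter.)
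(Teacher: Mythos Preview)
Your proof is correct and takes a genuinely different route from the paper. The paper rewrites $H(p)=(K(n+p)/K(n))^{1/p}$ where $K(s)=s\int_0^\tau x^{s-1}(f(x)/x)^{n-1}\,dx$, then proves $K$ is log-convex in $s$ by integrating by parts against the decreasing function $(f/x)^{n-1}$ and applying H\"older's inequality to the resulting Stieltjes measure $d(-(f/x)^{n-1})$. You instead compare directly with the conical extremizer $g(x)=cx$ on $[0,\lambda]$ and finish with a two-parameter sign-matching comparison function $\phi(x)=x^q-bx^p-a$; this is more elementary (no Lebesgue--Stieltjes integration, no H\"older) and makes the role of the equality case in Theorem~\ref{thm:p-barycenter-ineq} transparent. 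On the other hand, the paper's log-convexity route is better adapted to the rigidity analysis needed later for Theorem~\ref{thm:delta-p-equality}: when $H$ is constant on an interval the H\"older inequality becomes an equality, forcing $d(-(f/x)^{n-1})$ to be supported on finitely many points and hence $f(x)=Cx$; extracting the same conclusion from your condition $\int\phi h=0$ is possible but less immediate.

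One small expository correction: the check $a\le 0$ only uses $s_0<\lambda$, not $\lambda\le\tau$. The inequality $\lambda\le\tau$ is what you actually use earlier, to ensure that the support of $g^{n-1}\mathbf{1}_{[0,\lambda]}$ lies inside $[0,\tau]$ so that $h=f^{n-1}\ge 0$ on $[\lambda,\tau]$.
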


\begin{proof}
When $n=1$, $H(p)$ is a constant, so there is nothing to prove.
Then as in the previous proof, we assume $n\geq2$ and write
$$
S^{(p)}(L,F)=\int_0^{\tau(L,F)}x^pf^{n-1}(x)dx
$$
for some non-negative concave function $f(x)$ defined on $[0,\tau(L,F)]$. For simplicity set $\tau:=\tau(L,F)$. Then it amounts to proving that
$$
H(p)=\bigg(\frac{n+p}{n}\int_0^\tau x^pf^{n-1}(x)dx\bigg)^{1/p}
$$
is non-decreasing in $p$ for any non-negative concave function $f(x)$ defined on $[0,\tau]$ with the normalization condition
$$
\int_0^\tau f^{n-1}(x)dx=1.
$$

To this end, we introduce an auxiliary function:
$$
g(x):=\frac{f(x)}{x},\ x\in(0,\tau].
$$
By concavity of $f(x)$,
$
g(x) \text{ is differentiable almost everywhere, non-negative and decreasing for }x\in (0,\tau].
$
For $s>n-1$, we put
$$
K(s):=s\int_0^\tau x^{s-1}g^{n-1}(x)dx.
$$
Then one has (using $\int_0^\tau f^{n-1}(x)dx=1$)
$$
H(p)=\bigg(\frac{K(n+p)}{K(n)}\bigg)^{1/p}.
$$
To show this is non-decreasing in $p$, it then suffices to show that
$$
K(s)\text{ is log convex for } s>n-1. 
$$

To see this, for each small $\varepsilon>0$,
using integration by parts for Lebesgue–Stieltjes integration, we have
$$
s\int_\varepsilon^\tau x^{s-1}g^{n-1}(x)dx=\int_\varepsilon^\tau g^{n-1}(x)dx^s=\int_\varepsilon^\tau x^sd(-g^{n-1}(x))+\tau^sg^{n-1}(\tau)-\varepsilon^{s}g^{n-1}(\varepsilon).
$$
Here we used the fact that, as a monotonic function, $g^{n-1}$ has bounded variation on $[\varepsilon,\tau]$ and hence the measure $d(-g^{n-1}(x))$ is well-defined on $(\varepsilon,\tau)$. Now observing
$$
\lim_{\varepsilon\searrow 0}\varepsilon^sg^{n-1}(\varepsilon)=\lim_{\varepsilon\searrow 0}\varepsilon^{s+1-n}f(\varepsilon)=0\text{ for }s>n-1,
$$
we derive that for $s>n-1$,
$$
K(s)=\int_0^\tau x^s d(-g^{n-1}(x))+\tau^sg^{n-1}(\tau),
$$
where $d(-g^{n-1}(x))$ is understood as a measure on $(0,\tau)$. Set
$$
M(s):=\int_0^\tau x^s d(-g^{n-1}(x))\text{ for }s>n-1.
$$
Now applying H\"older's inequality to the measure space $\big((0,\tau),d(-g^{n-1}(x))\big)$, we derive that
\begin{equation}
    \label{eq:Holder}
    \bigg(\int_0^\tau\log x \cdot x^sd(-g^{n-1}(x))\bigg)^2\leq\bigg(\int_0^\tau(\log x)^2x^sd(-g^{n-1}(x))\bigg)\bigg(\int_0^\tau x^sd(-g^{n-1}(x))\bigg),
\end{equation}
which reads
$$
M^{\prime\prime}M-(M^\prime)^2\geq0.
$$
This implies that
$$
K^{\prime\prime}K-(K^\prime)^2=M^{\prime\prime}M-(M^\prime)^2+\tau^sg^{n-1}(\tau)\int_0^\tau\big(\log\frac{x}{\tau}\big)^2x^sd(-g^{n-1}(x))\geq0,
$$
so that $K(s)$ is log convex, as desired.
\end{proof}

\begin{remark}
We also believe that 
$$
\bigg(\frac{\Gamma(n+p+1)}{\Gamma(n+1)\Gamma(p+1)}S^{(p)}(L,F)\bigg)^{1/p} \text{ is non-increasing in }p.
$$
However it seems to the author that the proof of this is much more difficult, which may involve the log concavity of the generalized beta function. If this is indeed true, it then follows that
$$
\delta^{(p)}(L)\geq\frac{1}{n+1}\bigg(\frac{\Gamma(n+p+1)}{\Gamma(n+1)\Gamma(p+1)}\bigg)^{1/p}\delta(L),
$$
which hence generalizes the inequality $\alpha(L)\geq\frac{1}{n+1}\delta(L)$. Meanwhile it will also follow that $\delta^{(p)}(L)$ is continuous in $p$.
We leave this problem to the interested readers.
\end{remark}

\begin{proof}
[Proof of Theorem \ref{thm:delta-p=>KE}]
Proposition \ref{prop:H-p-increase} implies that for any $F$ over $X$ and $p\geq1$,
\begin{equation}
    \label{eq:S-p>S}
    S^{(p)}(L,F)^{1/p}\geq\frac{n+1}{n}\bigg(\frac{n}{n+p}\bigg)^{1/p}S(L,F).
\end{equation}
Thus
$$
\delta^{(p)}(L)=\inf_F\frac{A_X(F)}{S^{(p)}(L,F)^{1/p}}\leq\frac{n}{n+1}\bigg(\frac{n+p}{n}\bigg)^{1/p}\inf_F\frac{A_X(F)}{S(L,F)}=\frac{n}{n+1}\bigg(\frac{n+p}{n}\bigg)^{1/p}\delta(L).
$$
Now in the Fano setting (when $L=-K_X$), we finish the proof by invoking Theorem \ref{theorem:delta} and \cite{BBJ18,LTW19}.
\end{proof}

Finally, we prove Theorem \ref{thm:delta-p-equality}. This boils down to a carefully analysis on the equality case in the proof Proposition \ref{prop:H-p-increase}.

\begin{proof}
[Proof of Theorem \ref{thm:delta-p-equality}] The case of $p=\infty$ is exactly \cite[Theorem 1.2]{Fuj19c}. So we assume $p\in(1,\infty)$. We follow the strategy of Fujita \cite{Fuj19c}. Note that $X=\PP^1$ clearly satisfies the claimed statement, so assume that $n\geq2$ and that $X$ is not K-stable with $\delta(-K_X)=1$. Then by \cite[Theorem 1.6]{Fuj19} there exists a dreamy divisor $F$ over $X$ such that
$$
A_X(F)=S(-K_X,F).
$$
Moreover this $F$ has to achieve the equality in \eqref{eq:S-p>S}, namely
$$
S^{(p)}(L,F)^{1/p}=\frac{n+1}{n}\bigg(\frac{n}{n+p}\bigg)^{1/p}S(L,F).
$$
Using the notion in the proof of Proposition \ref{prop:H-p-increase}, this reads
$$
\frac{n+1}{n}\int_0^\tau xf^{n-1}(x)dx=\bigg(\frac{n+p}{n}\int_0^\tau x^pf^{n-1}(x)dx\bigg)^{1/p}.
$$
Namely,
$$
\frac{K(n+1)}{K(n)}=\bigg(\frac{K(n+p)}{K(n)}\bigg)^{1/p}.
$$
By the log convexity of $K(s)$, this forces that
$$\log K(s)\text{ is affine for }s\in[n,n+p].$$
In particular
$$
K^{\prime\prime}K-(K^\prime)^2=M^{\prime\prime}M-(M^\prime)^2+\tau^sg^{n-1}(\tau)\int_0^\tau\big(\log\frac{x}{\tau}\big)^2x^sd(-g^{n-1}(x))=0\text{ for }s\in[n,n+p].
$$
This further forces that
$$
M^{\prime\prime}M-(M^\prime)^2=\tau^sg^{n-1}(\tau)\int_0^\tau\big(\log\frac{x}{\tau}\big)^2x^sd(-g^{n-1}(x))=0 \text{ for }s\in[n,n+p].
$$
In particular, the H\"older inequality \eqref{eq:Holder} is an equality, which implies that there exist real numbers $\alpha, \beta\geq0$, not both of them zero, such that
$$
\alpha|\log x|=\beta x^{s/2} \text{ holds  for }x\in(0,\tau)\backslash U,
$$
where $U\subset(0,\tau)$ is a subset satisfying
$$
\int_Ud(-g^{n-1}(x))=0.
$$
This implies that $d(-g^{n-1}(x))$ is a zero measure away from finitely many points in $(0,\tau)$. Therefore
$$
g(x)\text{ is a step function.}
$$
Now recall that $f(x)=xg(x)$ is concave and hence continuous
on $(0,\tau)$. Thus
$$
f(x)=Cx \text{ for some constant }C>0.
$$
The normalization condition $\int_0^\tau f^{n-1}(x)dx=1$ further implies that
$$
C^{n-1}=\frac{n}{\tau^n}.
$$
Thus
$$
A_X(F)=S(-K_X,F)=\frac{n}{\tau^n}\int_0^\tau x^ndx=\frac{n}{n+1}\tau(-K_X,F).
$$
Then by \cite[Theorem 4.1]{Fuj19c}, $X\cong\PP^n$. Now let $H$ be a hyperplane in $\PP^n$, straightforward calculation then yields
$$
\frac{n}{n+1}\bigg(\frac{n+p}{n}\bigg)^{1/p}=\delta^{(p)}(-K_X)\leq\frac{A_X(H)}{S^{(p)}(-K_X,H)^{1/p}}=\frac{1}{n+1}\cdot\bigg(\frac{\Gamma(p+n+1)}{\Gamma(p+1)\Gamma(n+1)}\bigg)^{1/p}.
$$
This will give us a contradiction. Indeed, consider the function
$$
h(x):=x\log n-\sum_{i=1}^{n-1}\log(\frac{x+i}{i})\text{ for }x\geq1.
$$
Observe that
$$
h(1)=\log n-\log n=0.
$$
Moreover, for $x\geq1$,
\begin{equation*}
    \begin{aligned}
        h^\prime(x)&=\log n-\sum_{i=1}^{n-1}\frac{1}{x+i}
        \geq\log n-\sum_{i=2}^{n}\frac{1}{i}>\log n-\int_1^n\frac{1}{x}dx=0.\\
    \end{aligned}
\end{equation*}
Here we used $n\geq2$. Thus
$$
h(p)=p\log n-\sum_{i=1}^{n-1}\log\frac{p+i}{i}>0
$$
since we assumed $p>1$.
From this we derive that (recall $x\Gamma(x)=\Gamma(x+1)$)
\begin{equation*}
    \begin{aligned}
        \frac{1}{n+1}\cdot\bigg(\frac{\Gamma(p+n+1)}{\Gamma(p+1)\Gamma(n+1)}\bigg)^{1/p}&=\frac{1}{n+1}\cdot\bigg(\prod_{i=1}^n\frac{p+i}{i}\bigg)^{1/p}
        <\frac{n}{n+1}\bigg(\frac{n+p}{n}\bigg)^{1/p},\\
    \end{aligned}
\end{equation*}
which is a contradiction.
So we conclude.
\end{proof}

\section{Relating to the $d_p$-geometry of maximal geodesic rays}
\label{sec:test-curve}

We prove Theorem \ref{thm:S-p=d_p/t} in this section. In fact we will carry out the discussion in a more general fashion using filtrations instead of divisorial valuations. 

Our setup is as follows.
Let $(X,L)$ be a polarized K\"ahler manifold. We fix a smooth Hermitian metric $h$ on $L$ such that $\omega:=-dd^c\log h\in c_1(L)$ defines a K\"ahler form (here $dd^c:=\frac{\sqrt{-1}}{2\pi}\partial\bar{\partial}$).
As before put
$
R:=\bigoplus_{m\geq0}R_m
$
with
$
R_m:=H^0(X,mL).
$
Let $\cF$ be a linearly bounded filtration of $R$. Then $\cF$ will induce a \emph{test curve}, or equivalently (by Legendre transform), a \emph{maximal geodesic ray} in the space of pluri-subharmonic potentials (see \cite{RWN14,DX20} for a detailed discussion). These objects play crucial roles in the study of the Yau--Tian--Donaldson conjecture (cf. e.g. \cite{BBJ18,Li20}).

\begin{definition}\cite[Section 7]{RWN14}
For any $\lambda\in\RR$ and $x\in X$, put
$$
\psi^{\cF}_{\lambda,m}(x):=\sup\bigg\{\frac{1}{m}\log|s|^2_{h^m}(x)\bigg|s\in\cF^{\lambda m}R_m,\ \sup_X|s|^2_{h^m}\leq1\bigg\}
$$
and
$$
\psi^{\cF}_\lambda:=\bigg(\lim_{m\rightarrow+\infty}\psi_{\lambda,m}\bigg)^*,
$$
where $*$ denotes the upper semi-continuous regularization. We call $\psi_\lambda^{\cF}$ the \emph{test curve} induced by $\cF$.
\end{definition}

Note that $\psi_\lambda^{\cF}$ is non-increasing and concave in $\lambda$ (since $\cF$ is decreasing and multiplicative).

\begin{theorem}
\cite[Corollary 7.12]{RWN14}
\label{thm:varphi-t-sup-psi-tau}
Consider the Legendre transform
\begin{equation}
\label{eq:def-Legendre-transf}
    \varphi^{\cF}_t:=\bigg(\sup_{\lambda\in\RR}\{\psi^{\cF}_\lambda+t\lambda\}\bigg)^*\text{ for }t\geq0.
\end{equation}
Then $\varphi^{\cF}_t$ is a weak geodesic ray emanating from $0$.
\end{theorem}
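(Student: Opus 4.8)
The plan is to deduce the statement from the general correspondence between test curves and weak geodesic rays established by Ross--Witt Nyström, so that the proof reduces to checking that linear boundedness of $\cF$ forces $(\psi^{\cF}_\lambda)_{\lambda\in\RR}$ to be a genuine maximal test curve in their sense, after which \cite[Corollary 7.12]{RWN14} applies verbatim. Concretely I would proceed in three steps.

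\textbf{Step 1: qualitative properties of the family $\psi^{\cF}_\lambda$.} Monotonicity and concavity in $\lambda$ have already been recorded above; they come from axioms (1) and (3) of a filtration (that $\cF$ is decreasing and sub-multiplicative). For the end behaviour, axiom (4) gives $\cF^\lambda R_m=R_m$ for $\lambda\le0$, so for such $\lambda$ the function $\psi^{\cF}_{\lambda,m}$ is the sup-normalised Bergman function of the full space $H^0(X,mL)$; since $L$ is ample and $\omega=-\ddc\log h>0$ is K\"ahler, standard Bergman-kernel asymptotics give that it converges to the $\omega$-psh envelope $P(0)$, which is $0$ because $\omega\ge0$ makes the constant $0$ the largest $\omega$-psh function $\le0$. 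Hence $\psi^{\cF}_\lambda\equiv0$ for $\lambda\le0$. At the other end, linear boundedness provides $C>0$ with $\cF^{Cm}R_m=\{0\}$, so $\psi^{\cF}_{\lambda,m}\equiv-\infty$ for $\lambda\ge C$ and therefore $\psi^{\cF}_\lambda\equiv-\infty$ for $\lambda>C$. Thus $\lambda\mapsto\psi^{\cF}_\lambda$ is a concave, decreasing, upper semi-continuous curve of $\omega$-psh functions with support contained in $(-\infty,C]$ and constant value $0$ on $(-\infty,0]$; in particular the Legendre transform \eqref{eq:def-Legendre-transf} is finite, and at $t=0$ it equals $\sup_{\lambda\le0}\psi^{\cF}_\lambda=0$, which is the asserted initial condition $\varphi^{\cF}_0=0$.

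\textbf{Step 2: maximality of the test curve.} I would verify that each $\psi^{\cF}_\lambda$ is already an envelope of the psh type $P[\psi^{\cF}_\lambda]$ that governs geodesic rays (equivalently, that no mass is lost when passing to the upper envelope in $\lambda$ under the Legendre transform). This is precisely the point of the partial Bergman kernel construction: the $\psi^{\cF}_{\lambda,m}$ are defined as suprema over normalised sections lying in $\cF^{\lambda m}R_m$, and the limiting object is exactly the relevant envelope --- this is the content of the partial Bergman convergence results in \cite[Section 7]{RWN14} (see also \cite{DX20}), which use precisely linear boundedness of $\cF$ to keep the normalising constants under control. Granting this, $(\psi^{\cF}_\lambda)$ is a maximal test curve.

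\textbf{Step 3: conclusion.} By \cite[Corollary 7.12]{RWN14}, the Legendre transform $t\mapsto\varphi^{\cF}_t$ of a maximal test curve is a weak geodesic ray: the $S^1$-invariant function $\Phi(x,w):=\varphi^{\cF}_{-\log|w|^2}(x)$ on $X\times\{0<|w|<1\}$ is $\omega$-psh (pulled back from $X$) and solves the homogeneous complex Monge--Amp\`ere equation $(\omega+\ddc\Phi)^{n+1}=0$ there. Combined with $\varphi^{\cF}_0=0$ from Step 1, this yields a weak geodesic ray emanating from $0$, as claimed. The main obstacle is Step 2: making the passage $m\to\infty$ rigorous --- that $\lim_m\psi^{\cF}_{\lambda,m}$ exists off a pluripolar set, that its usc regularisation is the correct envelope, and that maximality holds uniformly in $\lambda$. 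All of this is already established in \cite{RWN14} for filtration-induced test curves under the linear boundedness hypothesis, so in the write-up I would simply isolate these as the precise inputs being borrowed, the remaining bookkeeping in Steps 1 and 3 being routine.
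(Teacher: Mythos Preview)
The paper does not give its own proof of this statement: it is recorded as a theorem with a direct citation to \cite[Corollary 7.12]{RWN14} and no argument is supplied beyond the remark that ``weak geodesic'' means $\varphi^{\cF}_t$ satisfies a homogeneous Monge--Amp\`ere equation in a weak sense. So there is no in-paper proof to compare your proposal against.

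That said, your sketch is a reasonable unpacking of what one would do to invoke \cite{RWN14}: verify that linear boundedness forces $(\psi^{\cF}_\lambda)$ to be a maximal test curve (Steps 1--2), then apply the Legendre-transform correspondence (Step 3). Your Step 1 computation that $\varphi^{\cF}_0=0$ is exactly the kind of detail the paper takes for granted. The only caution is that you should be careful about which parts of Step 2 are genuinely proved in \cite{RWN14} for arbitrary linearly bounded filtrations versus only for those coming from test configurations; the paper itself flags this distinction later (after the next theorem it remarks that \cite[Theorem 9.2]{RWN14} is stated only for test-configuration filtrations but the argument extends), so in a write-up you would want to make the same caveat here rather than assert that everything is already in \cite{RWN14} verbatim.
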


By \emph{weak geodesic} we mean that $\varphi_t^\cF$ satisfies certain homogeneous Monge--Amp\`ere equation in a weak sense (cf. \cite{RWN14}). 
Note that $\varphi_t^\cF$ is in fact \emph{maximal} in the sense of \cite[Definition 6.5]{BBJ18} (see \cite[Example 6.9]{BBJ18}), so we also call $\varphi^{\cF}_t$ the \emph{maximal geodesic ray induced by $\cF$}. Recently it is shown by Darvas--Xia \cite[Proposition 3.6]{DX20} that the upper semi-continuous regularization in \eqref{eq:def-Legendre-transf} is unnecessary. A priori, the regularity of $\varphi^{\cF}_t$ could be rather weak. But when $\cF$ is a filtration induced by an \emph{test configuration} (in the sense of \cite{WN12}), $\varphi^\cF_t$ has $C^{1,1}$ regularity in $t$ and $x$ variables by \cite[Theorem 9.2]{RWN14} and \cite[Theorem 1.2]{CTW18} (see also \cite[Theorem 1.3]{PS10}).

\begin{remark}
When $\cF=\cF_v$ for some $v\in\mathrm{Val}_X$, we put
$\varphi_t^v:=\varphi_t^{\cF_v}$. When $v=\ord_F$ for some prime divisor $F$ over $X$, we also write $\varphi^{F}_t:=\varphi^{\ord_F}_t$. This explains the notation in Theorem \ref{thm:S-p=d_p/t}.
\end{remark}

Note that $\varphi^{\cF}_t(x)$ is convex in $t$.
Dually, one further has
\begin{equation}
\label{eq:psi=inf-varphi}
    \psi^{\cF}_\lambda=\inf_{t\geq0}\{\varphi^{\cF}_t-t\lambda\}.
\end{equation}
See \cite{RWN14} for the proof.

An equivalent way of producing the geodesic ray $\varphi^{\cF}_t$ is by quantization approach. More precisely, for $m\geq1$, let $\{a_{m,i}\}_{1\leq i\leq d_m}$ be the set of jumping numbers of $\cF$ (recall \eqref{eq:def-jump-number}). Now consider the Hermitian inner product 
$$
H_m:=\int_Xh^m(\cdot,\cdot)\omega^n
$$
on $R_m$. By elementary linear algebra one can find an  $H_m$-orthonormal basis $\{s_i\}_{1\leq i\leq d_m}$ of $R_m$ such that
$$
s_i\in\cF^{a_{m,i}}R_m\text{ for each }1\leq i\leq d_m.
$$
Now set
\begin{equation*}
    \label{eq:def-varphi-F-t-m}
    \varphi^{\cF}_{t,m}:=\frac{1}{m}\log\sum_{i=1}^{d_m} e^{a_{m,i}t}|s_i|^2_{h^m},\ t\geq0.
\end{equation*}
One can easily verify that $\varphi^{\cF}_{t,m}$ does not depend on the choice of $\{s_i\}$.
We call $\varphi^{\cF}_{t,m}$ the \emph{Bergman geodesic ray induced by $\cF$}.
Such geodesic ray goes back to the work of Phong--Sturm \cite{PS07} and is used to construct geodesic rays in the space of K\"ahler potentials by approximation. The above Bergman geodesic ray has also been utilized in the recent work \cite{RTZ20} to study quantized $\delta$-invariants. 

\begin{theorem}
\cite[Theorem 9.2]{RWN14}
One has
$$
\varphi^{\cF}_t=\lim_{m\rightarrow+\infty}\bigg[\sup_{k\geq m}\varphi^{\cF}_{t,k}\bigg]^*.
$$
\end{theorem}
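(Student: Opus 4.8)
The plan is to prove the asserted equality by establishing two opposite pointwise inequalities, in each case comparing the Bergman rays $\varphi^{\cF}_{t,m}$ \emph{directly} to the test curve $\psi^{\cF}_\lambda$, and then feeding the result into the Legendre formula $\varphi^{\cF}_t=\big(\sup_\lambda\{\psi^{\cF}_\lambda+t\lambda\}\big)^*$ of Theorem \ref{thm:varphi-t-sup-psi-tau}. The only analytic input is the standard partial Bergman estimate: there are constants $N_m$ with $\frac1m\log N_m\to0$ such that $\sup_X|s|^2_{h^m}\leq N_m\|s\|^2_{H_m}$ for every $s\in R_m$ (the submean value inequality gives $N_m=Cm^n$). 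Combined with $\|s\|^2_{H_m}\leq\vol(X)\sup_X|s|^2_{h^m}$, this shows that for the partial Bergman kernel $B_{\cF^{\lambda m}R_m}:=\sum_i|s_i|^2_{h^m}$ of an $H_m$-orthonormal basis of $\cF^{\lambda m}R_m$ one has $\tfrac1m\log B_{\cF^{\lambda m}R_m}-\tfrac1m\log\vol(X)\leq\psi^{\cF}_{\lambda,m}\leq\tfrac1m\log B_{\cF^{\lambda m}R_m}+\tfrac1m\log N_m$; that is, $\psi^{\cF}_{\lambda,m}$ and $\tfrac1m\log B_{\cF^{\lambda m}R_m}$ agree up to an error uniform in $\lambda$ that vanishes as $m\to\infty$.

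\emph{Upper bound.} Pick an $H_m$-orthonormal basis $\{s_i\}$ with $s_i\in\cF^{a_{m,i}}R_m$ and set $\lambda_{m,i}:=a_{m,i}/m$. Since $\|s_i\|_{H_m}=1$, the estimate above gives $|s_i|^2_{h^m}\leq N_m e^{m\psi^{\cF}_{\lambda_{m,i},m}}$, hence
$$\varphi^{\cF}_{t,m}=\tfrac1m\log\sum_{i=1}^{d_m}e^{a_{m,i}t}|s_i|^2_{h^m}\leq\tfrac1m\log(d_mN_m)+\sup_{\lambda}\{\psi^{\cF}_{\lambda,m}+t\lambda\}.$$
It remains to show $\limsup_m\sup_\lambda\{\psi^{\cF}_{\lambda,m}(x)+t\lambda\}\leq\sup_\lambda\{\psi^{\cF}_\lambda(x)+t\lambda\}$ for every $x$. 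By linear boundedness the supremum is taken over $\lambda\in[0,C]$; choosing $\lambda_m\in[0,C]$ nearly optimal and passing to a subsequence with $\lambda_{m_k}\to\lambda_*$, then using that $\psi^{\cF}_{\lambda,m}$ is non-increasing in $\lambda$ together with $\limsup_m\psi^{\cF}_{\lambda,m}\leq\psi^{\cF}_\lambda$ (immediate from the definition of $\psi^{\cF}_\lambda$ as a regularized limit), we obtain for every $\delta>0$
$$\limsup_k\sup_\lambda\{\psi^{\cF}_{\lambda,m_k}(x)+t\lambda\}\leq\psi^{\cF}_{\lambda_*-\delta}(x)+t\lambda_*\leq\sup_\lambda\{\psi^{\cF}_\lambda(x)+t\lambda\}+t\delta,$$
and letting $\delta\to0$ finishes this step. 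As $\tfrac1m\log(d_mN_m)\to0$, we conclude $\limsup_m\varphi^{\cF}_{t,m}\leq\sup_\lambda\{\psi^{\cF}_\lambda+t\lambda\}$ everywhere, so $\lim_m[\sup_{k\geq m}\varphi^{\cF}_{t,k}]^*\leq\big(\sup_\lambda\{\psi^{\cF}_\lambda+t\lambda\}\big)^*=\varphi^{\cF}_t$.

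\emph{Lower bound.} Fix $\lambda\geq0$ and $t\geq0$. Keeping only the indices with $\lambda_{m,i}\geq\lambda$ and using $e^{a_{m,i}t}\geq e^{\lambda mt}$ gives $\varphi^{\cF}_{t,m}\geq\lambda t+\tfrac1m\log B_{\cF^{\lambda m}R_m}\geq\lambda t+\psi^{\cF}_{\lambda,m}-\tfrac1m\log N_m$. Taking $\sup_{k\geq m}$, then the upper semicontinuous regularization, then $m\to\infty$, and using $\big[\limsup_k\psi^{\cF}_{\lambda,k}\big]^*=\psi^{\cF}_\lambda$, we get $\lim_m[\sup_{k\geq m}\varphi^{\cF}_{t,k}]^*\geq\lambda t+\psi^{\cF}_\lambda$ for every $\lambda$ (the constraint $\lambda\le0$ is weaker since there $\psi^{\cF}_\lambda=\psi^{\cF}_0$). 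Taking the supremum over $\lambda$ and using that the left-hand side is already upper semicontinuous, $\lim_m[\sup_{k\geq m}\varphi^{\cF}_{t,k}]^*\geq\big(\sup_\lambda\{\psi^{\cF}_\lambda+t\lambda\}\big)^*=\varphi^{\cF}_t$. Together with the upper bound this proves the claim.

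The main obstacle is the upper bound, and precisely the interchange of $\limsup_m$ with the supremum over $\lambda$: these do not commute in the favorable direction, and one must exploit the monotonicity of the quantized test curve in $\lambda$ together with a compactness argument on the near-optimal parameters $\lambda_m$ to push the limit through. A secondary, purely bookkeeping difficulty is keeping track of the various upper semicontinuous regularizations so that $[\,\cdot\,]^*$ is applied consistently and the identity $\big[\limsup_k\psi^{\cF}_{\lambda,k}\big]^*=\psi^{\cF}_\lambda$ is invoked only where it is valid.
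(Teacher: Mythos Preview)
The paper does not give its own proof of this statement: it is quoted verbatim from \cite[Theorem 9.2]{RWN14} and treated as a known input, with only the one-line remark that Ross--Witt Nystr\"om's argument, though stated for test-configuration filtrations, goes through for general linearly bounded filtrations. So there is no in-paper proof to compare against.

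Your argument is essentially a reconstruction of the Ross--Witt Nystr\"om proof and is correct in outline. Both directions are routed through the test curve $\psi^{\cF}_\lambda$ and the Legendre formula of Theorem~\ref{thm:varphi-t-sup-psi-tau}, using the standard comparison between the sup-normalized envelope $\psi^{\cF}_{\lambda,m}$ and the partial Bergman kernel $\tfrac1m\log B_{\cF^{\lambda m}R_m}$; your lower bound is exactly what one finds in \cite{RWN14}. For the upper bound you work harder than necessary: because the filtration is multiplicative, the sequence $m\mapsto m\,\psi^{\cF}_{\lambda,m}(x)$ is superadditive, so $\psi^{\cF}_{\lambda,m}\leq\sup_k\psi^{\cF}_{\lambda,k}=\lim_k\psi^{\cF}_{\lambda,k}\leq\psi^{\cF}_\lambda$ holds for \emph{every} $m$ and every $\lambda$, not just asymptotically. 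This gives $\sup_\lambda\{\psi^{\cF}_{\lambda,m}+t\lambda\}\leq\sup_\lambda\{\psi^{\cF}_\lambda+t\lambda\}$ directly, and the compactness/subsequence extraction on the near-optimal $\lambda_m$ (which you correctly flag as the delicate step) can be dispensed with entirely.
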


We remark that although \cite[Theorem 9.2]{RWN14} is only stated for filtrations that are induced from test configurations, one can easily verify that the argument therein works for general filtrations. 

The next standard result shows that $\varphi^\cF_t$ has linear growth.
\begin{lemma}
\label{lem:varphi-F-t-linearly-growth}
One has
$$
0\leq\varphi^\cF_t\leq T(L,\cF)t\text{ for any }t\geq0.
$$
\end{lemma}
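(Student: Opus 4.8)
The plan is to read both inequalities off the Bergman (quantized) approximation $\varphi^{\cF}_t=\lim_{m\to\infty}\big[\sup_{k\geq m}\varphi^{\cF}_{t,k}\big]^*$ stated just above, the advantage being that each Bergman ray $\varphi^{\cF}_{t,m}=\frac1m\log\sum_{i=1}^{d_m}e^{a_{m,i}t}|s_i|^2_{h^m}$ is completely explicit. The only genuinely analytic input is the classical uniform convergence
$$
\tfrac1m\log B_m\longrightarrow 0 \quad\text{uniformly on }X,\qquad B_m:=\sum_{i=1}^{d_m}|s_i|^2_{h^m},
$$
for an $H_m$-orthonormal basis $\{s_i\}$ (Tian/Zelditch/Catlin; or, by hand, the sub-mean-value inequality for $|s|^2_{h^m}$ together with Tian's peak sections). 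Here I use, for each $m$, the orthonormal basis adapted to $\cF$ as in the setup, with $s_i\in\cF^{a_{m,i}}R_m$ and $0\le a_{m,1}\le\cdots\le a_{m,d_m}=mT_m(L,\cF)$.

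For the lower bound I would note that $a_{m,i}\geq 0$ and $t\geq 0$ force $e^{a_{m,i}t}\geq 1$, hence $\varphi^{\cF}_{t,m}\geq \frac1m\log B_m$ pointwise. Given $\varepsilon>0$, for $m$ large this is $\geq -\varepsilon$, so $\sup_{k\geq m}\varphi^{\cF}_{t,k}\geq -\varepsilon$ and therefore $\big[\sup_{k\geq m}\varphi^{\cF}_{t,k}\big]^*\geq -\varepsilon$ as well. Letting $m\to\infty$ and then $\varepsilon\to 0$ gives $\varphi^{\cF}_t\geq 0$.

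For the upper bound I would use instead $a_{m,i}\leq a_{m,d_m}=mT_m(L,\cF)$, so that
$$
\varphi^{\cF}_{t,m}\leq T_m(L,\cF)\,t+\tfrac1m\log B_m.
$$
Since $T_m(L,\cF)\to T(L,\cF)$, for any $\varepsilon>0$ and all $m$ large the right-hand side is $\leq (T(L,\cF)+\varepsilon)t+\varepsilon$, a constant in $x$. Passing to $\sup_{k\geq m}$, then to the upper semicontinuous regularization (which cannot increase a function already bounded above by a constant), and then to the limit in $m$ yields $\varphi^{\cF}_t\leq (T(L,\cF)+\varepsilon)t+\varepsilon$, and $\varepsilon\to 0$ finishes it.

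The only non-formal step is the uniform asymptotic $\frac1m\log B_m\to 0$; everything else is bookkeeping with the machinery already in place. As an alternative (and to make the upper bound completely soft), one can argue directly from the test curve: by monotonicity and the trivial bound $\psi^{\cF}_{0,m}\le 0$ one gets $\psi^{\cF}_\lambda\le 0$ for all $\lambda$, while $\psi^{\cF}_\lambda\equiv-\infty$ for $\lambda>T(L,\cF)$ because then $\cF^{\lambda m}R_m=\{0\}$ for $m\gg 0$; hence $\psi^{\cF}_\lambda+t\lambda\le T(L,\cF)t$ for every $\lambda$, and $\varphi^{\cF}_t=\big(\sup_\lambda\{\psi^{\cF}_\lambda+t\lambda\}\big)^*\le T(L,\cF)t$. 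The lower bound, in this language, reduces to $\psi^{\cF}_0=0$ (taking $\lambda=0$ in the Legendre transform), which is exactly the same Bergman fact.
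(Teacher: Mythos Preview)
Your proof is correct and follows essentially the same route as the paper's: sandwich the Bergman ray $\varphi^{\cF}_{t,m}$ between $\tfrac1m\log B_m$ and $\tfrac1m\log B_m+T_m(L,\cF)t$, invoke the Tian--Yau--Zelditch expansion for $\tfrac1m\log B_m\to 0$, and pass to the limit via the approximation theorem $\varphi^{\cF}_t=\lim_m\big[\sup_{k\ge m}\varphi^{\cF}_{t,k}\big]^*$. Your alternative argument through the test curve (Legendre transform) is a nice supplement not present in the paper, though as you note the input $\psi^{\cF}_0=0$ still relies on the same Bergman asymptotic.
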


\begin{proof}
We clearly have
$$
\frac{1}{m}\log\sum_{i=1}^{d_m}|s_i|^2_{h^m}\leq\varphi^{\cF}_{t,m}\leq\frac{1}{m}\log\sum_{i=1}^{d_m}|s_i|^2_{h^m}+T_m(L,\cF)t.
$$
Meanwhile, the standard Tian--Yau--Zelditch expansion implies that
$$
\lim_{m\rightarrow\infty}\frac{1}{m}\log\sum_{i=1}^{d_m}|s_i|^2_{h^m}=0.
$$
So the assertion follows from the previous theorem. 
\end{proof}

\begin{lemma}
\label{lem:varphi-F=varphi-F-NN}
One has
$$
\varphi^{\cF_\NN}_t=\varphi^\cF_t.
$$
\end{lemma}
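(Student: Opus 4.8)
\textbf{Proof proposal for Lemma \ref{lem:varphi-F=varphi-F-NN}.}

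The plan is to compare the two geodesic rays through their defining test curves and exploit the Legendre duality \eqref{eq:def-Legendre-transf}. Recall that by \eqref{eq:def-F-N} one has $\cF_\NN^\lambda R_m = \cF^{\lceil\lambda\rceil}R_m$, so the jumping numbers satisfy $a_{m,j}(\cF_\NN)=\lfloor a_{m,j}(\cF)\rfloor$, which differ from those of $\cF$ by at most $1$. First I would show that the test curves agree: $\psi^{\cF_\NN}_\lambda=\psi^{\cF}_\lambda$ for every $\lambda\in\RR$. The inclusion $\cF_\NN^\lambda R_m = \cF^{\lceil\lambda\rceil}R_m\subseteq\cF^\lambda R_m$ gives $\psi^{\cF_\NN}_{\lambda,m}\leq\psi^{\cF}_{\lambda,m}$ immediately, hence $\psi^{\cF_\NN}_\lambda\leq\psi^{\cF}_\lambda$. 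For the reverse inequality, note that for any $\varepsilon>0$ and $m$ large, $\cF^{\lambda m}R_m\subseteq\cF^{\lceil(\lambda-\varepsilon)m\rceil}R_m=\cF_\NN^{(\lambda-\varepsilon)m}R_m$ once $m\geq 1/\varepsilon$ (since then $\lceil(\lambda-\varepsilon)m\rceil\leq\lambda m$). Therefore $\psi^{\cF}_{\lambda,m}\leq\psi^{\cF_\NN}_{\lambda-\varepsilon,m}$ for such $m$, and passing to the limit and taking usc regularizations yields $\psi^{\cF}_\lambda\leq\psi^{\cF_\NN}_{\lambda-\varepsilon}$; letting $\varepsilon\searrow0$ and using that $\psi^{\cF_\NN}_\mu$ is non-increasing and left-continuous in $\mu$ (being a decreasing concave limit, or directly from property (2) of filtrations applied to $\cF_\NN$) gives $\psi^{\cF}_\lambda\leq\psi^{\cF_\NN}_\lambda$.

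Once the equality of test curves is in hand, the equality of the geodesic rays follows directly by applying the Legendre transform \eqref{eq:def-Legendre-transf}: since $\varphi^{\cF}_t=\big(\sup_{\lambda}\{\psi^{\cF}_\lambda+t\lambda\}\big)^*$ depends only on the function $\lambda\mapsto\psi^{\cF}_\lambda$, and this function coincides with $\lambda\mapsto\psi^{\cF_\NN}_\lambda$, we get $\varphi^{\cF_\NN}_t=\varphi^{\cF}_t$ for all $t\geq0$.

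The one point requiring a little care — and the main (mild) obstacle — is the left-continuity/limit argument at the end of the first step: one must make sure that the loss of $\varepsilon$ incurred by replacing $\cF$ with $\cF_\NN$ at finite level $m$ genuinely disappears in the limit, i.e. that $\psi^{\cF_\NN}_{\lambda-\varepsilon}\to\psi^{\cF_\NN}_\lambda$ as $\varepsilon\searrow0$. This is exactly property (2) in the definition of a filtration (the filtration is left-continuous), transported to the level of test curves; concretely, $\cF_\NN^{(\lambda-\varepsilon)m}R_m = \cF^{\lceil(\lambda-\varepsilon)m\rceil}R_m$ stabilizes to $\cF^{\lceil\lambda m\rceil}R_m=\cF_\NN^{\lambda m}R_m$ once $\varepsilon<1/m$, so for each fixed $m$ the Bergman functions converge, and the concavity of $\lambda\mapsto\psi^{\cF_\NN}_\lambda$ upgrades this to continuity of the limit. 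Alternatively, and perhaps more cleanly, one can bypass this by working directly with the Bergman rays $\varphi^{\cF}_{t,m}$: since $a_{m,i}(\cF_\NN)=\lfloor a_{m,i}(\cF)\rfloor$ and an $H_m$-orthonormal basis adapted to $\cF$ is also adapted to $\cF_\NN$, one has $e^{-t}\,e^{a_{m,i}(\cF)t}\leq e^{a_{m,i}(\cF_\NN)t}\leq e^{a_{m,i}(\cF)t}$ for $t\geq0$, hence $\varphi^{\cF}_{t,m}-\tfrac{t}{m}\leq\varphi^{\cF_\NN}_{t,m}\leq\varphi^{\cF}_{t,m}$; taking $m\to\infty$ with the regularized sup over $k\geq m$ as in the quantization theorem \cite[Theorem 9.2]{RWN14} makes the $t/m$ error vanish and yields $\varphi^{\cF_\NN}_t=\varphi^{\cF}_t$.
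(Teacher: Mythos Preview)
Your proposal is correct, and in fact your ``alternative'' argument via the Bergman geodesic rays is precisely the paper's own proof: the paper observes that an $H_m$-orthonormal basis compatible with $\cF$ is also compatible with $\cF_\NN$, writes down the inequality $\varphi^\cF_{t,m}-\tfrac{t}{m}\leq\varphi^{\cF_\NN}_{t,m}\leq\varphi^\cF_{t,m}$, and concludes via \cite[Theorem 9.2]{RWN14}.

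Your primary approach through the test curves $\psi^\cF_\lambda$ is a genuinely different route. It is valid, but as you yourself flag, it trades the clean $t/m$ error (which vanishes trivially) for an $\varepsilon$-shift in the $\lambda$-variable, and then requires a left-continuity argument for $\mu\mapsto\psi^{\cF_\NN}_\mu$ to close the loop. The concavity of this map does give continuity on the interior of its effective domain, but one should be slightly careful at the right endpoint $\lambda^+$; this can be handled, though it makes the argument longer than the Bergman-ray route. One small notational slip: in your first inclusion you should write $\cF_\NN^{\lambda m}R_m=\cF^{\lceil\lambda m\rceil}R_m\subseteq\cF^{\lambda m}R_m$ rather than $\cF_\NN^{\lambda}R_m$, since $\psi^\cF_{\lambda,m}$ is defined using $\cF^{\lambda m}R_m$; the argument itself is unaffected. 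In summary, both approaches work, and the paper chose the shorter one you list second.
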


\begin{proof}
Recall \eqref{eq:def-F-N} that $\cF_\NN$ is the $\NN$-filtration induced by $\cF$. The jumping numbers of $\cF_\NN$ are given by $\{\lfloor a_{m,i}\rfloor\}_{1\leq i\leq d_m}$. Let as above $\{s_i\}$ be an $H_m$-orthonormal basis such that $s_i\in\cF^{a_{m,i}}R_m$ for any $1\leq i\leq d_m$. Then observe that
$$
s_i\in\cF^{a_{m,i}}R_m\subset\cF^{\lfloor a_{m,i}\rfloor}R_m=\cF_\NN^{\lfloor a_{m,i}\rfloor}R_m\text{ for any }1\leq i\leq d_m.
$$
So $\{s_i\}$ is also compatible with $\cF_\NN$ and we have
$$
\varphi^{\cF_\NN}_{t,m}=\frac{1}{m}\log\sum_{i=1}^{d_m}e^{\lfloor a_{m,i}\rfloor t}|s_i|^2_{h^m}.
$$
Then clearly
$$
\varphi^\cF_{t,m}-\frac{t}{m}\leq\varphi^{\cF_\NN}_{t,m}\leq\varphi^\cF_{t,m}.
$$
So the assertion follows from the previous theorem.
\end{proof}

The main contribution of this section is the following result, which includes Theorem \ref{thm:S-p=d_p/t} as a special case. 

\begin{theorem}
\label{thm:S-p=d_p/t-filtration}
For any linearly bounded filtration $\cF$ one has
$$
S^{(p)}(L,\cF)^{1/p}=\frac{d_p(0,\varphi^{\cF}_t)}{t}\text{ for all }t>0,
$$
where $d_p$ denotes the Finsler metric introduced by Darvas (see \eqref{eq:def-d-p}).
\end{theorem}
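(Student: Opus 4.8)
The plan is to reduce the statement to a finite-dimensional (Bergman) computation and then pass to the limit, exploiting the quantization results recalled above. Recall the Darvas $d_p$-metric: for a geodesic ray $\varphi_t$ emanating from $0$,
$$
d_p(0,\varphi_t)=\left(\int_X|\dot\varphi_0|^p\,\omega^n\right)^{1/p}\Big/\vol(L)^{1/p}
$$
(up to the normalization of \eqref{eq:def-d-p}), and more importantly, along a geodesic ray the function $t\mapsto d_p(0,\varphi_t)$ is \emph{linear}, so it suffices to prove the identity for one (equivalently all) $t>0$, or to compute the asymptotic slope $\lim_{t\to\infty}d_p(0,\varphi^{\cF}_t)/t$. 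First I would record that, by Lemma \ref{lem:varphi-F=varphi-F-NN}, we may replace $\cF$ by $\cF_\NN$ and hence assume all jumping numbers are integers; this is convenient when working with the Bergman rays $\varphi^{\cF}_{t,m}$.

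The main step is to compute $d_p(0,\varphi^{\cF}_{t,m})$ for the Bergman ray at level $m$ and show it converges, after dividing by $t$, to $S^{(p)}(L,\cF)^{1/p}$ as $m\to\infty$. For the Bergman ray $\varphi^{\cF}_{t,m}=\frac1m\log\sum_i e^{a_{m,i}t}|s_i|^2_{h^m}$, the time-derivative at a point is a convex combination $\dot\varphi^{\cF}_{t,m}(x)=\sum_i \frac{a_{m,i}}{m}\,w_i(x)$ with weights $w_i(x)=e^{a_{m,i}t}|s_i|^2_{h^m}(x)/\sum_j e^{a_{m,j}t}|s_j|^2_{h^m}(x)$ summing to $1$. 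Here the key analytic input is the partition-of-unity / Bergman concentration phenomenon: as $m\to\infty$ the measures $|s_i|^2_{h^m}\omega^n/\!\int_X|s_i|^2_{h^m}\omega^n$ behave like a ``uniform sampling'' of $X$, so that $\frac1{\vol(L)}\int_X\big(\dot\varphi^{\cF}_{t,m}\big)^p\omega^n$ is asymptotically the same as $\frac1{d_m}\sum_i\big(\tfrac{a_{m,i}}{m}\big)^p$ — which is exactly $S^{(p)}_m(L,\cF)$ (modulo the rescaling by $t$, which is genuinely linear only in the $t\to\infty$ or appropriately rescaled limit; more precisely one should run the argument with $\varphi^{\cF}_{t,m}$ replaced by its slope and use that $d_p(0,\varphi_t)/t$ is $t$-independent to fix the constant). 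Concretely I would (i) use the Tian--Yau--Zelditch expansion (as in the proof of Lemma \ref{lem:varphi-F-t-linearly-growth}) to control the normalizing Bergman kernel $\frac1m\log\sum_i|s_i|^2_{h^m}\to0$, (ii) bound $\dot\varphi^{\cF}_{t,m}$ above and below by the extreme jumping numbers to get uniform integrability, and (iii) apply the main estimate of Hisamoto \cite{His16}, which relates the $L^p$-norm of the derivative of the Bergman ray to the $p$-th moment of the jumping numbers — this is precisely the ``$L^p$'' refinement of the energy-at-infinity computation and is cited in the introduction as the engine of the proof. Then Proposition \ref{prop:S-p-m->S-p}(1) gives $S^{(p)}_m(L,\cF)\to S^{(p)}(L,\cF)$, and Lemma \ref{lem:S-p-formula} / the Newton--Okounkov description identifies the limit.

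Finally I would upgrade from the Bergman rays to the geodesic ray $\varphi^{\cF}_t$ itself. By \cite[Theorem 9.2]{RWN14} (which, as remarked, applies to general linearly bounded $\cF$), $\varphi^{\cF}_t=\lim_m[\sup_{k\ge m}\varphi^{\cF}_{t,k}]^*$, and $d_p$ is continuous along such decreasing-in-sup limits of psh rays (continuity of $d_p$ under monotone limits with bounded energy is standard in the Darvas--Xia framework, cf. \cite{DX20}); combined with Lemma \ref{lem:varphi-F-t-linearly-growth}, which gives the uniform bound $0\le\varphi^{\cF}_t\le T(L,\cF)t$ needed for the energy to stay finite, this lets me interchange limit and $d_p$. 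The hard part will be step (iii): making the passage from the finite-level $L^p$-energy of the Bergman ray to the $p$-th moment $\frac1{d_m}\sum_i(a_{m,i}/m)^p$ fully rigorous, i.e. quantifying the ``uniform sampling'' heuristic. This is exactly where I would lean on \cite{His16} and the non-Archimedean dictionary of \cite{BHJ17,BoJ18,BoJ18a} to identify the Monge--Amp\`ere energy-type functional at infinity with the integral $\int_0^{\tau}p x^{p-1}\vol(V^t_\bullet)\,dx/\vol(L)$; everything else (the reductions via Lemmas \ref{lem:varphi-F=varphi-F-NN} and \ref{lem:varphi-F-t-linearly-growth}, linearity in $t$, and the limit $S^{(p)}_m\to S^{(p)}$) is comparatively routine.
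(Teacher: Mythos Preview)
Your overall plan --- reduce to an $\NN$-filtration via Lemma~\ref{lem:varphi-F=varphi-F-NN}, invoke Hisamoto, and pass to a limit --- is in the right spirit, but the approximation scheme you propose has a genuine gap. You want to compute $d_p(0,\varphi^{\cF}_{t,m})$ for the Bergman ray and claim $\frac{1}{\vol(L)}\int_X(\dot\varphi^{\cF}_{t,m})^p\omega^n\approx S^{(p)}_m(L,\cF)$. The problem is twofold. First, the Bergman ray $\varphi^{\cF}_{t,m}$ is \emph{not} a weak geodesic in $\cE^p(X,\omega)$, so $d_p(0,\varphi^{\cF}_{t,m})$ is not given by $t$ times the $L^p$-norm of its time derivative; the distance is computed along the actual infinite-dimensional geodesic joining $0$ and $\varphi^{\cF}_{t,m}$, which is a different curve. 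Second, even granting the heuristic, the measure you integrate against is wrong: $d_p$ involves $(\omega+dd^c\varphi_t)^n$, not $\omega^n$, and for $p>1$ Jensen's inequality makes $\int_X(\sum_i\frac{a_{m,i}}{m}w_i)^p\omega^n$ strictly smaller than $\sum_i(\frac{a_{m,i}}{m})^p\int_Xw_i\,\omega^n$ with a gap that does not obviously vanish. Hisamoto's result is not the ``$L^p$ norm of the Bergman derivative against $\omega^n$'' statement you describe; it is the weak convergence $\frac{n!}{m^n}\sum_i\delta_{a_{m,i}/m}\to(\dot\varphi^\cF_t)_*(\omega+dd^c\varphi^\cF_t)^n$, i.e.\ a statement about the \emph{limiting} geodesic and its Monge--Amp\`ere measure.

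The paper avoids both issues by approximating differently. It does not work with the Bergman rays $\varphi^{\cF}_{t,m}$ at all for the $d_p$ computation. Instead it approximates $\cF$ by \emph{finitely generated} $\NN$-filtrations $\cF_m$ (explicitly constructed from the level-$m$ piece), each of which arises from a test configuration; the associated maximal geodesic rays $\varphi^{\cF_m}_t$ are then genuine $C^{1,1}$ weak geodesics, so $d_p(0,\varphi^{\cF_m}_t)=t\big(\frac{1}{V}\int_X(\dot\varphi^{\cF_m}_t)^p(\omega+dd^c\varphi^{\cF_m}_t)^n\big)^{1/p}$ holds honestly, and Hisamoto's weak convergence identifies the right-hand side with $S^{(p)}(L,\cF_m)^{1/p}$ (Proposition~\ref{prop:S-p=p-norm}). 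The passage to the limit then requires two separate convergences: $S^{(p)}(L,\cF_m)\to S^{(p)}(L,\cF)$ (Lemma~\ref{lem:approx-S-p-by-F-m}, a uniform estimate from Proposition~\ref{prop:S-p-m->S-p}), and $d_p(\varphi^{\cF_m}_t,\varphi^\cF_t)\to 0$, which is the most delicate step (Proposition~\ref{prop:d-p-phi-F-t-approxi}) and uses the non-Archimedean machinery of \cite{BHJ17,BoJ18,BoJ18a} to show that $\varphi^{\cF_m}_t$ increases to $\varphi^\cF_t$ with matching Monge--Amp\`ere energy, hence converges in every $d_p$. Your final paragraph gestures at these non-Archimedean ingredients, but they enter for a different purpose than you anticipate: not to identify an ``energy at infinity'' for Bergman rays, but to establish $d_p$-convergence of one family of maximal geodesic rays to another.
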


The proof will be divided into several steps. Firstly, by Proposition \ref{prop:S-p-m->S-p} and Lemma \ref{lem:varphi-F=varphi-F-NN}, we may assume without loss of generality that $\cF$ is an $\NN$-filtration. Then we will prove Theorem \ref{thm:S-p=d_p/t-filtration} under the additional assumption that
$$
\varphi^\cF_t\text{ has }C^1\text{ regularity in both }x\text{ and }t\text{ variables.}
$$
As we have mentioned, this regularity assumption automatically holds when $\cF$ is induced by a test configuration, in which case $\cF$ is a \emph{finitely generated} $\NN$-filtrations (see \cite[Section 2.5]{BHJ17}). 
We will eventually drop this assumption as any linearly bounded $\NN$-filtration can be approximated by a sequence of finitely generated $\NN$-filtrations in a suitable sense.

Note that the time derivative $\dot\varphi^\cF_t(x)$ is well-defined for any point $x\in X$ and $t\geq0$ when $\varphi^\cF_t$ is $C^1$.

\begin{lemma}
Assume that $\varphi^\cF_t$ has $C^1$ regularity.
Then for each point $x\in X$ and $t\geq0$, $\dot\varphi^\cF_t(x)\geq0$.
\end{lemma}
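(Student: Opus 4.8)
The plan is to show that $\dot\varphi^\cF_t(x) \geq 0$ by using the monotonicity of the filtration, encoded through the Bergman approximation $\varphi^\cF_{t,m}$. First I would recall that the time derivative of the Bergman geodesic ray is
\[
\dot\varphi^\cF_{t,m}(x) = \frac{\sum_{i=1}^{d_m} \frac{a_{m,i}}{m}\, e^{a_{m,i}t}\, |s_i|^2_{h^m}(x)}{\sum_{i=1}^{d_m} e^{a_{m,i}t}\, |s_i|^2_{h^m}(x)},
\]
which is a weighted average of the normalized jumping numbers $a_{m,i}/m \geq 0$, hence is itself $\geq 0$ for every $x$, $t\geq 0$ and $m$. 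In fact, since all jumping numbers satisfy $0 \leq a_{m,i}/m \leq T_m(L,\cF)$, we get $0 \leq \dot\varphi^\cF_{t,m}(x) \leq T_m(L,\cF)$, and moreover $t\mapsto\varphi^\cF_{t,m}(x)$ is convex, so $\dot\varphi^\cF_{t,m}(x)$ is non-decreasing in $t$.

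Next I would pass to the limit $m\to\infty$. By the theorem of \cite{RWN14} quoted above, $\varphi^\cF_t = \lim_m \big[\sup_{k\geq m}\varphi^\cF_{t,k}\big]^*$, and under our standing $C^1$ assumption $\varphi^\cF_t$ is genuinely differentiable in $t$. The cleanest route is to exploit convexity in $t$: for fixed $x$ and $t>0$, and $0 < s < t < u$, convexity of $\varphi^\cF_{\cdot,m}(x)$ gives
\[
\frac{\varphi^\cF_{t,m}(x)-\varphi^\cF_{s,m}(x)}{t-s} \;\leq\; \dot\varphi^\cF_{t,m}(x) \;\leq\; \frac{\varphi^\cF_{u,m}(x)-\varphi^\cF_{t,m}(x)}{u-t},
\]
and all three quantities lie in $[0,T_m(L,\cF)]$. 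Letting $m\to\infty$ (using that the regularized $\sup$-limit recovers $\varphi^\cF$, together with Lemma \ref{lem:varphi-F-t-linearly-growth} to control the limit pointwise, and that $\varphi^\cF_{\cdot}(x)$ is convex), one gets that $\varphi^\cF_{\cdot}(x)$ is a monotone limit (up to regularization) of non-decreasing convex functions bounded below by $0$; hence $\varphi^\cF_{\cdot}(x)$ is itself non-decreasing in $t$ wherever it is defined, and in particular $\dot\varphi^\cF_t(x)\geq 0$. Alternatively, and more directly: since each $\varphi^\cF_{t,m}(x)$ is non-decreasing in $t$ and $\varphi^\cF_0 = 0$, one has $\varphi^\cF_{t,m}(x) \geq \varphi^\cF_{0,m}(x)$; but $\varphi^\cF_{0,m}(x) = \frac{1}{m}\log\sum_i |s_i|^2_{h^m}(x) \to 0$ by Tian--Yau--Zelditch, so $\varphi^\cF_t(x) \geq 0 = \varphi^\cF_0(x)$, and more generally $\varphi^\cF_t$ is non-decreasing in $t$, which combined with $C^1$ regularity gives $\dot\varphi^\cF_t(x) \geq 0$.

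The step I expect to be the only real subtlety is justifying the interchange of the limit in $m$ with the derivative (or equivalently, that the monotonicity in $t$ survives the regularized $\sup$-limit). This is handled by not differentiating through the limit at all: one transfers the inequality $\varphi^\cF_{t,m}(x) \geq \varphi^\cF_{s,m}(x)$ for $s\leq t$ directly to $\varphi^\cF_t \geq \varphi^\cF_s$ — which is immediate from the $\sup$-limit characterization and upper semi-continuous regularization, since regularization preserves the pointwise inequality — and only then invokes the $C^1$ hypothesis to conclude $\dot\varphi^\cF_t(x)\geq 0$ from monotonicity in $t$. No delicate uniform convergence of derivatives is needed; the argument is essentially the observation that the test curve construction produces potentials increasing along the ray, reflecting that $\cF^{\lambda m}R_m$ with larger effective weight $\lambda$ only enlarges the relevant extremal function.
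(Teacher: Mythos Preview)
Your argument is correct. The route differs from the paper's: you establish that $t\mapsto\varphi^\cF_t(x)$ is non-decreasing by computing $\dot\varphi^\cF_{t,m}\geq0$ for the Bergman rays and then passing the monotonicity through the regularized $\sup$-limit, after which $C^1$ regularity gives the conclusion. The paper instead uses convexity to reduce to $t=0$, and then invokes the Legendre duality $\psi^\cF_0=\inf_{t\geq0}\varphi^\cF_t$ together with $\psi^\cF_0=\varphi^\cF_0=0$ (the latter by Tian--Yau--Zelditch) to get $\varphi^\cF_t\geq0$ and hence $\dot\varphi^\cF_0\geq0$. Both rest on the same underlying fact that the jumping numbers are non-negative; your approach is more self-contained (it essentially re-proves the lower bound of Lemma~\ref{lem:varphi-F-t-linearly-growth} and a bit more), while the paper's is shorter because the test-curve relation \eqref{eq:psi=inf-varphi} packages the needed inequality without any limit-interchange discussion.
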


\begin{proof}
By convexity $\dot\varphi^\cF_t(x)$ is non-decreasing in $t$, so it suffices to show that
$$
\dot\varphi^\cF_0(x)\geq0.
$$
This follows readily from the construction (see also \cite[Lemma 4.2]{His16}).
Indeed, by $\cF^0R_m=R_m$ and the Tian--Yau--Zelditch expansion, one clearly has
$$\psi^\cF_0(x)=\varphi^\cF_0(x)=0.$$
Thus by \eqref{eq:psi=inf-varphi},
$$
\dot\varphi^\cF_0(x)=\lim_{t\rightarrow 0^+}\frac{\varphi^\cF_t(x)}{t}\geq\lim_{t\rightarrow 0^+}\frac{\psi_0^\cF(x)}{t}=0,
$$
as desired.
\end{proof}

The non-negativity of $\dot\varphi^\cF_t$ makes it convenient to consider the following $L^p$ Finsler speed of the geodesic; see the survey of Darvas \cite{DarvasSurvey} for more information on this subject. When $\varphi^\cF_t$ is $C^1$, set for $p\geq1$
\begin{equation}
\label{eq:L-p-speed}
    ||\dot\varphi^\cF_t||_p:=\bigg(\frac{1}{V}\int_X(\dot\varphi^\cF_t)^p(\omega+dd^c\varphi^\cF_t)^n\bigg)^{1/p}.
\end{equation}
Here $V:=\vol(L)=\int_X\omega^n$ and $(\omega+dd^c\varphi^\cF_t)^n$ is understood as the \emph{non-pluripolar} Monge--Amp\`ere measure defined in \cite{BEGZ10}. Since $\varphi^\cF_t$ has linear growth, $(\omega+dd^c\varphi^\cF_t)^n$ also coincides with the classical definition of Bedford--Taylor \cite{BT76}. As shown in \cite{DarvasSurvey}, \eqref{eq:L-p-speed} is independent of $t$. The $d_p$-distance from $0$ to $\varphi^
{\cF}_t$ is then given by
\begin{equation}
    \label{eq:def-d-p}
    d_p(0,\varphi^\cF_t):=\int_0^t||\dot\varphi^{\cF}_s||_pds.
\end{equation}
When $\varphi^\cF_t$ lacks $C^1$ regularity, one can still make sense of $d_p(0,\varphi^\cF_t)$ by using $C^{1,1}$ decreasing sequence that converges to $\varphi^\cF_t$; see \cite{DarvasSurvey} for detail.

\begin{proposition}
\label{prop:S-p=p-norm}
Assume that $\varphi^\cF_t$ has $C^1$ regularity then one has
$$
S^{(p)}(L,\cF)^{1/p}=||\dot\varphi^\cF_t||_p \text{ for any $t\geq0$}.
$$
\end{proposition}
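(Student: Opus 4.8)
The plan is to compute both sides of the claimed identity via the Bergman/quantization approximation and to match the $m$-level quantities. Since $\|\dot\varphi^{\cF}_t\|_p$ is independent of $t$, I may fix $t>0$ once and for all, and by scaling it suffices to treat, say, $t=1$. On the $S^{(p)}$-side I will use $S^{(p)}(L,\cF)=\lim_m S^{(p)}_m(L,\cF)$ (Proposition \ref{prop:S-p-m->S-p}) together with the jumping-number formula $S^{(p)}_m(L,\cF)=\frac{1}{d_m}\sum_{i=1}^{d_m}(a_{m,i}/m)^p$. On the $d_p$-side I will use $\varphi^{\cF}_t=\lim_{m}[\sup_{k\geq m}\varphi^{\cF}_{t,k}]^*$, where $\varphi^{\cF}_{t,m}=\frac1m\log\sum_i e^{a_{m,i}t}|s_i|^2_{h^m}$ for an $H_m$-orthonormal basis compatible with $\cF$. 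The key computation is that the Bergman speed $\dot\varphi^{\cF}_{t,m}$ and the Bergman measure $(\omega+dd^c\varphi^{\cF}_{t,m})^n$ combine, as $m\to\infty$, so that
\begin{equation*}
\frac{1}{V}\int_X(\dot\varphi^{\cF}_{t,m})^p(\omega+dd^c\varphi^{\cF}_{t,m})^n\ \longrightarrow\ \frac{1}{V}\int_X(\dot\varphi^{\cF}_{t})^p(\omega+dd^c\varphi^{\cF}_{t})^n=\|\dot\varphi^{\cF}_t\|_p^p,
\end{equation*}
and that the left-hand side is asymptotically $\frac{1}{d_m}\sum_i (a_{m,i}/m)^p\,t^p$. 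The latter is the heart of the matter: it is precisely the Bergman-kernel equidistribution statement that underlies Hisamoto's result \cite{His16}, which identifies the limiting measure of $\frac{1}{d_m}\sum_i\delta_{a_{m,i}/m}$ (pushed to $X$ via the Monge--Amp\`ere measures of $\varphi^{\cF}_{t,m}$) with the distribution of $\dot\varphi^{\cF}_t$ against $(\omega+dd^c\varphi^{\cF}_t)^n/V$.

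Concretely, I would proceed as follows. First, invoke the reductions already set up in the excerpt: by Proposition \ref{prop:S-p-m->S-p} and Lemma \ref{lem:varphi-F=varphi-F-NN} assume $\cF$ is a finitely generated $\NN$-filtration so that $\varphi^{\cF}_t$ is $C^1$ in $(x,t)$ (the general case being deferred to the subsequent approximation argument that the author flags). Second, recall from \cite{His16} (or \cite[§4]{His16} / the $d_p$-survey \cite{DarvasSurvey}) that the quantization converges in energy: $\varphi^{\cF}_{t,m}\to\varphi^{\cF}_t$ in $C^0$, hence $(\omega+dd^c\varphi^{\cF}_{t,m})^n\rightharpoonup(\omega+dd^c\varphi^{\cF}_t)^n$ weakly, and $\dot\varphi^{\cF}_{t,m}\to\dot\varphi^{\cF}_t$ in $L^p$ against these measures; this gives the displayed limit of the $p$-th Finsler speed. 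Third, carry out the key algebraic identity at level $m$: writing $p_i:=|s_i|^2_{h^m}e^{a_{m,i}t}/\sum_j|s_j|^2_{h^m}e^{a_{m,j}t}$ one has $\dot\varphi^{\cF}_{t,m}=\frac1m\sum_i a_{m,i}p_i$, and after integrating $(\dot\varphi^{\cF}_{t,m})^p$ against $(\omega+dd^c\varphi^{\cF}_{t,m})^n$ and using the Tian--Yau--Zelditch / Bergman-kernel asymptotics together with Hisamoto's equidistribution one extracts
\begin{equation*}
\lim_{m\to\infty}\frac{1}{V}\int_X(\dot\varphi^{\cF}_{t,m})^p(\omega+dd^c\varphi^{\cF}_{t,m})^n=\lim_{m\to\infty}\frac{1}{d_m}\sum_{i=1}^{d_m}\Big(\frac{a_{m,i}}{m}\Big)^p t^p=S^{(p)}(L,\cF)\,t^p.
\end{equation*}
Combining the second and third steps yields $\|\dot\varphi^{\cF}_t\|_p^p=S^{(p)}(L,\cF)\,t^p$, i.e. $S^{(p)}(L,\cF)^{1/p}=\|\dot\varphi^{\cF}_t\|_p$, since one may rescale to arbitrary $t\geq0$.

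The main obstacle is the third step: justifying that the discrete Bergman measures $\frac1{d_m}\sum_i\delta_{a_{m,i}/m}$ and the sequence $(\omega+dd^c\varphi^{\cF}_{t,m})^n/V$ are asymptotically ``decoupled'' in the right sense, so that integrating the $p$-th power of the speed picks out exactly the $p$-th moment of the jumping numbers. This is genuinely the content of \cite{His16} (phrased there in terms of the Monge--Amp\`ere measure of the geodesic and the Duistermaat--Heckman-type limit measure), and the cleanest route is to cite the relevant theorem of \cite{His16} verbatim in the $L^p$ form and check that its hypotheses (linearly bounded finitely generated $\NN$-filtration, $C^1$ geodesic) are met here, rather than reprove the Bergman-kernel estimates. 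A secondary technical point is the interchange of $\lim_m$ with the $L^p$ integral, which follows from the uniform linear bound $0\leq\varphi^{\cF}_{t,m}\leq T_m(L,\cF)t$ (Lemma \ref{lem:varphi-F-t-linearly-growth} and its proof) giving a uniform $L^\infty$ bound on $\dot\varphi^{\cF}_{t,m}$, hence dominated convergence once the weak convergence of the Monge--Amp\`ere measures is in hand.
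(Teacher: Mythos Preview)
Your approach is essentially the same as the paper's: both reduce to Hisamoto's equidistribution theorem \cite{His16}. The paper's execution is simply more direct. Rather than passing through the Bergman geodesics $\varphi^{\cF}_{t,m}$ on $X$ and arguing separately about convergence of $(\omega+dd^c\varphi^{\cF}_{t,m})^n$ and of $\dot\varphi^{\cF}_{t,m}$, the paper quotes Hisamoto's result in the form of weak convergence of measures on $\RR$,
\[
\frac{n!}{m^n}\sum_{i=1}^{d_m}\delta_{a_{m,i}/m}\ \longrightarrow\ (\dot\varphi^{\cF}_t)_*\bigl(\omega+dd^c\varphi^{\cF}_t\bigr)^n,
\]
and then simply integrates the continuous test function $x\mapsto x^p$ against both sides. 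This bypasses your Step~2 entirely (no need for $C^0$ convergence of $\varphi^{\cF}_{t,m}$, which in any case is not what the cited \cite{RWN14} result gives) and collapses your Step~3 to a single line.

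One genuine slip: the factor $t^p$ in your displayed identity and in the conclusion $\|\dot\varphi^{\cF}_t\|_p^p=S^{(p)}(L,\cF)\,t^p$ is wrong. Since $\dot\varphi^{\cF}_{t,m}=\frac{1}{m}\sum_i a_{m,i}p_i$ carries no $t$-factor, and $\|\dot\varphi^{\cF}_t\|_p$ is $t$-independent, the correct identity is $\|\dot\varphi^{\cF}_t\|_p^p=S^{(p)}(L,\cF)$, with no scaling. Also, the reduction to finitely generated $\NN$-filtrations is not part of this proposition: the hypothesis already grants $C^1$ regularity, and the approximation by test configurations belongs to the proof of the ambient Theorem~\ref{thm:S-p=d_p/t-filtration}, not here.
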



\begin{proof}
We recall the main result of Hisamoto \cite{His16}, who dealt with filtrations that are induced from test configurations. But note that his argument also works for general filtrations that give rise to $C^1$ geodesic rays, from which we deduce that
for each $t\geq0$
$$
\frac{n!}{m^n}\sum_{i=1}^{d_m}\delta_{\frac{a_{m,i}}{m}}\xrightarrow{m\rightarrow+\infty}(\dot\varphi^\cF_t)_*(\omega+dd^c\varphi^\cF_t)^n 
$$
as measures on $\RR$; here recall that $\{a_{m,i}\}$ is the set of jumping numbers of $\cF$.
Thus we obtain that
\begin{equation*}
    \begin{aligned}
        S^{(p)}(L,\cF)&=\lim_{m\rightarrow+\infty}S_m^{(p)}(L,\cF)\\
        &=\lim_{m\rightarrow+\infty}\frac{1}{d_m}\sum_{i=1}^{d_m}\bigg(\frac{a_{m,i}}{m}\bigg)^p\\
        &=\frac{1}{\vol(L)}\int_\RR x^p(\dot\varphi^\cF_t)_*(\omega+dd^c\varphi^\cF_t)^n\\
        &=\frac{1}{V}\int_X(\dot\varphi_t^\cF)^p(\omega+dd^c\varphi^\cF_t)^n.
    \end{aligned}
\end{equation*}
This completes the proof.
\end{proof}

Thus we have shown Theorem \ref{thm:S-p=d_p/t-filtration} under the assumption that $\varphi^\cF_t$ has $C^1$ regularity. Now we show how to drop this assumption by a standard approximation procedure.

\begin{lemma}
\label{lem:approx-S-p-by-F-m}
Let $\cF$ be a linearly bounded filtration of $R$. Let
$\{\cF_m\}_{m\in\NN_{>0}}$ be a sequence of filtrations of $R$ that satisfies
\begin{enumerate}
    \item $\cF^{\lambda}_mR_m=\cF^\lambda R_m$ for any $m\geq1$ and $\lambda\in\RR_{\geq0}$.
    \item $\cF^{\lambda}_mR_k\subset\cF^\lambda R_k$ for any $k\geq m$ and $\lambda\in\RR_{\geq0}$.
\end{enumerate}
Then it holds that
$$
S^{(p)}(L,\cF)=\lim_{m\rightarrow\infty}S^{(p)}(L,\cF_m).
$$
\end{lemma}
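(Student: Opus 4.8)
The plan is to prove separately the two one-sided estimates $\limsup_{m\to\infty}S^{(p)}(L,\cF_m)\le S^{(p)}(L,\cF)$ and $\liminf_{m\to\infty}S^{(p)}(L,\cF_m)\ge S^{(p)}(L,\cF)$. The first is a soft monotonicity observation using only hypothesis~(2); the second uses hypothesis~(1) together with the \emph{uniform} quantization estimate of Proposition~\ref{prop:S-p-m->S-p}(2). Before anything, one checks that each $\cF_m$ is linearly bounded: a linear bound for $\cF$ works in every degree $k\ge m$ by hypothesis~(2), while for the finitely many degrees $k<m$ finite-dimensionality of $R_k$ and axiom~(4) of a filtration supply one. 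This makes $S^{(p)}(L,\cF_m)$ well defined and the estimates of Section~\ref{sec:S-p} applicable to each $\cF_m$.

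For the upper bound, hypothesis~(2) gives $\cF_m^{tk}R_k\subseteq\cF^{tk}R_k$, hence $\dim\cF_m^{tk}R_k\le\dim\cF^{tk}R_k$, for every $t\ge 0$ and every $k\ge m$. Since the volume of a graded linear series is a limit as $k\to\infty$ and thus depends only on the tail, this yields the pointwise inequality $\vol(V^t_\bullet(\cF_m))\le\vol(V^t_\bullet(\cF))$ for all $t\ge 0$, where $V^t_\bullet(\cdot)$ denotes the graded linear series attached to a filtration as in Section~\ref{sec:S-p}. Integrating against $\tfrac{p}{\vol(L)}\,t^{p-1}\,dt$ (both integrands supported on a common bounded interval by linear boundedness) gives $S^{(p)}(L,\cF_m)\le S^{(p)}(L,\cF)$ for every $m$, whence the $\limsup$ estimate.

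For the lower bound, fix $\varepsilon>0$ and let $m_0=m_0(\varepsilon)$ be the threshold furnished by Proposition~\ref{prop:S-p-m->S-p}(2). Applying that proposition to the linearly bounded filtration $\cF_m$ yields $S^{(p)}_m(L,\cF_m)\le(1+\varepsilon)S^{(p)}(L,\cF_m)$ for all $m\ge m_0$. By hypothesis~(1) we have $\cF_m^\lambda R_m=\cF^\lambda R_m$ for all $\lambda$, so $\cF_m$ and $\cF$ have the same level-$m$ jumping numbers and therefore $S^{(p)}_m(L,\cF_m)=S^{(p)}_m(L,\cF)$. Combining, $S^{(p)}_m(L,\cF)\le(1+\varepsilon)S^{(p)}(L,\cF_m)$ for $m\ge m_0$; letting $m\to\infty$ and invoking $S^{(p)}_m(L,\cF)\to S^{(p)}(L,\cF)$ (Proposition~\ref{prop:S-p-m->S-p}(1)) gives $S^{(p)}(L,\cF)\le(1+\varepsilon)\liminf_m S^{(p)}(L,\cF_m)$. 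As $\varepsilon>0$ is arbitrary, this is the desired lower bound, and the two estimates together prove the lemma.

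The genuinely routine points are the monotonicity of the volume function under inclusion of filtrations and the identity $S^{(p)}_m(L,\cF_m)=S^{(p)}_m(L,\cF)$, both close to definitional. The step deserving care — and the only real obstacle — is ensuring that Proposition~\ref{prop:S-p-m->S-p}(2) can be applied to the \emph{moving} family $\{\cF_m\}$: this hinges on the threshold $m_0(\varepsilon)$ there being independent of the filtration, and on the preliminary verification that every $\cF_m$ is itself linearly bounded. Once these are secured, the argument is a straightforward diagonal passage to the limit and involves no further estimates.
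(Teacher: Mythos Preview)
Your proof is correct and follows essentially the same approach as the paper: both use hypothesis~(2) for the upper bound (you via the volume comparison, the paper via $S^{(p)}_k(L,\cF_m)\le S^{(p)}_k(L,\cF)$ for $k\ge m$ and Proposition~\ref{prop:S-p-m->S-p}(1), which is the same monotonicity), and both use hypothesis~(1) together with the uniform quantization estimate of Proposition~\ref{prop:S-p-m->S-p}(2) for the lower bound. Your additional care in verifying linear boundedness of each $\cF_m$ and in emphasizing the uniformity of $m_0(\varepsilon)$ is well placed.
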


\begin{proof}
Observe that each $\cF_m$ is linearly bounded by our assumption. By Proposition \ref{prop:S-p-m->S-p}(2), there exists $\varepsilon_m\rightarrow 0$ as $m\rightarrow\infty$ such that
\begin{equation*}
    \begin{aligned}
        S^{(p)}(L,\cF_m)&\geq(1-\varepsilon_m)S^{(p)}_m(L,\cF_m)=(1-\varepsilon_m)S^{(p)}_m(L,\cF).\\
    \end{aligned}
\end{equation*}
Sending $m\rightarrow\infty$ we find that
$$
\liminf_{m\rightarrow\infty} S^{(p)}(L,\cF_m)\geq S^{(p)}(L,\cF).
$$
On the other hand, Proposition \ref{prop:S-p-m->S-p}(1) yields
$$
S^{(p)}(L,\cF_m)=\lim_{k\rightarrow\infty}S_k^{(p)}(L,\cF_m)\leq\lim_{k\rightarrow\infty}S_k^{(p)}(L,\cF)=S^{(p)}(L,\cF),
$$
so that
$$
\limsup_{m\rightarrow\infty} S^{(p)}(L,\cF_m)\leq S^{(p)}(L,\cF).
$$
This completes the proof. 
\end{proof}

Now given any linearly bounded $\NN$-filtration $\cF$ of $R$, one can construct a sequence of finitely generated $\NN$-filtrations $\{\cF_m\}$ that ``converges'' to $\cF$ as follows (see also \cite{Sz15,BL18b}). For each $m\geq1$, $\cF_m$ is given by:

\begin{enumerate}
    \item For $k<m$,
    \begin{equation*}
    \cF^p_m R_k:=
        \begin{cases}
        R_k\ &\text{ for }p=0,\\
        0\ &\text{ for }p>0.\\
        \end{cases}
    \end{equation*}
    \item For $k=m$,
    $$
    \cF^p_m R_m:=\cF^p R_m\text{ for }p\in\NN.
    $$
    \item For $k>m$ and $p\in\NN$,
    $$
    \cF^p_m R_k:=\sum_{b}\bigg((\cF^1 R_m)^{b_1}\cdot\cdots\cdot(\cF^{mT_m(L,\cF)}R_m)^{b_{mT_m(L,\cF)}}\bigg)\cdot R_{k-m\sum_{i=1}^{mT_m(L,\cF)} b_i},
    $$
    where the sum runs through all $b=(b_1,...,b_{mT_m(L,\cF)})\in\NN^{mT_m(L,\cF)}$ such that
    $\sum_{i=1}^{mT_m(L,\cF)}ib_i\geq p$ and $k\geq m\sum_{i=1}^{mT_m(L,\cF)}b_i$.
\end{enumerate}
Then $\{\cF_m\}$ is a sequence of finitely generated $\NN$-filtrations of $R$ that satisfies the conditions in Lemma \ref{lem:approx-S-p-by-F-m}. So we have
$$
S^{(p)}(L,\cF)=\lim_{m\rightarrow\infty}S^{(p)}(L,\cF_m).
$$
Moreover, $\cF_m$ is induced by test configurations (see \cite[Section 2.5]{BHJ17}) since each $\cF_m$ is finitely generated, and hence the associated maximal geodesic ray $\varphi^{\cF_m}_t$ has $C^1$
regularity (see \cite{PS10,CTW18}). So Proposition \ref{prop:S-p=p-norm} yields
$$
S^{(p)}(L,\cF_m)^{1/p}=\frac{d_p(0,\varphi^{\cF_m}_t)}{t}\text{ for any }t> 0.
$$
Then to finish the proof of Theorem \ref{thm:S-p=d_p/t-filtration}, it remains to show the following

\begin{proposition}
\label{prop:d-p-phi-F-t-approxi}
There exists a subsequence $\{\varphi^{\cF_{m_j}}_t\}_{j\in\NN}$ such that for any $p\geq1$ and $t\geq0$,
$$
\lim_{j\rightarrow0}d_p(\varphi^{\cF_{m_j}}_t,\varphi^\cF_t)=0.
$$
\end{proposition}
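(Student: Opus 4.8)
The plan is to sandwich each time-slice $\varphi^{\cF_m}_t$ between two monotone families both converging to $\varphi^{\cF}_t$, and then invoke the standard facts that $d_p$ is continuous along monotone sequences in the finite-energy class and is monotone with respect to the natural order. The upper side of the sandwich is immediate; the real work is in producing an \emph{increasing} lower envelope, and this is exactly what forces the passage to a subsequence.

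First I would record, from the construction of $\cF_m$ (items (1)--(3) above), that $\cF_m^\mu R_k\subseteq\cF^\mu R_k$ for all $k\ge m$ while $\cF_m^\mu R_m=\cF^\mu R_m$, and that all $\cF_m$ are linearly bounded with one common constant $C$ (the linear bound of $\cF$), so $0\le\varphi^{\cF_m}_t\le Ct$ uniformly; the inclusions give $\psi^{\cF_m}_{\lambda,k}\le\psi^{\cF}_{\lambda,k}$ for $k\ge m$, hence $\psi^{\cF_m}_\lambda\le\psi^{\cF}_\lambda$, and after the Legendre transform \eqref{eq:def-Legendre-transf} this yields $\varphi^{\cF_m}_t\le\varphi^{\cF}_t$. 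For the lower envelope, note that since each $\cF_m$ is a genuine multiplicative filtration of $R$, the quantized test curves satisfy the Fekete-type superadditivity $(k_1+k_2)\psi^{\cF_m}_{\lambda,k_1+k_2}\ge k_1\psi^{\cF_m}_{\lambda,k_1}+k_2\psi^{\cF_m}_{\lambda,k_2}$ (multiply two admissible sections), so $\psi^{\cF_m}_{\lambda,k}$ converges up to $\sup_k\psi^{\cF_m}_{\lambda,k}$ and in particular $\psi^{\cF_m}_\lambda\ge\psi^{\cF_m}_{\lambda,m}=\psi^{\cF}_{\lambda,m}$, the last equality because $\cF_m$ agrees with $\cF$ in degree $m$. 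The same superadditivity for $\cF$ gives $\psi^{\cF}_{\lambda,k}\to\psi^{\cF}_\lambda$ a.e., and along the factorial subsequence $m_j:=j!$ (for which $m_j\mid m_{j+1}$) the sequence $j\mapsto\psi^{\cF}_{\lambda,m_j}$ is actually non-decreasing. I would then set
$$
\Lambda_j:=\Big(\sup_{\lambda\in\RR}\{\psi^{\cF}_{\lambda,m_j}+t\lambda\}\Big)^*,
$$
an increasing sequence of $\omega$-psh functions, and verify by a monotone-convergence argument for the Legendre transform (as in \cite{RWN14,DX20}) that $\Lambda_j\nearrow\varphi^{\cF}_t$ a.e. Chaining $\psi^{\cF}_{\lambda,m_j}=\psi^{\cF_{m_j}}_{\lambda,m_j}\le\psi^{\cF_{m_j}}_\lambda\le\psi^{\cF}_\lambda$ and passing to the Legendre transform produces the desired sandwich
$$
\Lambda_j\ \le\ \varphi^{\cF_{m_j}}_t\ \le\ \varphi^{\cF}_t,\qquad\Lambda_j\nearrow\varphi^{\cF}_t.
$$

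To conclude, $\varphi^{\cF}_t$ has bounded potential, hence lies in the finite $p$-energy class for every $p\ge1$, so $d_p(\Lambda_j,\varphi^{\cF}_t)\to0$ by Darvas's monotone-convergence theorem for $d_p$ \cite{DarvasSurvey}. The order-monotonicity of $d_p$ applied to the nesting $\Lambda_j\le\varphi^{\cF_{m_j}}_t\le\varphi^{\cF}_t$ gives $d_p(\Lambda_j,\varphi^{\cF_{m_j}}_t)\le d_p(\Lambda_j,\varphi^{\cF}_t)$, so the triangle inequality forces $d_p(\varphi^{\cF_{m_j}}_t,\varphi^{\cF}_t)\to0$ for every $p\ge1$ and $t\ge0$ (the case $t=0$ being trivial); crucially the subsequence $m_j=j!$ is fixed once and for all, independently of $p$ and $t$.

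The hard part will be Step 2 --- manufacturing an \emph{increasing} lower envelope for $\varphi^{\cF_{m_j}}_t$. This is where the superadditivity of the quantized test curves is indispensable and where it forces the factorial subsequence (so that $\psi^{\cF}_{\lambda,m_j}$ is genuinely monotone rather than merely Cesàro-convergent), and where one exploits that $\cF_{m_j}$ literally coincides with $\cF$ in degree $m_j$, so that its level-$m_j$ quantization reproduces $\psi^{\cF}_{\lambda,m_j}$ on the nose. Everything else --- continuity of the Legendre transform under monotone limits, $d_p$-convergence of monotone sequences, and order-monotonicity of $d_p$ --- is standard pluripotential theory drawn from \cite{RWN14,DX20,DarvasSurvey}.
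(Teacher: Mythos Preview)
Your proof is correct and takes a genuinely different route from the paper's. The paper works through the non-Archimedean side: it introduces the Fubini--Study functions $u^m=\mathrm{FS}(\|\cdot\|_{m,\bullet})$ and $u=\mathrm{FS}(\|\cdot\|_\bullet)$ on $X^{\mathrm{an}}$, observes that $u^m\nearrow u$ as a net, and then uses the convergence $E^{\mathrm{NA}}(u^m)\to E^{\mathrm{NA}}(u)$ from \cite{BoJ18a} together with the slope formula $E(\varphi^{\cF}_t)/t=E^{\mathrm{NA}}(u)$ from \cite{BBJ18} to force the increasing limit $\psi_t:=\lim_j\varphi^{\cF_{m_j}}_t$ to satisfy $E(\psi_t)=E(\varphi^{\cF}_t)$, hence $\psi_t=\varphi^{\cF}_t$; monotone $d_p$-convergence then finishes. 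Your argument bypasses all of this Berkovich machinery: you manufacture the lower envelope $\Lambda_j$ directly from the level-$m_j$ quantized test curves via Fekete superadditivity and the Legendre transform, and conclude by the sandwich plus order-monotonicity of $d_p$. This is more elementary and entirely self-contained within classical pluripotential theory; the paper's approach, by contrast, is more conceptual and sits naturally in the non-Archimedean framework already invoked elsewhere in \cite{BBJ18,BHJ17,BoJ18}.

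One small remark: the order-monotonicity you invoke ($u\le v\le w\Rightarrow d_p(u,v)\le d_p(u,w)$) is indeed standard, but if you want to avoid hunting for a precise reference you can prove it in one line: the weak geodesic $\gamma$ from $u$ to $v$ is a sub-barrier for the maximal geodesic $\eta$ from $u$ to $w$ (since $\gamma_1=v\le w$), so $\gamma_t\le\eta_t$ with equality at $t=0$, whence $0\le\dot\gamma_0\le\dot\eta_0$ pointwise and the $L^p(\mathrm{MA}(u))$-norms compare. Alternatively, note that your superadditivity argument already yields $\psi^{\cF_{m_j}}_\lambda\le\psi^{\cF_{m_{j+1}}}_\lambda$ (since $m_j\mid m_{j+1}$, products of generators at level $m_j$ regroup into products at level $m_{j+1}$), so $\varphi^{\cF_{m_j}}_t$ is itself increasing in $j$ and you can apply monotone $d_p$-convergence directly, as the paper does---but reached by your elementary route rather than through non-Archimedean energies.
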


\begin{proof}
We need some ingredients and terminologies from the non-Archimedean approach developed in \cite{BBJ18,BHJ17,BoJ18,BoJ18a}. We refer the reader to \emph{loc. cit.} for more details. 

Let $\mathrm{FS}$ be the Fubini--Study map defined in \cite{BoJ18}. Set
$$
u:=\mathrm{FS}(||\cdot||_\bullet)\text{ and }u^m:=\mathrm{FS}(||\cdot||_{m,\bullet}),
$$
where $||\cdot||_\bullet$ denotes the graded norm associated to $\cF$ and $||\cdot||_{m,\bullet}$ the graded norm associated to $\cF_m$ (cf. \cite[Section 3]{BoJ18}).
Then $u\in \cE^{1,\mathrm{NA}}(L)$ and $u^m\in\cH^{\mathrm{NA}}(L)$ are functions on the Berkovich space $X^{\mathrm{an}}$. Moreover it follows from the construction that $u^m$ is an increasing net converging to $u$ (in the sense of \cite{BoJ18}). So by \cite[Theorem 6.11]{BoJ18a} one has
$$
E^{\mathrm{NA}}(u)=\lim_{m\rightarrow\infty}E^{\mathrm{NA}}(u^m),
$$
where $E^{\mathrm{NA}}$ denote the non-Archimedean Monge--Amp\`ere functional defined in \cite[Section 7]{BHJ17}. Note that by \cite[Theorem 6.6 and Example 6.9]{BBJ18}, $\varphi^\cF_t$ is the unique maximal geodesic ray emanating from $0$ that is associated to $u$ and likewise for each $m\geq 1$, $\varphi^{\cF_m}_t$ is the unique maximal geodesic ray emanating from $0$ that is associated to $u^m$. Moreover by \cite[Definition 6.5]{BBJ18}, $\{\varphi^{\cF_m}_t\}_{m\geq1}$ forms an increasing net. Now we show that there is an increasing subsequence converging pointwise to $\varphi^\cF_t$ for each $t\geq0$. To see this, we extract an increasing subsequence $\{\varphi^{\cF_{m_j}}_t\}_{j\geq1}$ and put
$$
\psi_t:=\lim_{j\rightarrow\infty}\varphi^{\cF_{m_j}}_t=\sup_{j}\varphi^{\cF_{m_j}}_t.
$$
So in particular,
$$
\varphi^{\cF_{m_j}}_t\leq\psi_t\leq\varphi^\cF_t\text{ for any }t\geq0 \text{ and }j\geq1.
$$
Observe that by Lemma \ref{lem:varphi-F-t-linearly-growth} and \cite[Lemma 3.28]{DarvasSurvey}, $\{\varphi_t^{\cF_{m_j}}\}_{j\geq1}$ is an increasing $d_p$-bounded sequence for any fixed $p\geq1$ and $t\geq0$. So by \cite[Lemma 3.34]{DarvasSurvey} we have $\psi_t\in\cE^p(X,\omega)$.
Meanwhile, applying \cite[Corollary 6.7]{BBJ18} we find that
$$
E(\varphi^\cF_t)/t=E^{\mathrm{NA}}(u)=\lim_{j\rightarrow\infty}E^{\mathrm{NA}}(u^{m_j})=\lim_{j\rightarrow\infty}E(\varphi^{\cF_{m_j}}_t)/t\text{ for any}\ t>0,
$$
where $E$ denotes the classical Monge--Amp\`ere functional (see \eqref{eq:def-E}).
So by \cite[Corollary 3.39]{DarvasSurvey} this forces that
$$
E(\psi_t)=E(\varphi^\cF_t)\text{ for any }t\geq0
$$
and hence (by \cite[Proposition 3.43]{DarvasSurvey}) $d_1(\psi_t,\varphi^\cF_t)=0$. Then from \cite[Proposition 3.27]{DarvasSurvey} we deduce that
$$
\psi_t=\varphi^\cF_t\text{ for any }t\geq0.
$$
Thus for any $t\geq0$, as $j\rightarrow\infty$,
$$
\varphi^{\cF_{m_j}}_t\text{ increasingly converges to }\varphi^\cF_t.
$$
Finally by \cite[Lemma 4.34]{DarvasSurvey} again, we conclude that
$$
d_p(\varphi^{\cF_{m_j}}_t,\varphi^\cF_t)\rightarrow 0\text{ as }j\rightarrow\infty,
$$
as desired.
\end{proof}

So the proof of Theorem \ref{thm:S-p=d_p/t-filtration} is complete.

Take $p=1$,
then Theorem \ref{thm:S-p=d_p/t-filtration} immediately implies (this should be well-known to experts):
\begin{equation}
\label{eq:S=E}
    S(L,\cF)=\frac{E(\varphi^\cF_t)}{t}\ \text{ for all $t>0$},
\end{equation}
where $E(\cdot)$ is the Monge--Amp\`ere functional defined by
\begin{equation}
    \label{eq:def-E}
    E(\varphi):=\frac{1}{(n+1)V}\int_X\sum_{i=0}^n\omega^{n-i}\wedge(\omega+dd^c\varphi)^i.
\end{equation}
This functional is known to be linear along $\varphi^\cF_t$.

Now assume that $F$ is a prime divisor over $X$. Combining Proposition \ref{prop:H-p-increase} with Theorem \ref{thm:S-p=d_p/t-filtration} we conclude that
$$
\bigg(\frac{n+p}{n}\bigg)^{1/p}\frac{d_p(0,\varphi^F_t)}{t}\text{ is non-decreasing in $p$}.
$$
It would be interesting to know if this holds for geodeisc rays induced by general valuations. But it is easy to see that
this monotonicity is in general not valid for geodesic rays induced by filtrations. For instance consider $\varphi_t:=Ct$ for some constant $C>0$, which is induced by the shifted trivial filtration: $\cF^{\lambda}R_m:=R_m$ for $\lambda\leq Cm$ and $\cF^\lambda R_m:=\{0\}$ for $\lambda>Cm$. Then clearly $\varphi_t$ violates the above monotonicity.
So geodesic rays constructed from valuative data typically enjoy additional properties. See also \cite{DX20} for related discussions in this direction.

\begin{remark}
As studied by Darvas--Lu \cite{DL19}, the $d_p$-distance between two geodesic rays yields rich information in Mabuchi geometry. In our context, we may consider two linearly bounded filtrations of $R$, say $\cF$ and $\cF^\prime$, then it is expected that
\begin{equation}
\label{eq:d-p-of-two-filtrations}
    d_p(\cF,\cF^\prime)=\lim_{t\rightarrow\infty}\frac{d_p(\varphi_t^\cF,\varphi_t^{\cF^\prime})}{t}.
\end{equation}
Here $d_p(\cF,\cF^\prime)$ denotes the $d_p$-distance between filtrations (see \cite{BoE} for the definition.).
When $\cF^\prime$ is trivial, \eqref{eq:d-p-of-two-filtrations} reduces to Theorem \ref{thm:S-p=d_p/t-filtration}. 
However, proving \eqref{eq:d-p-of-two-filtrations} in full generality seems to be a difficult problem.
\end{remark}




\bibliography{ref.bib}
\bibliographystyle{abbrv}

\end{document}